\newcommand{\N}{\mathbb{N}}
\newcommand{\R}{\mathbb{R}}
\newcommand{\eps}{\varepsilon}
\newcommand{\Om}{\Omega}
\newcommand{\ds}{\displaystyle}
\DeclareMathOperator{\cp}{cap}
\newcommand{\mean}[1]{\,-\hskip-1.08em\int_{#1}} 
\def\HH{\mathcal{H}}
\def\XXint#1#2#3{{\setbox0=\hbox{$#1{#2#3}{\int}$ }
\vcenter{\hbox{$#2#3$ }}\kern-.6\wd0}}
\newcommand{\mathbbmm}[1]{\text{\usefont{U}{bbm}{m}{n}#1}}
\newcommand{\ind}{\mathbbmm{1}}
\newtheorem{proposition}{Proposition}[section]
\newtheorem{theorem}[proposition]{Theorem}
\newtheorem{corollary}[proposition]{Corollary}
\newtheorem{lemma}[proposition]{Lemma}
\theoremstyle{definition}
\newtheorem{definition}[proposition]{Definition}
\newtheorem{remark}[proposition]{Remark}
\newcommand{\beq}{\begin{equation}}
\newcommand{\eeq}{\end{equation}}
\newcommand{\ben}{\begin{enumerate}}
\newcommand{\een}{\end{enumerate}}
\newcommand{\bit}{\begin{itemize}}
\newcommand{\eit}{\end{itemize}}
\title{Regularity of the optimal sets for the second Dirichlet eigenvalue}
\author{Dario Mazzoleni, Baptiste Trey, Bozhidar Velichkov}
\address{Dario Mazzoleni: \newline \indent
Dipartimento di Matematica ``F. Casorati'',
Universit\`a di Pavia,\newline \indent
Via Ferrata 5, I--27100 Pavia, Italy,} 
\email{dario.mazzoleni@unipv.it}
\address{Baptiste Trey: \newline \indent
Universit\'e Grenoble Alpes, CNRS UMR 5582, Institut Fourier \newline \indent
100 rue des Math\'ematiques, F--38610 Gi\`eres, France
} 
\email{baptiste.trey@etu.univ-grenoble-alpes.fr}
\address {Bozhidar Velichkov: \newline \indent
Dipartimento di Matematica, Universit\`a di Pisa \newline \indent
Largo Bruno Pontecorvo, 5, 56127 Pisa - ITALY}
\email{bozhidar.velichkov@unipi.it}
\thanks{{\bf Acknowledgments.} 
D.M. has been partially supported by the INdAM-GNAMPA project 2019 ``Ottimizzazione spettrale non lineare''. B.V.  has been partially supported by the European Research Council (ERC), under the European Union’s Horizon 2020 research and innovation programme, through the project ERC VAREG - \it Variational approach to the regularity of the free boundaries \rm (grant agreement No. 853404).}
\subjclass[2010]{35R35, 49Q10, 47A75}
\begin{document}

\begin{abstract}
This paper is dedicated to the regularity of the optimal sets for the second eigenvalue of the Dirichlet Laplacian. Precisely, we prove that if the set $\Omega$ minimizes the functional 
$$\mathcal F_\Lambda(\Omega)=\lambda_2(\Omega)+\Lambda |\Omega|,$$
among all subsets of a smooth bounded open set $D\subset \R^d$, where $\lambda_2(\Omega)$ is the second eigenvalue of the Dirichlet Laplacian on $\Omega$ and $\Lambda>0$ is a fixed constant, then $\Omega$ is equivalent to the union of two disjoint open sets $\Omega_+$ and $\Omega_-$, which are $C^{1,\alpha}$-regular up to a (possibly empty) closed set of Hausdorff dimension at most $d-5$, contained in the one-phase free boundaries $D\cap \partial\Omega_+\setminus\partial\Omega_-$ and $D\cap\partial\Omega_-\setminus\partial\Omega_+$. 
\end{abstract}
\maketitle

\section{Introduction}\label{s:intro}

Given a real constant $\Lambda>0$ and an open set $\Omega\subset\R^d$, we define 
\begin{equation}\label{e:fun}
\mathcal F_\Lambda(\Omega)=\lambda_2(\Omega)+\Lambda|\Omega|\ ,
\end{equation}
where $|\Omega|$ is the Lebesgue measure of the set $\Omega$ and $\lambda_2(\Omega)$ is the second eigenvalue (counted with the due multiplicity) of the Laplace operator in $\Omega$, with Dirichlet boundary conditions on $\partial\Omega$. Precisely, we recall the following variational characterization of the second eigenvalue:
\begin{equation}\label{e:lambda_2}
\lambda_2(\Omega)=\min_{E_2\subset H^1_0(\Omega)}\max\Big\{\int_{\Omega}|\nabla u|^2\,dx\ :\ u\in E_2,\ \int_{\Omega}u^2\,dx=1\Big\},
\end{equation}
where the minimum is taken among all two-dimensional subspaces $E_2$ of the Sobolev space $H^1_0(\Omega)$, which is the closure, with respect to the $H^1$ norm, 
of the space $C^{\infty}_c(\Omega)$ of smooth functions compactly supported in $\Omega$.
\medskip

This paper is dedicated to the regularity of the sets that minimize $\mathcal F_\Lambda$. Our main result is the following.


\begin{theorem}\label{thm:main}
	Let $D\subset \R^d$ be an open  bounded set of class $C^{1,\beta}$, for some $\beta>0$, and let $\Lambda>0$ be a given constant. 
Let $\Omega\subset D$ be an open set that minimizes $\mathcal F_\Lambda$ in $D$, that is, 
\begin{equation}\label{e:min-pb-open}
\mathcal F_\Lambda(\Omega)\le \mathcal F_\Lambda(\widetilde\Omega)\quad\text{for every open set}\quad \widetilde\Omega\subset D.
\end{equation}
Then, there are two disjoint open sets $\Omega_+$ and $\Omega_-$, both contained in $\Omega$, such that 
$$\lambda_2(\Omega_+\cup\Omega_-)=\lambda_2(\Omega)\qquad \text{and}\qquad |\,\Omega\,\setminus\,(\Omega_+\cup\Omega_-)|=0.$$
Each of the boundaries $\partial \Omega_+$ and $\partial\Omega_-$ can be decomposed as the disjoint union of a regular and of a (possibly empty) singular part, namely
$$\partial \Omega_\pm=\text{\rm Reg}\,(\partial \Omega_\pm)\cup \text{\rm Sing}\,(\partial \Omega_\pm),$$
with the following properties:
	\begin{enumerate}[\quad\rm(i)]
		\item The regular set $\text{\rm Reg}(\partial \Omega_\pm)$ is an open subset of $\partial \Omega_\pm$, which is locally the graph of a $C^{1,\alpha}$ function, for some $\alpha>0$. Moreover, $\text{\rm Reg}(\partial \Omega_\pm)$ contains both the two-phase free boundary $\partial\Omega_+\cap\partial\Omega_-$ and the contact sets with the boundary of the box: $\partial\Omega_+\cap\partial D$ and $\ \partial\Omega_-\cap\partial D$.\medskip
		\item The singular set $\text{\rm Sing}(\partial \Omega_\pm)$ is a closed subset of $\partial \Omega_\pm$ and contains only one-phase points. Moreover, there exists a critical dimension $d^*\in\{5,6,7\}$ (see \cref{oss:crit}) such that 
		\begin{itemize}
			\item if $d<d^*$, then the singular set is empty,
			\item if $d=d^*$, then the singular set consists of a finite number of points,
			\item if $d>d^*$, then the singular set has Hausdorff dimension at most $d-d^*$.
		\end{itemize} 
	\end{enumerate}
\end{theorem}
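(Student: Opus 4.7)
The strategy is to reduce the scalar shape problem to a vectorial Bernoulli-type free boundary problem by decomposing $\Omega$ via the nodal domains of a second eigenfunction, and then to apply the now-standard one-phase and two-phase regularity theory to the pair of eigenfunctions $(u_+,u_-)$.

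First I would take any eigenfunction $u$ associated to $\lambda_2(\Omega)$. Since the first eigenfunction $u_1$ is positive and $u\perp u_1$ in $L^2$, the function $u$ changes sign; Courant's nodal domain theorem then forces at most two nodal components, so $\Omega_+:=\{u>0\}$ and $\Omega_-:=\{u<0\}$ are open and disjoint, and $u|_{\Omega_\pm}$ is a first Dirichlet eigenfunction with $\lambda_1(\Omega_\pm)=\lambda_2(\Omega)$. Setting $\widetilde\Omega:=\Omega_+\cup\Omega_-\subset\Omega$, the bounds $\lambda_2(\widetilde\Omega)\le\lambda_2(\Omega)$ and $|\widetilde\Omega|\le|\Omega|$ together with the minimality \eqref{e:min-pb-open} force $|\Omega\setminus\widetilde\Omega|=0$. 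Writing $u=u_+-u_-$ with $\{u_\pm>0\}=\Omega_\pm$ yields the desired decomposition.

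Next, combining \eqref{e:min-pb-open} with the Rayleigh bound $\lambda_1(\{v_\pm>0\})\le\int|\nabla v_\pm|^2/\int v_\pm^2$ for any admissible pair $(v_+,v_-)$ of nonnegative functions with disjoint supports, I would derive that $(u_+,u_-)$ is a quasi-minimizer of a vectorial one-phase functional of the form
$$J(v_+,v_-)=\int_D|\nabla v_+|^2\,dx+\int_D|\nabla v_-|^2\,dx+\Lambda\,\big|\{v_+>0\}\cup\{v_->0\}\big|,$$
modulo $L^2$-penalties that become lower order under rescaling; a scaling/balancing argument, exploiting the equality $\lambda_1(\Omega_+)=\lambda_1(\Omega_-)$, produces the admissible competitors. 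Once placed in the vectorial Bernoulli framework, the analysis follows the standard ladder: Lipschitz continuity of $u_\pm$ up to $\partial D$ via harmonic replacement, non-degeneracy and openness of $\Omega_\pm$, density estimates and local finite perimeter of $\partial\Omega_\pm$, and finally blow-up analysis at each point $x_0\in D\cap(\partial\Omega_+\cup\partial\Omega_-)$. The rescalings $u_{\pm,r}(x):=r^{-1}u_\pm(x_0+rx)$ converge to $1$-homogeneous minimizers of the \emph{vectorial} Bernoulli functional, the eigenvalue source $\lambda_2 u_\pm$ being $O(r)$ and hence negligible in the limit.

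At a two-phase point $x_0\in\partial\Omega_+\cap\partial\Omega_-$ the blow-up pair is a two-phase Bernoulli solution, and the two-phase regularity theorem (along the lines of De Silva-Ferrari-Salsa or Spolaor-Velichkov) delivers $C^{1,\alpha}$-regularity of both $\partial\Omega_\pm$ near $x_0$; the contact set $\partial\Omega_\pm\cap\partial D$ is included in the regular part by flattening $\partial D$ via its $C^{1,\beta}$ parametrization and combining interior regularity with boundary Harnack-type estimates. At a one-phase point, flatness implies $C^{1,\alpha}$ by classical Alt-Caffarelli theory, and the singular set is exactly the non-flat one-phase set; Weiss monotonicity, Federer dimension reduction, and the critical dimension $d^*\in\{5,6,7\}$ for the Alt-Caffarelli functional (Caffarelli-Jerison-Kenig, De Silva-Jerison, Jerison-Savin, Edelen-Engelstein) then yield the Hausdorff dimension bound in (ii). The principal obstacle will be the clean derivation of the vectorial quasi-minimality in step two: one must show that admissible variations preserving the disjointness $u_+u_-=0$ produce open-set competitors in \eqref{e:min-pb-open} and control the eigenvalue error uniformly under blow-up. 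A secondary difficulty lies in the transition region where the two-phase free boundary meets $\partial D$, which will require combining boundary regularity for $\partial D$ with the two-phase analysis.
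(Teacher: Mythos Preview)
Your broad outline (reduce to a free-boundary problem for $(u_+,u_-)$, then run the one-phase/two-phase dichotomy via blow-ups) matches the paper's, but two steps hide genuine difficulties that your sketch does not resolve.

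First, the claim that a second eigenfunction $u$ must change sign is false in general: the optimal set $\Omega$ may be disconnected with $\lambda_1$ of two components equal, in which case one can (and must) choose two nonnegative eigenfunctions with disjoint supports. The paper handles this case separately (see the dichotomy in Proposition~\ref{p:equivalence1} and the case analysis in Lemma~\ref{l:lipschitz}); your appeal to Courant's theorem on the a~priori non-smooth set $\Omega$ is also not justified.

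Second, and more seriously, the functional governing $(u_+,u_-)$ is $J_\infty(v_+,v_-)=\max\{\int|\nabla v_+|^2,\int|\nabla v_-|^2\}$, not the sum you wrote. The max is not differentiable, so one cannot simply read off a vectorial Bernoulli quasi-minimality ``modulo lower-order terms''. The paper's route is to approximate $J_\infty$ by the smooth functionals $J_p=(X^p+Y^p)^{1/p}$, derive a first variation $a_+\delta\mathcal R(u_+)+a_-\delta\mathcal R(u_-)+\delta\text{Vol}=0$ with $a_++a_-=1$ in the limit $p\to\infty$, and then prove the non-obvious fact that \emph{both} $a_+>0$ and $a_->0$ (Theorem~\ref{t:main_intext}, Step~2). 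Only after this non-degeneracy is established can one rescale to $v=\sqrt{a_+}\,u_+-\sqrt{a_-}\,u_-$ and apply the two-phase Bernoulli regularity of \cite{despve}. Your ``scaling/balancing argument exploiting $\lambda_1(\Omega_+)=\lambda_1(\Omega_-)$'' does not supply this: that equality alone does not prevent one weight from vanishing (think of the degenerate case $a_+=1$, $a_-=0$, which the paper must rule out by a separate contradiction argument using the stationarity condition on blow-ups). Finally, the ``transition region'' you flag as a secondary difficulty is in fact shown to be \emph{empty}: the paper uses a three-phase Caffarelli--Jerison--Kenig monotonicity formula (Proposition~\ref{p:triple}) to exclude two-phase points on $\partial D$, so near $\partial D$ the problem is purely one-phase and \cite{rtv} applies directly---no boundary-Harnack machinery is needed there.
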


\begin{remark}\label{oss:crit}
The critical dimension $d^\ast$ is the lowest dimension in which there exist minimizing one-phase free boundaries with singularities. It is known that $d^*$ is $5$, $6$ or $7$ (see~\cite{jesa} and the references therein).
\end{remark}	
\begin{remark}
The existence of an open set $\Omega$ that minimizes $\mathcal F_\Lambda$ in $D$ was already proved in \cite[Corollary~5.11]{bucve}.
\end{remark}

\subsection{Optimal sets for the eigenvalues of the Dirichlet Laplacian: an overview}\label{sub:history} 
The optimization problems for functionals involving the eigenvalues of an elliptic operator and the volume (the Lebesgue measure) allow to achieve a better understanding on the interaction between the geometry (the shape) of the domains in $\R^d$ and their spectrum. In the particular case when the elliptic operator is the Laplacian with Dirichlet boundary conditions, these variational problems have a rich, century-long history. We will briefly recall the main results concerning the existence and the regularity of optimal sets and we will refer to the survey papers \cite{bu} and \cite{henrot} for a more detailed introduction to the topic.

\subsubsection{Functionals involving only the first eigenvalue}
For the principal eigenvalue $\lambda_1$, the analogous of \eqref{e:fun} is the variational problem
\begin{equation}\label{eq:main007}
\min\big\{\lambda_1(A)+\Lambda|A| : A\subset D\big\}.
\end{equation}
We first recall that the classical Faber-Krahn inequality implies that if $\Lambda$ is big enough, then balls are the only (up to translation in $D$) solutions. 
On the other hand, if $\Lambda$ is small, then the existence of a minimizer of \eqref{eq:main007} in the class of quasi-open sets can be easily proved (see \cref{s:existenceqo}), but the optimal shapes $\Omega\subset D$ are in general not explicit. In this case, the regularity of the free boundary $\partial\Omega\cap D$ (the part contained in the box $D$) was obtained by Brian\c{c}on and Lamboley in~\cite{brla}.  In fact, by the variational characterization of $\lambda_1(\Omega)$, the problem \eqref{eq:main007} is equivalent to the following variational problem involving functions and not sets
$$\min\left\{\int_D|\nabla u|^2\,dx+\Lambda|\{u>0\}|\ :\ u\in H^1_0(D),\ \int_Du^2\,dx=1\right\}.$$
If $u$ is a minimizer and $\xi\in C^\infty_c(D;\R^d)$ is a smooth vector field, then the function
$$t\mapsto \int_D|\nabla u_t|^2\,dx+\Lambda|\{u_t>0\}|\qquad\text{where}\qquad u_t(x):=u(x+t\xi(x)),$$ 
is differentiable and has minimum at $t=0$. The associated first order optimality condition gives that, in some suitable sense, $u$ is a solution of the following (one-phase) free boundary problem 
\begin{equation}\label{e:over_bvp}
-\Delta u=\lambda_1(\Omega)u\quad\text{in}\quad\Omega,\qquad |\nabla u|=\sqrt{\Lambda}\quad\text{on}\quad \partial\Omega\cap D,\qquad \Omega=\{u>0\}\subset D,
\end{equation}
for which one can apply the techniques developed by Alt and Caffarelli in~\cite{alca} for the one-phase Bernoulli problem
\begin{equation*}
-\Delta u=0\quad\text{in}\quad\{u>0\},\qquad |\nabla u|=\sqrt{\Lambda}\quad\text{on}\quad \partial\{u>0\}\cap D,
\end{equation*}
obtained from the minimization (with suitable Dirichlet boundary conditions on $\partial D$) of the functional 
$$\int_D|\nabla u|^2\,dx+\Lambda|\{u>0\}\cap D|.$$
Finally, we notice that, for solutions $\Omega$ of \eqref{eq:main007}, the regularity of the full boundary $\partial\Omega$, including the part touching $\partial D$, was obtained was obtained recently in \cite[Theorem~1.2]{rtv} (by an argument relying on \cite{savin}) and in \cite{stv} (by the epiperimetric inequality from \cite{spve}) .

\subsubsection{Functionals involving higher eigenvalues} For what concerns functionals $\mathcal F$ depending on the higher eigenvalues of the Dirichlet Laplacian, the existence of minimizers is known only in the class of quasi-open sets (the definition of a quasi-open set is recalled in \cref{sub:intro-quasi-open}). In this class of domains, Buttazzo and Dal Maso \cite{budm} proved the existence of optimal sets for general shape optimization problems
\begin{equation}\label{eq:main0077}
\min\big\{\mathcal F(A)+\Lambda|A| : A\subset D,\ A\ \text{quasi-open}\big\},
\end{equation}
involving functionals $\mathcal F$ of the form 
$$\mathcal F(\Omega)=F\big(\lambda_1(\Omega),\dots,\lambda_k(\Omega)\big),$$
for which the function $F:\R^k\to\R$ satisfies only some mild semicontinuity and monotonicity assumptions. 

The regularity of the optimal sets in this more general situation is still to be completly understood even for the simplest model case 
$$F\big(\lambda_1,\dots,\lambda_k\big)=\lambda_k.$$ 
The main difficulty is in the fact that the higher eigenvalues $\lambda_k(\Omega)$ of the Dirichlet Laplacian are variationally characterized  by the following min-max principle
$$\lambda_k(A):=\min_{E_k\subset H^1_0(A)}\max_{u\in E_k\setminus \{0\}}\frac{\ds\int_A|\nabla u|^2\,dx}{\ds\int_Au^2\,dx}\ ,\qquad k\in\N,$$
where the minimum is taken over all $k$-dimensional linear subspaces $E_k$ of $H^1_0(A)$. One consequence of this min-max formulation is that, for $k\ge 2$, the functional $\Omega\mapsto\lambda_k(\Omega)$ is not differentiable with respect to variations of the domain $\Omega$ along smooth vector fields (see for instance \cite{henrot-book}), which in particular means that one cannot write an overdetermined boundary value problem as \eqref{e:over_bvp} for just one of the associated eigenfunctions $u_k\in E_k$.
\smallskip

Several results were obtained recently for functionals involving not only higher eigenvalues but also the principal one $\lambda_1(\Omega)$. In fact, for this type of {\it non-degenerate} functionals the regularity of the free boundary $\partial\Omega\cap D$ of an optimal set $\Omega$ was recently proved in \cite{krli} and \cite{mtv} (see also \cite{trey2} for the case of  more general operators). The main model example of such a functional is 
$$\ds F\big(\lambda_1,\dots,\lambda_k\big)=\sum_{j=1}^k\lambda_j,$$
and the crucial observation is that the vector-valued function $U=(u_1,\dots,u_k):D\to\R^k$, whose components are the first $k$ eigenfunctions on $\Omega$, is a solution of a free boundary problem:
\begin{equation}\label{e:over_bvp_vect}
-\Delta u_j=\lambda_j(\Omega)u_j\quad\text{in}\quad\Omega\quad\text{for}\quad j=1,\dots,k\,;\qquad \big|\nabla |U|\big|=\sqrt{\Lambda}\quad\text{on}\quad \partial\Omega\cap D,\qquad \Omega=\{|U|>0\}\subset D,
\end{equation}
which is closely related to the vectorial Bernoulli problem 
\begin{equation}
-\Delta U=0\quad\text{in}\quad\{|U|>0\},\qquad \big|\nabla |U|\big|=\sqrt{\Lambda}\quad\text{on}\quad \partial\{|U|>0\}\cap D,
\end{equation}
obtained from the minimization of the functional 
$$\int_D|\nabla U|^2\,dx+\Lambda\big|\{|U|>0\}\cap D\big|,$$
and which was studied in \cite{csy}, \cite{mtv}, \cite{spve}, \cite{desilva-tortone}, and \cite{mtv2}. 
\smallskip

For what concerns the optimal sets for {\it degenerate} functionals of the form $\mathcal F(\Omega)=\lambda_k(\Omega)$, the only available regularity result for $k\ge 2$ was obtained by Kriventsov and Lin in \cite{krli2}, where they prove both the existence of an open optimal set $\Omega$ and the $C^{1,\alpha}$-regularity of the flat part of the free boundary. The full regularity of the optimal sets is still not completely understood, as $\partial\Omega$ might contain cusp-like singularities (branching points), which a priori might  be a large set of the same dimension as the free boundary. \smallskip

The aim of the present paper is to give a complete description, including the branching points as well as the contact points with $\partial D$, of the boundary of the optimal sets for the second eigenvalue ($k=2$).

\subsubsection{Multiphase shape optimization problems} 
The variational minimization problem
\begin{equation}\label{e:sop:lambda2:intro}
\min\Big\{\lambda_2(\Omega)+\Lambda|\Omega|\ :\ \Omega\ \text{quasi-open},\ \Omega\subset D\Big\}
\end{equation} 
is related to a class of spectral optimization problems involving multiple disjoint sets, the so-called multiphase shape optimization problems. Indeed, \eqref{e:sop:lambda2:intro} is equivalent to the variational problem 
\begin{equation}\label{e:multi2}
\min\Big\{\max\big\{\lambda_1(\Omega_1);\lambda_1(\Omega_2)\big\}+\Lambda|\Omega_1\cup\Omega_2|\ :\ \ \Omega_1\ \text{ and }\ \Omega_2\ \text{ are disjoint quasi-open subsets of }\ D\Big\}.
\end{equation}
We notice that this multiphase version of \eqref{e:sop:lambda2:intro} was already exploited in \cite{bucve} in order to prove the existence of open optimal sets for the functional $\mathcal F_\Lambda=\lambda_2+\Lambda|\cdot|$ in $D$. In the present paper, we will use an equivalent free boundary version (see \cref{s:equivalencefb}).

The study of variational problems for functionals of the form 
$$\mathcal F(\Omega_1,\Omega_2,\dots,\Omega_N)=F\big(\lambda_1(\Omega_1),\dots,\lambda_1(\Omega_N)\big)$$
was initiated in \cite{bucve} and was then continued in \cite{benve} and \cite{stv}, where it was proved that if $d=2$ and if the $N$-uple  $\Omega_1,\dots,\Omega_N$ is a solution of 
\begin{equation}\label{e:multi}
\min\left\{\sum_{j=1}^N\Big(\lambda_1(\Omega_j)+\Lambda|\Omega_j|\Big)\ :\ \ \Omega_1,\dots,\Omega_N\ \text{ are disjoint quasi-open subsets of }\ D\right\},
\end{equation}
then each of the sets $\Omega_j$ has a $C^{1,\alpha}$ regular boundary. This result was recently extended to any dimension $d\ge 2$ in \cite{despve}. As in the one-phase $\mathcal F(\Omega)=\lambda_1(\Omega)$ and the vectorial $\ds\mathcal F(\Omega)=\sum_{j=1}^k\lambda_j(\Omega)$ problems, the crucial observation is that \eqref{e:multi} can be written (at least locally) as a minimization problem involving a single function, which changes sign. Precisely, in \cite{bucve} it was shown that one can reduce the analysis to the case of only two domains ($N=2$). Then, in \cite{stv} it was proved that if $u_1$ and $u_2$ are the first eigenfunctions of $\Omega_1$ and $\Omega_2$, then the function $u:=u_1-u_2$ is an almost-minimizer of the functional 
$$\int_D|\nabla u|^2\,dx+\Lambda\big|\{u\neq0\}\cap D\big|.$$
This allowed to prove the regularity of the free boundary for almost-minimizers in dimension two (see \cite{stv}) by means of the epiperimetric inequality from \cite{spve}. In higher dimension, the analysis was concluded in \cite{despve}, where was proved the regularity of both free boundaries 
$$\partial\{u>0\}\cap D\quad\text{and} \quad \partial\{u<0\}\cap D$$ 
in a neighborhood of $\partial\{u>0\}\cap\partial\{u<0\}$, for functions 
$u$ that solve a PDE in $\{u\neq 0\}$ and satisfy the following conditions on the boundary $\partial\{u\neq0\}\cap D$ in viscosity sense
\begin{equation}\label{e:opt_con_intro}
\begin{cases}
|\nabla u_+|=\alpha_+>0\quad\text{on}\quad \partial\{u>0\}\setminus \partial\{u<0\}\cap D,\\
|\nabla u_-|=\alpha_->0\quad\text{on}\quad \partial\{u<0\}\setminus \partial\{u>0\}\cap D,\\
|\nabla u_\pm|\ge \alpha_\pm\quad\text{and}\quad |\nabla u_+|^2-|\nabla u_-|^2=\alpha_+^2-\alpha_-^2\quad\text{on}\quad \partial\{u>0\}\cap \partial\{u<0\}\cap D.
\end{cases}
\end{equation}
As it can be easily seen from the analysis in \cite{stv}, this result applies directly to the multiphase problem \eqref{e:multi} by taking the constants $\alpha_+$ and $\alpha_-$ to be equal to $\sqrt{\Lambda}$. Unfortunately, the regularity theorem from \cite{despve} cannot be directly applied to \eqref{e:multi2} and \eqref{e:fun}. In fact, in the present work, a key point in the proof of \cref{thm:main} is to show that if $\Omega$ is an optimal set for \eqref{e:fun}, then there is a Lipschitz continuous (on the whole $\R^d$) second eigenfunction $u\in H^1_0(\Omega)$ that satisfies \eqref{e:opt_con_intro} in viscosity sense for some strictly positive constants $\alpha_+$ and $\alpha_-$. We will discuss the strategy of the proof in \cref{sub:plan}.


%
%
%

\subsection{Optimal quasi-open sets}\label{sub:intro-quasi-open}
The variational minimization problem \eqref{e:fun} is usually stated in the wider class of the so-called \it quasi-open \rm sets, as 
\begin{equation}\label{e:minFLambdaqo}
\min\Big\{\mathcal F_\Lambda(\Omega)\ :\ \Omega\subset D\;,\ \Omega\ \text{ quasi-open}\Big\}.
\end{equation}
As explained in the previous section, the main reason is that a general theorem by Buttazzo and Dal Maso \cite{budm} provides the existence of optimal sets in this class for a large variety of functionals, which includes $\mathcal F_\Lambda$. Our regularity result holds also for minimizers in this class of sets. Before we state the result in this setting (\cref{thm:main2}), we briefly recall the main definitions in this context (for more details, we refer to the books \cite{evans-gariepy, hepi, bubu}). 
\subsubsection{Capacity}
The capacity of a set $E\subset \R^d$ is defined as
\begin{equation}\label{e:capacity}
\cp(E)=\inf\Big\{\int_{\R^d}\big(|\nabla u|^2+u^2\big)\,dx\ :\ u\in H^1(\R^d),\ u\ge 1\ \text{ in a neighborhood of }\ E\Big\}.
\end{equation}
It is well-known (see for instance \cite{evans-gariepy}) that any function $u\in H^1(\R^d)$, which by definition is defined almost-eveywhere in the sense of the Lebesgue measure, is also defined \it quasi-everywhere \rm on $\R^d$ in the following sense: there is a set $E_u\subset \R^d$ such that $\cp(E_u)=0$ and the limit
$$\lim_{r\to0}\mean{B_r(x_0)}{u(x)\,dx}\quad\text{exists for every}\quad x_0\in\R^d\setminus E_u.$$  
In particular, this allows to define $u$ pointwise everywhere on $\R^d\setminus E_u$ as
\begin{equation}\label{e:pw_def}
u(x_0):=\lim_{r\to0}\mean{B_r(x_0)}{u(x)\,dx}.
\end{equation}
Notice that the definition does not depend on the choice of respresentative of $u$ in $H^1(\R^d)$.

\subsubsection{Quasi-open sets and Sobolev spaces}
For every measurable set $\Omega\subset\R^d$ we define the space $H^1_0(\Omega)$ as 
$$H^1_0(\Omega)=\Big\{u\in H^1(\R^d)\ :\ \cp (\{u\neq 0\}\setminus \Omega)=0\Big\}.$$
When $\Omega$ is open, $H^1_0(\Omega)$ is precisely the closure of $C^\infty_c(\Omega)$ with respect to the $H^1$ norm (see for instance \cite{hepi}). When $\Omega$ is bounded, the embedding of $H^1_0(\Omega)$ in $L^2(\Omega)$ is compact.

We say that $\Omega$ is a quasi-open set if there is a function $u\in H^1(\R^d)$ satisfying \eqref{e:pw_def} outside a set of zero capacity and such that $\Omega=\{u>0\}$ up to a set of zero capacity; in particular, for every $u\in H^1(\R^d)$, the set $\Omega=\{u\neq 0\}$ is quasi-open and $u\in H^1_0(\Omega)$. 

Notice that a quasi-open set $\Omega$ is defined up to a set of zero capacity and that every open set is also quasi-open. Moreover, if $E$ is any subset of $\R^d$, then there is a unique (up to a set of zero capacity) quasi-open set  $\Omega$ such that 
$\cp(\Omega\setminus E)=0$ and $H^1_0(E)=H^1_0(\Omega)$. In other words, when we write $H^1_0(\Omega)$, we can always assume that $\Omega$ is quasi-open. 

\subsubsection{Spectrum of the Dirichlet Laplacian on quasi-open sets} Let $\Omega$  be a bounded quasi-open set in $\R^d$ and let $f\in L^2(\Omega)$. We say that $u\in H^1_0(\Omega)$ is a solution to 
$$-\Delta u=f\quad\text{in}\quad \Omega,$$
if for every $\varphi\in  H^1_0(\Omega)$, we have 
$$\int_{\Omega}\nabla u\cdot\nabla\varphi\,dx=\int_\Omega \varphi f\,dx.$$
The operator $\mathcal R_\Omega:L^2(\Omega)\to L^2(\Omega)$, that associates to each $f\in L^2(\Omega)$ the unique solution $u$ of the above equation, is linear, positive definite, compact and self-adjoint. Thus, its spectrum is discrete and made by eigenvalues that can be ordered in an infinitesimal and monotone decreasing sequence of positive real numbers. By definition, their inverse are the eigenvalues of the Dirichlet Laplacian on $\Omega$ and are denoted by $\lambda_k(\Omega)$, $k\in \N$,
$$0<\lambda_1(\Omega)\le \lambda_2(\Omega)\le\dots\le \lambda_k(\Omega)\le\dots$$
Moreover, there is a sequence of orthonormal (in $L^2(\Omega)$) eigenfunctions  $u_k\in H^1_0(\Omega)$, $k\in \N$, satisfying
$$-\Delta u_k=\lambda_k(\Omega)\,u_k\quad\text{in}\quad \Omega\,,\qquad \int_\Omega u_k^2\,dx=1.$$
Finally, we recall that for every $k\ge 1$, the eigenvalue $\lambda_k$ of the Dirichlet Laplacian can be obtained through the following min-max principle, 
\begin{equation}\label{e:var_lambda_k}
\lambda_k(\Omega):=\min_{E_k\subset H^1_0(\Omega)}\max_{u\in E_k\setminus \{0\}}\frac{\ds\int_\Omega|\nabla u|^2\,dx}{\ds\int_\Omega u^2\,dx},
\end{equation} 
where the minimum is taken over all $k$-dimensional linear subspaces $E_k$ of $H^1_0(\Omega)$.

\subsubsection{Existence of optimal quasi-open sets for~\eqref{e:minFLambdaqo}}\label{s:existenceqo}
This result follows from the Buttazzo-Dal Maso Theorem~\cite[Theorem~2.5]{budm}, but we provide a direct proof which is more suitable for our aims.
Let $\Omega_n$ be a minimizing sequence of quasi-open sets for $\mathcal F_\Lambda$ in $D$, that is 
$$\inf\Big\{\mathcal F_\Lambda(\Omega)\ :\ \Omega\subset D\ \text{quasi-open}\Big\}=\lim_{n\to\infty}\mathcal F_\Lambda(\Omega_n).$$
By the definition of $\lambda_2(\Omega_n)$, there are functions $u_n$ and $v_n$ in $H^1_0(\Omega_n)$ such that 
$$\int_{D}|\nabla u_n|^2\,dx= \lambda_2(\Omega_n)\,,\quad \int_{D}|\nabla v_n|^2\,dx\le \lambda_2(\Omega_n)\,,\quad\int_{D} u_n^2\,dx=\int_{D} v_n^2\,dx=1\quad\text{and}\quad  \int_{D} u_nv_n\,dx=0.$$  
Moreover, we can assume that $\Omega_n=\{u_n^2+v_n^2>0\}.$ Since the sequences $u_n$ and $v_n$ are uniformly bounded in $H^1_0(D)$, up to a subsequence, we can assume that 
$u_n$ (resp.\,$v_n$) converges to a function $u_\infty$ (resp.\,$v_\infty$) weakly in $H^1_0(D)$, strongly in $L^2(D)$ and pointwise almost everywhere. Now, by the semicontinuity of the $H^1$ norm, we have 
\[
\begin{split}
\int_{D}|\nabla u_\infty|^2\,dx\le \liminf_{n\to+\infty}\lambda_2(\Omega_n)\,,\quad \int_{D}|\nabla v_\infty|^2\,dx\le \liminf_{n\to+\infty}\lambda_2(\Omega_n)\,,\\
\int_{D} u_\infty^2\,dx=\int_{D} v_\infty^2\,dx=1\quad\text{and}\quad  \int_{D} u_\infty v_\infty\,dx=0,
\end{split}
\]
where $\Omega_\infty$ is the set $\{u_\infty^2+v_\infty^2>0\}$. Thus, 
$$\lambda_2(\Omega_\infty)\le \liminf_{n\to\infty}\lambda_2(\Omega_n).$$
On the other hand, the pointwise convergence of $u_n$ and $v_n$ gives that 
$$\ind_{\Omega_\infty}\le \liminf_{n\to\infty}\ind_{\Omega_n},$$
and by the Fatou's Lemma, we get 
$$|\Omega_\infty|\le \liminf_{n\to\infty}|\Omega_n|.$$
Thus, we obtain
$$\mathcal F_\Lambda(\Omega_\infty)\le \liminf_{n\to\infty}\mathcal F_\Lambda(\Omega_n)\le \inf\Big\{\mathcal F_\Lambda(\Omega)\ :\ \Omega\subset D\ \text{quasi-open}\Big\},$$
which proves that $\Omega_\infty$ is an optimal quasi-open set.

\subsubsection{Regularity of the optimal quasi-open set}
A regularity result, analogous to \cref{thm:main}, holds also for the minimizers of $\mathcal F_\Lambda$ among quasi-open sets. In fact, the two results are equivalent (see \cref{subsub:equivalence}).
 
 \begin{theorem}\label{thm:main2}
 	Let $D\subset \R^d$ be an open  bounded set of class $C^{1,\beta}$ for some $\beta>0$, and let $\Lambda>0$ be a given constant. 
 	Let $\Omega\subset D$ be a quasi-open set that minimizes $\mathcal F_\Lambda$ in $D$, that is, 
 \begin{equation}\label{e:min-pb-quasi-open}
 \mathcal F_\Lambda(\Omega)\le \mathcal F_\Lambda(\widetilde\Omega)\quad\text{for every quasi-open set}\quad \widetilde\Omega\subset D.
 \end{equation}
 	Then, there are two disjoint open sets $\Omega_+$ and $\Omega_-$ such that: 
 	$$\cp\big((\Omega_+\cup\Omega_-)\setminus\Omega\big)=0\ ,\qquad\lambda_2(\Omega_+\cup\Omega_-)=\lambda_2(\Omega)\qquad \text{and}\qquad |\Omega\setminus (\Omega_+\cup\Omega_-)|=0.$$
 The boundaries $\partial \Omega_+$ and $\partial\Omega_-$ can be decomposed as the disjoint union of a regular and a singular part
 	$$\partial \Omega_\pm=\text{\rm Reg}\,(\partial \Omega_\pm)\cup \text{\rm Sing}\,(\partial \Omega_\pm),$$
 for which the claims {\rm (i)} and {\rm (ii)} of \cref{thm:main} hold.
 \end{theorem}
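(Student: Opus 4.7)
My plan is to deduce \cref{thm:main2} from \cref{thm:main} by producing, from the given quasi-open minimizer $\Omega$, an open minimizer $\Omega_+\cup\Omega_-$ contained in $\Omega$ (up to capacity) and with the same value of $\mathcal F_\Lambda$. The reduction passes through the multiphase reformulation \eqref{e:multi2} already alluded to in \cref{sub:history}.

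\textbf{Step 1: Reduction to a two-phase configuration.} Let $u_1,u_2\in H^1_0(\Omega)$ be $L^2$-orthonormal eigenfunctions corresponding to $\lambda_1(\Omega)\le\lambda_2(\Omega)$. By a standard Courant-type argument (choose $\alpha,\beta\in\R$ so that the combination $w=\alpha u_1+\beta u_2$ is orthogonal, in $L^2$, to a prescribed eigenfunction on either $\{w>0\}$ or $\{w<0\}$), one can select $w$ in the span of $u_1,u_2$ so that both $\omega_+:=\{w>0\}$ and $\omega_-:=\{w<0\}$ are non-trivial quasi-open sets and
$$\lambda_1(\omega_+)\le \lambda_2(\Omega),\qquad \lambda_1(\omega_-)\le \lambda_2(\Omega).$$
Using the first eigenfunctions of $\omega_\pm$ as a two-dimensional test space gives $\lambda_2(\omega_+\cup\omega_-)\le\max\{\lambda_1(\omega_+),\lambda_1(\omega_-)\}\le\lambda_2(\Omega)$. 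Since $\omega_+\cup\omega_-\subset\Omega$ up to capacity, we obtain
$$\mathcal F_\Lambda(\omega_+\cup\omega_-)\le \lambda_2(\Omega)+\Lambda|\omega_+\cup\omega_-|\le \mathcal F_\Lambda(\Omega),$$
so $\omega_+\cup\omega_-$ is itself a minimizer and all the inequalities are equalities; in particular $|\Omega\setminus(\omega_+\cup\omega_-)|=0$ and $\lambda_2(\omega_+\cup\omega_-)=\lambda_2(\Omega)$.

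\textbf{Step 2: Opening the phases.} The pair $(\omega_+,\omega_-)$ is a minimizer of the multiphase functional \eqref{e:multi2} with common value $\lambda:=\lambda_2(\Omega)$. In particular, each $\omega_\pm$ is a minimizer, in the class of quasi-open sets, of $\lambda_1(\cdot)+\Lambda|\cdot|$ constrained to $D\setminus\omega_\mp$; equivalently, the first eigenfunctions $u_\pm\in H^1_0(\omega_\pm)$, suitably rescaled, satisfy the PDE $-\Delta u_\pm=\lambda\, u_\pm$ in their support and are almost-minimizers of the Alt--Caffarelli functional $\int|\nabla v|^2+\Lambda|\{v>0\}|$ after absorbing the bounded lower-order term $\lambda u_\pm^2$. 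The Lipschitz regularity theory for such almost-minimizers (as in \cite{brla} for the one-phase case, extended to almost-minimizers) then gives that $u_\pm$ admits a Lipschitz representative on the whole of $\R^d$ (extended by zero outside $\omega_\pm$). Setting
$$\Omega_+:=\{u_+>0\},\qquad \Omega_-:=\{u_->0\},$$
we obtain two disjoint \emph{open} sets, with $\cp(\omega_\pm\setminus\Omega_\pm)=0$, hence $\cp\big((\Omega_+\cup\Omega_-)\setminus\Omega\big)=0$, $\lambda_2(\Omega_+\cup\Omega_-)=\lambda_2(\omega_+\cup\omega_-)=\lambda_2(\Omega)$, and $|\Omega\setminus(\Omega_+\cup\Omega_-)|=0$.

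\textbf{Step 3: Applying \cref{thm:main}.} Since every open set is quasi-open, the infimum of $\mathcal F_\Lambda$ over quasi-open subsets of $D$ is at most the infimum over open subsets; the reverse inequality is a consequence of the existence of an open minimizer proved in \cite[Corollary 5.11]{bucve}. Hence the two infima coincide, and $\Omega_+\cup\Omega_-$, constructed above, is an open minimizer of $\mathcal F_\Lambda$ in the sense of \eqref{e:min-pb-open}. \cref{thm:main} applied to $\Omega_+\cup\Omega_-$ yields the decomposition into regular and singular parts with the announced dimension estimates, proving \cref{thm:main2}.

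\textbf{Main obstacles.} The delicate point is Step 2: the two-phase interaction on $\partial\omega_+\cap\partial\omega_-$ means that the Lipschitz estimate for $u_\pm$ cannot be read off directly from the scalar one-phase theory, and one must argue that the presence of the other phase only improves the situation (as in \cite{despve}), or equivalently, work with the scalar function $w=u_+-u_-$ and use the almost-minimality of $w$ for the functional $\int|\nabla v|^2+\Lambda|\{v\ne 0\}|$ to obtain Lipschitz continuity across the two-phase interface. A secondary technical point is ensuring the Courant-type selection of $w$ in Step 1 when $\lambda_1(\Omega)=\lambda_2(\Omega)$, which requires a continuity/rotation argument in the two-dimensional eigenspace.
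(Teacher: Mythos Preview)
Your overall strategy---produce an open minimizer contained (up to capacity) in the quasi-open one, then apply \cref{thm:main}---is exactly the paper's route (\cref{subsub:equivalence}). The paper, however, executes it in one line: \cite[Corollary~5.11]{bucve} furnishes directly, for any quasi-open minimizer $\Omega$, an open minimizer $\Omega_o$ with $\Omega_o\subset\Omega$ up to capacity; one then applies \cref{thm:main} to $\Omega_o$, and the sets $\Omega_\pm$ it returns satisfy all the conclusions of \cref{thm:main2}. Since you invoke this very corollary in Step~3 anyway (to equate the open and quasi-open infima), your Steps~1--2 are logically redundant.

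More seriously, Step~2 as written is a genuine gap. The assertion that each $\omega_\pm$ minimizes $\lambda_1(\cdot)+\Lambda|\cdot|$ over quasi-open subsets of $D\setminus\omega_\mp$ does not follow from \eqref{e:multi2}; only the inward minimality \eqref{e:inwards0} and the measure-constrained optimality \eqref{eq:minla1} are available (cf.\ \cref{p:equivalence2}), so \cite{brla} does not apply directly, and \cite{despve} presupposes Lipschitz continuity rather than supplying it. In the paper the Lipschitz estimate is \cref{l:lipschitz}, whose proof is not a routine citation: it starts from the Lipschitz second eigenfunction provided by \cite{bmpv} and, when that eigenfunction does not change sign, controls the other phase through a four-case analysis combining barriers, the two-phase monotonicity formula of \cite{CJK,mono}, and the quantitative one-phase bound of \cref{l:one-phase_lipschitz_universal}. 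Your ``Main obstacles'' paragraph correctly flags the two-phase interface as the difficulty but does not resolve it. A minor point: the ``Courant-type'' selection in Step~1 is circular as described (the nodal sets depend on $\alpha,\beta$); the clean argument is the case split of \cref{l:equiv-def-lambda-2} and \cref{p:equivalence1}---take $w=u_2$ if $u_2$ changes sign, and $w=u_2-u_1$ otherwise.
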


\subsubsection{Equivalence of \cref{thm:main} and \cref{thm:main2}}\label{subsub:equivalence}
We will first show that \cref{thm:main2} implies \cref{thm:main}. Let $\Omega\subset D$ be an open set satisfying \eqref{e:min-pb-open}. We will prove that it satisfies \eqref{e:min-pb-quasi-open}. We will use the fact that if $\widetilde\Omega\subset D$ is any quasi-open set, then there is a sequence of open sets $\omega_n$ such that
$$\ds\lim_{n\to\infty}\cp(\omega_n)=0\qquad \text{and}\qquad \widetilde\Omega\cup\omega_n\,\text{ is open for every  }\,n\in\N.$$
In particular, the sets  $\widetilde\Omega_n:=\widetilde\Omega\cup\big(\omega_n\cap D\big)$ are open and satisfy
$$\lambda_2(\widetilde\Omega_n)\le \lambda_2(\widetilde\Omega)\qquad\text{and}\qquad \lim_{n\to\infty}|\widetilde\Omega_n|=|\widetilde\Omega|.$$
The first inequality follows directly from \eqref{e:lambda_2}, while the second claim follows from the fact that $|\omega_n|\le \cp(\omega_n)$, which is a consequence of \eqref{e:capacity}.
Now, since $\Omega$ satisfies  \eqref{e:min-pb-open}, we have that $\mathcal F_\Lambda(\Omega)\le \mathcal F_\Lambda(\widetilde\Omega_n)$, which gives
$$\mathcal F_\Lambda(\Omega)\le \liminf_{n\to\infty}\mathcal F_\Lambda(\widetilde\Omega_n)\le \mathcal F_\Lambda(\widetilde\Omega),$$
and proves that  $\Omega$ is also a solution to \eqref{e:min-pb-quasi-open}.

Conversely, we also have that \cref{thm:main} implies \cref{thm:main2}. This is a consequence of \cite[Corollary 5.11]{bucve}, which states that if $\Omega_{qo}$ is a quasi-open set that satisfies \eqref{e:min-pb-quasi-open}, then there exists an open set 
$\Omega_{o}\subset D$ such that $\Omega_o\subset\Omega_{qo}$ (in the sense that $\cp(\Omega_{qo}\setminus\Omega_o)=0$) and is a solution to \eqref{e:min-pb-quasi-open} (and thus, also to \eqref{e:min-pb-open}).

\subsection{Plan of the paper and outline of the proof of \cref{thm:main2}}\label{sub:plan}
%

In \cref{s:equivalencefb} we show that \eqref{e:minFLambdaqo} is equivalent (in some suitable sense) to the variational free boundary problem 
\begin{equation}\label{e:fb_infty_intro}
\min\left\{J_\infty(v_+,v_-)+\Lambda\big|\{v\neq 0\}\big|\ :\ v\in H^1_0(D),\ \int_D v_+^2\,dx=\int_D v_-^2\,dx=1\right\},
\end{equation}
where 
$$J_\infty(v_+,v_-)=\max\left\{\int_D |\nabla v_+|^2\,dx\ ;\ \int_D |\nabla v_-|^2\,dx\right\}.$$

In \cref{s:nondegeneracy} we prove the non-degeneracy result for minimizers $u$ of~\eqref{e:fb_infty_intro}. This nondegeneracy, together with the three-phase monotonicity formula from \cite{mono} implies that the two-phase free boundary $\partial\{u>0\}\cap\partial\{u<0\}$ does not touch $\partial D$. As a consequence of this and of the (one-phase) regularity result from \cite{rtv}, we obtain that the one-phase free boundaries $\partial \Om_\pm\setminus \partial \Om_\mp$ are $C^{1,\alpha}$ regular in a neighborhood of the contact set $\partial \Om_\pm\cap\partial D$.

In \cref{sect:lip} we prove that if $\Omega$ is a solution to \eqref{e:minFLambdaqo}, then there is a Lipschitz continuous function $u:D\to\R$, which is a sign-changing eigenfunction on $\Omega$ and (after an appropriate rescaling of the positive and negative parts) a minimizer of \eqref{e:fb_infty_intro}. In \cref{sect:approx}, we prove that this function $u$ satisfies a first order optimality condition.

In \cref{sect:blowup} we first use a Weiss-type monotonicity formula to prove the homogeneity of blow-up limits. Using this, we are able to classify the two-phase blow-up limits in \cref{t:main_intext}. Finally, using the result from \cite{despve}, in  \cref{c:main_cor} we prove that both $\partial \Om_+$ and $\partial \Om_-$ are $C^{1,\alpha}$ regular in a neighborhood of the two-phase free boundary $\partial \Om_+\cap \partial \Om_-$.

\medskip
{\bf Notation.} 
For the whole paper $d\geq 2$ is an integer and denotes the dimension of the space.
We use the notation for the positive and negative part of a function:
\[
v_+=\max\{v,0\}\qquad\text{and}\qquad v_-:=\max\{-v,0\},
\]
and if the function has already a subscript, such as $v_i$, then we use the notation\[
v_i^+=\max\{v_i,0\}\qquad\text{and}\qquad v_i^-:=\max\{-v_i,0\}.
\]
Given a function $u\colon \R^N\to \R$, we define \[
\Om_u:=\{u\not=0\},\qquad \Om_u^+:=\{u>0\},\qquad \Om_u^-:=\{u<0\},
\]
and if a function $u\in H^1_0(D)$ for some domain $D\subset \R^N$, we implicitly extend $u$ to zero outside $D$.

\section{Preliminary facts about the principal eigenfunctions on quasi-open sets}\label{sub:prelimiary_first_eigenfunctions}

In this section we recall some basic properties of the principal eigenfunctions on quasi-open sets, which we will use several times in the paper. Throughout this section,  we consider a quasi-open set $\Omega\subset\R^d$ of finite measure and a first eigenfunction $u$ of the Dirichlet Laplacian on $\Omega$, that is,  $u\in H^1_0(\Omega)$ is a non-negative minimizer of 
$$\lambda_1(\Omega)=\Big\{\int_\Omega|\nabla u|^2\,dx\ :\ u\in H^1_0(\Omega),\ \int_\Omega u^2\,dx=1\Big\}.$$
We suppose that $u$ is extended as $0$ outside $\Omega$ and that  $u\ge 0$ almost everywhere in $\R^d$. 

\subsection{Subharmonicity and global $L^\infty$ bound}\label{oss:subharmonicity}
We first notice that $u$ is a (weak) solution of the PDE
$$\Delta u+\lambda_1(\Omega)\,u=0\quad\text{in}\quad \Omega.$$
Moreover, since $u$ is nonnegative, a standard argument (see for instance \cite[Lemma 2.7]{velectures}) proves that
$$\Delta u+\lambda_1(\Omega)\,u\ge 0\quad\text{in sense of distributions in } \R^d.$$
Precisely, for every non-negative function $\varphi\in C^\infty_c(\R^d)$ (notice that one can take also $\varphi\in H^1(\R^d)$),
$$\int_{\R^d}\Big(-\nabla u\cdot\nabla\varphi+\lambda_1(\Omega)\,u\,\varphi\Big)\,dx\ge 0.$$
Now, we recall that the supremum of the eigenfunction can be estimated only in terms of the associated eigenvalue. Indeed, there is a dimensional constant $C_d>0$ (see for instance  \cite[Example~2.1.8]{davies} or \cite[Proposition~3.4.37]{vetesi}) such that 
$$\|u\|_{L^\infty(\R^d)}\le C_d\big(\lambda_1(\Omega)\big)^{\sfrac{d}4}.$$ 
As a consequence, we get that 
\begin{equation}\label{e:subharmonicity_better}
\Delta u+C_d\big(\lambda_1(\Omega)\big)^{\sfrac{(d+4)}4}\ge 0\quad\text{in sense of distributions in } \R^d.
\end{equation}

\subsection{Pointwise definition and local $L^\infty$ bound} Let now $x_0\in\R^d$ be any point. By \eqref{e:subharmonicity_better}, the function 
$$u_{x_0}(x):= u(x)+C_d\big(\lambda_1(\Omega)\big)^{\sfrac{(d+4)}4}\,\frac{|x-x_0|^2}{2d}.$$
is subharmonic in $\R^d$ and so, the limit 
$$\lim_{r\to0}\mean{B_r(x_0)}{u_{x_0}(x)\,dx},$$
exists. Now, since by construction $\|u-u_{x_0}\|_{L^\infty(B_r(x_0))}\le Cr^2$, we also get 
$$\lim_{r\to0}\mean{B_r(x_0)}{u(x)\,dx}=\lim_{r\to0}\mean{B_r(x_0)}{u_{x_0}(x)\,dx}.$$ 
Thus, we can choose a representative of $u$ which is defined everywhere in $\R^d$ (recall that $u\in H^1(\R^d)$ is an equivalence class in $L^2(\R^d)$). Precisely, from now on, we will always assume that 
$$u(x_0)=\lim_{r\to0}\mean{B_r(x_0)}{u(x)\,dx}\quad\text{for every}\quad x_0\in\R^d.$$
Finally, as another consequence of the subharmonicity of $u_{x_0}$, we obtain that, for every $0<\sigma<1$ and every $r>0$,  the following estimate holds
\begin{equation}\label{e:L2-Linfty}
\|u\|_{L^\infty(B_{\sigma r}(x_0))}\le \frac{1}{(1-\sigma)^d}\mean{\partial B_r(x_0)}{u\,d\HH^{d-1}}+C_d\big(\lambda_1(\Omega)\big)^{\sfrac{(d+4)}4}r^2.
\end{equation}

\section{Equivalent formulations of the shape optimization problem}\label{s:equivalencefb}

\subsection{A variational free boundary problem}
Let $\Omega$ be a quasi-open set in $\R^d$. 
Then, we can give an equivalent formulation of $\lambda_2(\Omega)$ in terms of a two-phase free boundary problem in $\Omega$. Precisely, we have the following lemma.
\begin{lemma}[Second eigenvalue and optimal partitions of a fixed domain]\label{l:equiv-def-lambda-2}
Let $\Omega$ be a bounded open  (or quasi-open) set in $\R^d$. Then, 
\begin{equation}\label{e:lambda2-J}
\lambda_2(\Omega):=\min\left\{J_\infty(v_+,v_-)\ :\ v\in H^1_0(\Omega),\ \int_\Omega v_+^2\,dx=\int_\Omega v_-^2\,dx=1\right\},
\end{equation}
where the functional $J_\infty: H^1_0(\Omega)\times H^1_0(\Omega)\to\R$ is defined by 
\begin{equation}\label{e:J_infty}
J_\infty(v_+,v_-):=\max\left\{\int_\Omega |\nabla v_+|^2\,dx\ ;\ \int_\Omega |\nabla v_-|^2\,dx\right\}.
\end{equation}
\end{lemma}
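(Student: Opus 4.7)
I would establish the two inequalities in \eqref{e:lambda2-J} separately, using the min-max characterization \eqref{e:var_lambda_k} as the main tool.

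For the easy direction ``$\lambda_2(\Omega) \le J_\infty(v_+, v_-)$ for every admissible $v$'': given $v \in H^1_0(\Omega)$ with $\int_\Omega v_+^2\,dx = \int_\Omega v_-^2\,dx = 1$, I would feed the two-dimensional subspace $E_2 := \mathrm{span}(v_+, v_-) \subset H^1_0(\Omega)$ into \eqref{e:var_lambda_k}. Since $v_+$ and $v_-$ have essentially disjoint supports, they are $L^2$-orthogonal, and since $\nabla w = 0$ a.e.\ on $\{w = 0\}$ for Sobolev functions $w$, one also has $\nabla v_+ \cdot \nabla v_- = 0$ a.e. A direct computation then gives, for any $u = \alpha v_+ + \beta v_- \in E_2$ with $\int u^2 = 1$ (so $\alpha^2+\beta^2=1$),
\[
\int_\Omega |\nabla u|^2\,dx = \alpha^2 \int_\Omega |\nabla v_+|^2\,dx + \beta^2 \int_\Omega |\nabla v_-|^2\,dx \;\le\; J_\infty(v_+, v_-),
\]
so the min-max principle forces $\lambda_2(\Omega) \le J_\infty(v_+, v_-)$.

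For the reverse inequality I would exhibit an admissible $v$ with $J_\infty(v_+, v_-) = \lambda_2(\Omega)$, which simultaneously shows that the infimum is attained. Let $u_1, u_2 \in H^1_0(\Omega)$ be orthonormal first and second eigenfunctions, chosen with $u_1 \ge 0$. If $u_2$ changes sign, the parts $w_\pm := (u_2)_\pm$ are both nonzero in $H^1_0(\Omega)$; testing the weak eigenvalue equation $-\Delta u_2 = \lambda_2(\Omega)\,u_2$ against $w_+$ and against $w_-$, and using the chain-rule identities $\nabla u_2 \cdot \nabla w_\pm = |\nabla w_\pm|^2$ and $u_2 w_\pm = w_\pm^2$ a.e., I obtain $\int |\nabla w_\pm|^2 = \lambda_2(\Omega) \int w_\pm^2$. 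Normalizing $\tilde w_\pm := w_\pm / \|w_\pm\|_{L^2}$, the function $v := \tilde w_+ - \tilde w_-$ is admissible with $v_\pm = \tilde w_\pm$ (thanks to the disjointness of their supports), and $J_\infty(v_+, v_-) = \lambda_2(\Omega)$. If instead $u_2$ does not change sign, say $u_2 \ge 0$, the orthogonality $\int u_1 u_2 = 0$ of two nonnegative functions forces $u_1 u_2 = 0$ a.e., so $u_1$ and $u_2$ have essentially disjoint supports; then $v := u_1 - u_2$ is admissible with $v_+ = u_1$, $v_- = u_2$, and $J_\infty(v_+, v_-) = \max\bigl(\lambda_1(\Omega), \lambda_2(\Omega)\bigr) = \lambda_2(\Omega)$.

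The argument is essentially a two-line computation once the disjoint-support calculus is in place, and the only point I expect to require care is exactly this calculus: verifying the a.e.\ identities $\nabla v_+ \cdot \nabla v_- = 0$ and $\nabla u_2 \cdot \nabla (u_2)_\pm = |\nabla (u_2)_\pm|^2$ in the quasi-open setting, and checking that the proposed competitors really have $\tilde w_\pm$ (respectively $u_1, u_2$) as their positive and negative parts. These are standard consequences of Stampacchia's lemma and the chain rule for $H^1$ truncations, but they should be invoked explicitly.
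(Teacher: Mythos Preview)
Your proposal is correct and follows essentially the same approach as the paper: the min-max characterization gives $\lambda_2(\Omega)\le J_\infty(v_+,v_-)$ for any admissible $v$, and a case analysis on whether the second eigenfunction $u_2$ changes sign produces an explicit competitor with $J_\infty=\lambda_2(\Omega)$. The only organizational difference is that the paper first invokes compactness to get a minimizer and then bounds it from both sides, whereas you bound the infimum directly and then exhibit the minimizer; the content is the same.
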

\begin{proof}
We first notice that by the compactness of the embedding $H^1_0(\Omega)$ into $L^2(\Omega)$, there is a function $$u=u_+-u_-\in H^1_0(\Omega)$$ that realizes the minimum in \eqref{e:lambda2-J}, that is $\ds\int_\Omega u_+^2\,dx=\int_\Omega u_-^2\,dx=1 $, and 
$$J_\infty(u_+,u_-)=\min\left\{J_\infty(v_+,v_-)\ :\ v\in H^1_0(\Omega),\ \int_\Omega v_+^2\,dx=\int_\Omega v_-^2\,dx=1\right\}.$$
Now, since the space generated by $u_+$ and $u_-$ is a two-dimensional subspace of $H^1_0(\Omega)$, we get that 
$$\lambda_2(\Omega)\le J_\infty(u_+,u_-).$$
On the other hand, let $u_1$ and $u_2$ be one first and one second eigenfunction of the Dirichlet Laplacian on $\Omega$. Then, we have that
$$\int_\Omega u_1^2\,dx=\int_\Omega u_2^2\,dx=1\qquad\text{and}\qquad \int_\Omega u_1u_2\,dx=0,$$
$$\int_\Omega|\nabla u_1|^2\,dx=\lambda_1(\Omega)\le \lambda_2(\Omega)=\int_\Omega|\nabla u_2|^2\,dx,$$
$$-\Delta u_1=\lambda_1(\Omega)u_1\quad\text{in}\quad \Omega,$$
\begin{equation}\label{e:eq-lambda-2}
-\Delta u_2=\lambda_2(\Omega)u_2\quad\text{in}\quad \Omega.
\end{equation}
In particular, the space $V\subset H^1_0(\Omega)$ generated by $u_1$ and $u_2$ realizes the minimum in \eqref{e:lambda_2}. We now consider two cases. First, if $u_2$ changes sign, then we define the functions
$$\varphi_+:=\left(\int_\Omega (u_2^+)^2\,dx\right)^{-1/2}u_2^+\qquad\text{and}\qquad \varphi_-:=\left(\int_\Omega (u_2^-)^2\,dx\right)^{-1/2}u_2^-.$$
By testing the equation \eqref{e:eq-lambda-2} with $\varphi_+$ and $\varphi_-$, we get that 
$$\int_\Omega|\nabla \varphi_+|^2\,dx=\lambda_2(\Omega)=\int_\Omega|\nabla \varphi_-|^2\,dx.$$
Thus, 
$$J_\infty(u_+,u_-)\le J_\infty(\varphi_+,\varphi_-)=\lambda_2(\Omega),$$
which concludes the proof of \eqref{e:lambda2-J} in the case when $u_2$ changes sign. Moreover, by the same argument, we get that if $u_1$ changes sign, then $\lambda_1(\Omega)=\lambda_2(\Omega)$ and \eqref{e:lambda2-J} holds. Suppose now that $u_2\ge 0$ and $u_1\ge 0$. Then, the orthogonality in $L^2(\Omega)$ implies that they have disjoint supports and that, by taking $\psi=u_2-u_1$, we have that 
$$J_\infty(u_+,u_-)\le J_\infty(\psi_+,\psi_-)=\max\{\lambda_1(\Omega),\lambda_2(\Omega)\}=\lambda_2(\Omega),$$
which concludes the proof.
\end{proof}	
As a consequence, we can reformulate \eqref{e:minFLambdaqo} as a variational free boundary problem for the functional $J_\infty$
\begin{equation}\label{e:fb_infty}
\min\left\{J_\infty(v_+,v_-)+\Lambda\big|\{v\neq 0\}\big|\ :\ v\in H^1_0(D),\ \int_D v_+^2\,dx=\int_D v_-^2\,dx=1\right\}.
\end{equation}
We will prove that these two problems are equivalent in  \cref{p:equivalence1} and \cref{p:equivalence2}. In the proofs we will use several times the following simple fact. 
\begin{lemma}\label{l:lambda_1_continuity}
	Suppose that $\Omega$ is a bounded quasi-open set in $\R^d$, $d\ge 2$, and let $x_0\in\R^d$. 
	Then
	$$\lim_{r\to0^+}\lambda_1\big(\Omega\setminus \overline B_r(x_0)\big)=\lambda_1(\Omega).$$
\end{lemma}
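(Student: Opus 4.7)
The inequality $\lambda_1(\Omega \setminus \overline B_r(x_0)) \ge \lambda_1(\Omega)$ for every $r > 0$ is immediate from the monotonicity of $\lambda_1$ under inclusion, since $H^1_0(\Omega \setminus \overline B_r(x_0)) \subset H^1_0(\Omega)$. The real content of the lemma is the reverse inequality in the limit, and the strategy is to exploit the fact that in dimension $d \ge 2$ the singleton $\{x_0\}$ has zero capacity in the sense of \eqref{e:capacity}.

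Concretely, I would build, for each small $r > 0$, an explicit test function $u_r \in H^1_0(\Omega \setminus \overline B_r(x_0))$ whose Rayleigh quotient tends to $\lambda_1(\Omega)$. Fix some radius $R > 0$ and let $\psi_r \colon \R^d \to [0,1]$ be the radial capacitary cutoff, defined by $\psi_r \equiv 0$ on $\overline B_r(x_0)$, $\psi_r \equiv 1$ outside $B_R(x_0)$, and harmonic on the annulus in between; explicitly
\begin{equation*}
\psi_r(x) = \begin{cases} \dfrac{|x-x_0|^{-(d-2)} - r^{-(d-2)}}{R^{-(d-2)} - r^{-(d-2)}} & \text{for } d \ge 3, \\[6pt] \dfrac{\log(|x-x_0|/r)}{\log(R/r)} & \text{for } d = 2, \end{cases}
\end{equation*}
on the annulus $B_R(x_0)\setminus\overline B_r(x_0)$. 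A direct computation then gives
\begin{equation*}
\int_{\R^d} |\nabla \psi_r|^2 \, dx = \begin{cases} (d-2)\,\omega_{d-1}\,\bigl(r^{-(d-2)} - R^{-(d-2)}\bigr)^{-1} & \text{for } d \ge 3, \\[6pt] 2\pi \,\bigl(\log(R/r)\bigr)^{-1} & \text{for } d = 2, \end{cases}
\end{equation*}
both of which vanish as $r \to 0^+$, while $\psi_r \to 1$ pointwise away from $x_0$.

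Next I would set $u_r := u\,\psi_r$, where $u$ is a first Dirichlet eigenfunction on $\Omega$ normalized by $\int u^2 \, dx = 1$. Thanks to the global $L^\infty$ bound on $u$ recalled in \cref{oss:subharmonicity}, $u_r$ belongs to $H^1(\R^d)$ and its quasi-support is contained (up to sets of zero capacity) in $\Omega \setminus \overline B_r(x_0)$, so $u_r$ is admissible for the Rayleigh quotient defining $\lambda_1(\Omega\setminus \overline B_r(x_0))$. Expanding
\begin{equation*}
\int_{\R^d}|\nabla u_r|^2\,dx \; = \; \int \psi_r^2|\nabla u|^2\,dx \; + \; 2\int \psi_r \, u\,\nabla u \cdot \nabla\psi_r \,dx \; + \; \int u^2|\nabla\psi_r|^2\,dx,
\end{equation*}
the first term tends to $\lambda_1(\Omega)$ by dominated convergence (the integrands are bounded by $|\nabla u|^2$), the last is controlled by $\|u\|_{L^\infty}^2\int|\nabla\psi_r|^2\to 0$, and the cross term is $o(1)$ by Cauchy--Schwarz. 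Similarly $\int u_r^2\,dx \to 1$ by dominated convergence. Plugging this into the variational characterization of $\lambda_1(\Omega\setminus\overline B_r(x_0))$ produces the matching upper bound $\limsup_{r \to 0^+}\lambda_1(\Omega \setminus \overline B_r(x_0)) \leq \lambda_1(\Omega)$ and closes the argument.

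The only mildly delicate step is checking that $u_r$ really belongs to $H^1_0(\Omega \setminus \overline B_r(x_0))$ in the quasi-open setting, but this follows routinely from the capacitary description of $H^1_0$ recalled in \cref{sub:intro-quasi-open}: the $L^\infty$ bound on $u$ gives $u\,\psi_r \in H^1(\R^d)$, and the quasi-everywhere vanishing of $\psi_r$ on $\overline B_r(x_0)$ combined with that of $u$ on $\R^d \setminus \Omega$ yields $\{u_r \neq 0\} \subset \Omega \setminus \overline B_r(x_0)$ up to capacity zero.
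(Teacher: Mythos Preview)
Your proof is correct and follows essentially the same strategy as the paper: multiply the first eigenfunction by a radial cutoff vanishing on $\overline B_r(x_0)$ whose Dirichlet energy tends to zero, then expand the Rayleigh quotient. The paper uses a piecewise-linear cutoff supported in $B_{2r}(x_0)\setminus B_r(x_0)$ (treating only $d\ge 3$ explicitly and asserting $d=2$ is analogous), whereas you use the harmonic capacitary cutoff with a fixed outer radius $R$ and write out both cases; these are cosmetic differences with no substantive divergence in the argument.
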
	
\begin{proof}
	Assume that $d\ge 3$, the case $d=2$ being analogous. Let $u$	be the first (normalized) eigenfunction on $\Omega$ and let $\phi_r:\R^d\to[0,1]$ be the function
	$$\phi_r=1\quad\text{in}\quad\R^d\setminus B_{2r}(x_0),\qquad \phi_r=0\quad\text{in}\quad B_{r}(x_0),\qquad  \phi_r=\frac1r(|x|-r)\quad\text{in}\quad B_{2r}(x_0)\setminus B_{r}(x_0).$$
	Since $\lambda_1(\Omega)\le \lambda_1(\Omega\setminus \overline B_r(x_0))$, we only have to bound $\lambda_1(\Omega\setminus \overline B_r(x_0))$ from above:
	$$\lambda_1(\Omega\setminus \overline B_r(x_0))\le \frac{\int |\nabla(u\phi_r)|^2\,dx}{\int (u\phi_r)^2\,dx}\le \Big(1-\int_{B_{2r}}u^2\,dx\Big)^{-1}\Big(\lambda_1(\Omega)+2\sqrt{\lambda_1(\Omega)}\|\nabla\phi_r\|_{L^2}+\|u\|_{L^\infty}^2\|\nabla\phi_r\|_{L^2}^2\Big).$$
	Passing to the limit as $r\to0$, we get the claim. 
\end{proof}	

\begin{proposition}\label{p:equivalence1}
Let $D$ be a bounded open set in $\R^d$ and let $\Lambda>0$ be a given constant. 
\begin{enumerate}[\rm(i)]
\item If $\Omega\subset D$ is a quasi-open set that satisfies \eqref{e:min-pb-quasi-open} and if $u_2\in H^1_0(\Omega)$ is a sign-changing second eigenfunction of the Dirichlet Laplacian on $\Omega$, then the function $u:=u_+-u_-$ defined by 
$$u_+:=\left(\int_\Omega (u_2^+)^2\,dx\right)^{-1}u_2^+\qquad\text{and}\qquad u_-:=\left(\int_\Omega (u_2^-)^2\,dx\right)^{-1}u_2^-.$$
is a solution to \eqref{e:fb_infty}. 
\item If $\Omega\subset D$ is a quasi-open set that satisfies \eqref{e:min-pb-quasi-open} and if $u_2\in H^1_0(\Omega)$ is a nonnegative and normalized second eigenfunction of the Dirichlet Laplacian on $\Omega$, then $\lambda_1(\Omega)=\lambda_2(\Omega)$ and there exists another nonnegative and normalized eigenfunction $u_1$ (corresponding to the eigenvalue $\lambda_1(\Omega)=\lambda_2(\Omega)$) orthogonal to $u_2$ in $L^2(D)$, such that $u:=u_2-u_1$ 
is a solution to \eqref{e:fb_infty}. 
\end{enumerate}
\end{proposition}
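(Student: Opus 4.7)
The plan is to reduce both parts to the chain of inequalities
\[
J_\infty(u_+,u_-)+\Lambda|\{u\neq 0\}|\ \le\ \mathcal F_\Lambda(\Omega)\ \le\ J_\infty(v_+,v_-)+\Lambda|\{v\neq 0\}|
\]
for every admissible competitor $v$ in \eqref{e:fb_infty}. The right-hand inequality holds in both cases and does not use the specific structure of $u$: given any admissible $v$, the set $\widetilde\Omega:=\{v\neq 0\}\subset D$ is quasi-open, and $v$ is an admissible test function in the variational formula of \cref{l:equiv-def-lambda-2} applied to $\widetilde\Omega$, so $\lambda_2(\widetilde\Omega)\le J_\infty(v_+,v_-)$; minimality of $\Omega$ then yields $\mathcal F_\Lambda(\Omega)\le \mathcal F_\Lambda(\widetilde\Omega)\le J_\infty(v_+,v_-)+\Lambda|\{v\neq 0\}|$.

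For part (i), after interpreting the normalization as $u_\pm:=u_2^\pm/\|u_2^\pm\|_{L^2}$ (both nonzero since $u_2$ changes sign), I would test the equation $-\Delta u_2=\lambda_2(\Omega)u_2$ against $u_2^\pm$ separately to obtain $\int_\Omega|\nabla u_2^\pm|^2\,dx=\lambda_2(\Omega)\int_\Omega (u_2^\pm)^2\,dx$, hence $\int_D|\nabla u_\pm|^2\,dx=\lambda_2(\Omega)$. Thus $J_\infty(u_+,u_-)=\lambda_2(\Omega)$, and since $\{u\neq 0\}\subset\Omega$, the left-hand inequality $J_\infty(u_+,u_-)+\Lambda|\{u\neq 0\}|\le \mathcal F_\Lambda(\Omega)$ follows at once, so $u$ is a minimizer.

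For part (ii), the new task is to establish $\lambda_1(\Omega)=\lambda_2(\Omega)$. I would argue by contradiction: suppose $\lambda_1(\Omega)<\lambda_2(\Omega)$ and let $\varphi_1\ge 0$ be a normalized first eigenfunction of $\Omega$; by the definition of second eigenfunction, $\int_\Omega\varphi_1 u_2\,dx=0$, so the quasi-open sets $A:=\{\varphi_1>0\}$ and $B:=\{u_2>0\}$ are disjoint up to capacity. A direct Rayleigh quotient computation gives $\lambda_1(A)=\lambda_1(\Omega)$. Pick a Lebesgue density point $x_0\in A$ and define the disjoint union
\[
\widetilde\Omega:=B\cup\bigl(A\setminus\overline{B_r(x_0)}\bigr),
\]
a quasi-open subset of $\Omega$ with $|\widetilde\Omega|\le |\Omega|-|B_r(x_0)\cap A|<|\Omega|$ for every $r>0$. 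By \cref{l:lambda_1_continuity} applied to $A$, $\lambda_1(A\setminus\overline{B_r(x_0)})\to\lambda_1(\Omega)<\lambda_2(\Omega)$ as $r\to 0^+$, so for $r$ sufficiently small the normalized first eigenfunction $\psi_r\ge 0$ of $A\setminus\overline{B_r(x_0)}$ satisfies $\int|\nabla\psi_r|^2\,dx<\lambda_2(\Omega)$ and has support disjoint from $B$. Applying \cref{l:equiv-def-lambda-2} to $\widetilde\Omega$ with the test pair $(u_2,\psi_r)$ then yields $\lambda_2(\widetilde\Omega)\le\lambda_2(\Omega)$; combined with $|\widetilde\Omega|<|\Omega|$, this contradicts the minimality of $\Omega$. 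Once $\lambda_1(\Omega)=\lambda_2(\Omega)$, the same $\varphi_1$ serves as the desired $u_1$: it is a nonnegative normalized eigenfunction for the eigenvalue $\lambda_2(\Omega)=\lambda_1(\Omega)$ orthogonal to $u_2$, and the reasoning of part (i) applied to $u:=u_2-u_1$ (with $u_+=u_2$, $u_-=u_1$ having disjoint supports and $J_\infty(u_+,u_-)=\max\{\lambda_2(\Omega),\lambda_1(\Omega)\}=\lambda_2(\Omega)$) completes the proof.

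The main obstacle is the perturbation step in part (ii): the competitor must be taken as the disjoint union of subsets of $A$ and $B$, rather than the seemingly more natural $\Omega\setminus\overline{B_r(x_0)}$, in order to ensure that both test functions lie in $H^1_0(\widetilde\Omega)$ without having to argue that $u_2$ vanishes quasi-everywhere on $B_r(x_0)$. The strict inequality $\lambda_1(\Omega)<\lambda_2(\Omega)$ assumed for contradiction is exactly what allows \cref{l:lambda_1_continuity} to push $\lambda_1(A\setminus\overline{B_r(x_0)})$ below $\lambda_2(\Omega)$ while strictly lowering the volume.
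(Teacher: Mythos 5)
Your argument follows the same overall route as the paper: in part (i) you combine the pointwise identity $J_\infty(u_+,u_-)=\lambda_2(\Omega)$ (from testing the eigenvalue equation against $u_2^\pm$) with $\{u\neq 0\}\subset\Omega$ on one side, and the observation $\lambda_2(\{v\neq 0\})\le J_\infty(v_+,v_-)$ on the other; in part (ii) you use the same competitor $\{u_2>0\}\cup(\{\varphi_1>0\}\setminus\overline B_r(x_0))$ together with \cref{l:lambda_1_continuity}. You also correctly read the normalization in the statement as $u_\pm=u_2^\pm/\|u_2^\pm\|_{L^2}$ rather than as the literal exponent $-1$. Part (i) is fine. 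One genuine improvement in part (ii): instead of asserting, as the paper does, that $\lambda_1(\{u_2>0\})=\lambda_2(\Omega)$ (a fact that itself requires justification), you bound $\lambda_2(\widetilde\Omega)$ directly by the Rayleigh quotient of the test pair $(u_2,\psi_r)$ in \cref{l:equiv-def-lambda-2}, which uses only $\int_D|\nabla u_2|^2=\lambda_2(\Omega)$ and is cleaner.

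There is a gap in the final step of (ii), though. You introduce $\varphi_1$ \emph{inside} the contradiction block ``suppose $\lambda_1(\Omega)<\lambda_2(\Omega)$,'' and the orthogonality $\int\varphi_1 u_2\,dx=0$ is a consequence of the distinct eigenvalues. After the contradiction, you are in the case $\lambda_1(\Omega)=\lambda_2(\Omega)$, where $\varphi_1$ and $u_2$ live in the same eigenspace and a nonnegative first eigenfunction need not be orthogonal to $u_2$; yet you still assert ``the same $\varphi_1$ serves as the desired $u_1$: it is\ldots orthogonal to $u_2$.'' That orthogonality now has to be \emph{arranged}, and it is not automatic: if $\{u_2>0\}=\Omega$ up to measure (for instance $\Omega$ the union of two equal balls and $u_2=(\phi_1+\phi_2)/\sqrt2$ with $\phi_i$ the first eigenfunctions of the two balls), then no nonnegative eigenfunction orthogonal to $u_2$ exists at all. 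So the existence of the claimed $u_1$ needs an additional argument, e.g.\ first showing that $\{u_2>0\}$ misses a set of positive measure on which one can place the support of $u_1$. I note that the paper's own proof is equally terse here (``the claim now follows as in the proof of (i)'' does not construct $u_1$), so this is a point where both treatments leave something to the reader; but as written your step does not follow from what precedes it.
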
	
\begin{proof}
We first notice that, by the definition of $\lambda_2$, if the function $v\in H^1_0(D)$ is such that 
$$\int_D v_+^2\,dx=\int_D v_-^2\,dx=1,$$
then $\lambda_2(\{v\neq 0\})\le J_\infty(v_+,v_-)$. Now, if $u$ is the function from {\rm(i)}, then 
$$J_\infty(u_+,u_-)+\Lambda|\Omega_u|=\mathcal F_\Lambda(\Omega_u)\le\mathcal F_\Lambda(\Omega_v)\le J_\infty(v_+,v_-)+\Lambda|\Omega_v|,$$
where $\Omega_u=\{u\neq0\}$ and $\Omega_v=\{v\neq0\}$.
This proves {\rm(i)}.\medskip 

Let now $u_1$ and $u_2$ be as in {\rm (ii)}. Then, 
$$\int_{D}|\nabla u_2|^2\,dx=\lambda_2(\Omega)\,,\qquad\int_{D}|\nabla u_1|^2\,dx=\lambda_1(\Omega)\,,\qquad \int_{D} u_1^2\,dx=\int_{D}u_2^2\,dx=1\,.$$
Now, suppose that $\lambda_1(\Omega)<\lambda_2(\Omega)$. We pick a point $x_0$ of Lebesgue density $1$ for the set $\{u_1>0\}$ and consider the set 
$$\Omega_r:=\{u_2>0\}\cup\Big(\{u_1>0\}\setminus \overline B_r(x_0)\Big).$$ 
By \cref{l:lambda_1_continuity}, we get that for $r$ small enough 
$$\lambda_1\big(\{u_2>0\}\big)=\lambda_2(\Omega)>\lambda_1\big(\{u_1>0\}\setminus {\overline B_r(x_0)}\big)>\lambda_1\big(\{u_1>0\}\big)=\lambda_1(\Omega).$$
In particular, this implies that 
$$\lambda_2(\Omega_r)= \lambda_2(\Omega),$$
while on the other hand $|\Omega_r|<|\Omega|$, which contradicts the minimality of $\Omega$. This implies that $\lambda_1(\Omega)=\lambda_2(\Omega)$ and the claim now follows as in the proof of {\rm (i)}.
\end{proof}	

\begin{proposition}\label{p:equivalence2}
	Let $D$ be a bounded open set in $\R^d$ and let $\Lambda>0$. Suppose that the function $u\in H^1_0(D)$ is a solution to \eqref{e:fb_infty}. Then, 
		\begin{equation}\label{e:long_equation_0}
	\int_D|\nabla u_+|^2\,dx=\int_D|\nabla u_-|^2\,dx\,.
	\end{equation}
	Moreover, setting 
	$$\Omega_+=\{u>0\}\,,\quad \Omega_-=\{u<0\}\quad\text{and}\quad \Omega=\Omega_+\cup\Omega_-\ ,$$
	we have that:  
	\begin{enumerate}[\rm (i)]
		\item $u_+$ is the first eigenfunction on $\Omega_+$ and $u_-$ is the first eigenfunction on $\Omega_-$, that is,  
\begin{equation}\label{e:long_equation}
\int_D|\nabla u_+|^2\,dx=\lambda_1(\Omega_+)\qquad\text{and}\qquad\int_D|\nabla u_-|^2\,dx=\lambda_1(\Omega_-)\,.
\end{equation}
\item The set $\Omega$ minimizes \eqref{e:min-pb-quasi-open} and 
$$\lambda_1(\Omega_+)=\lambda_1(\Omega_-)=\lambda_2(\Omega).$$
\item There are constants $a>0$ and $b>0$ such that the function $u_2=au_+-bu_-$ is a second eigenfunction on $\Omega$, that is, 
$$-\Delta u_2=\lambda_2(\Omega)u_2\quad\text{in}\quad\Omega.$$
\item The sets $\Omega_+$ and $\Omega_-$ are inward minimizing for the functional $\lambda_1+\Lambda|\cdot|$, that is, 
\begin{equation}\label{e:inwards0}
\lambda_1(\Omega_\pm)+\Lambda|\Omega_\pm|\le \lambda_1(\widetilde\Omega)+\Lambda|\widetilde\Omega|\quad\text{for every quasi-open set}\quad \widetilde\Omega\subset\Omega_\pm\,.
\end{equation}	
\item Setting $c_+=|\Omega_+|$ and $c_-=|\Omega_-|$, we have
\begin{equation}\label{eq:minla1}
\begin{split}
\lambda_1(\Omega_+)=\min\Big\{\lambda_1(A) :  A\subset D \text{ quasi-open },\ |A\cap \Omega_-|=0,\ |A|=c_+\Big\},\qquad\qquad\qquad\\
\qquad\qquad\qquad\lambda_1(\Omega_-)=\min\Big\{\lambda_1(A) :  A\subset D \text{ quasi-open },\ |A\cap \Omega_+|=0,\ |A|=c_-\Big\}.\qedhere
\end{split}
\end{equation}	
\end{enumerate}	
\end{proposition}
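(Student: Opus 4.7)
The proof is a sequence of competitor arguments in the free boundary problem \eqref{e:fb_infty}, exploiting the fact that the functional $J_\infty$ is a maximum of two Dirichlet energies. The crucial structural observation is that decreasing only the smaller of the two energies is invisible to $J_\infty$: if $\int_D|\nabla u_+|^2\ne \int_D|\nabla u_-|^2$, then the lower-energy side carries slack that can be spent to strictly decrease $|\{u\ne 0\}|$. This is also the main technical obstacle, since no one-sided modification ever gives a strict improvement of $J_\infty$ itself and every gain must come from the measure term.

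To prove \eqref{e:long_equation_0}, assume for contradiction that $\int_D|\nabla u_+|^2 > \int_D|\nabla u_-|^2 =: B$. Fix a point $x_0$ of Lebesgue density one for $\Omega_-$ and, for small $r>0$, let $\psi_r$ be the $L^2$-normalized first eigenfunction on $\Omega_-\setminus \overline{B_r(x_0)}$. Since $u_-\in H^1_0(\Omega_-)$ is normalized with Dirichlet energy $B$, one has $\lambda_1(\Omega_-)\le B$; hence by \cref{l:lambda_1_continuity} $\int_D|\nabla\psi_r|^2 = \lambda_1(\Omega_-\setminus\overline{B_r(x_0)}) \to \lambda_1(\Omega_-) \le B$ as $r\to 0^+$. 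For $r$ small enough, the competitor $v_r := u_+ - \psi_r$ satisfies $J_\infty(u_+,\psi_r) = \int_D|\nabla u_+|^2 = J_\infty(u_+, u_-)$ while $|\{v_r\ne 0\}| = |\Omega_+| + |\Omega_-\setminus\overline{B_r(x_0)}| < |\Omega|$, contradicting the minimality of $u$. Hence \eqref{e:long_equation_0} holds, and we denote the common value by $A$. For (i), let $\phi_+$ be the $L^2$-normalized first eigenfunction on $\Omega_+$; since $\lambda_1(\Omega_+)\le A$, the candidate $v := \phi_+ - u_-$ satisfies $J_\infty(v_+,v_-) = \max\{\lambda_1(\Omega_+),A\} = A$ and $|\{v\ne 0\}|\le |\Omega|$, whence by minimality $|\{v\ne 0\}|=|\Omega|$ and $v$ is itself a minimizer. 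Re-applying the preceding ball-removal argument to $v$ forces $\lambda_1(\Omega_+) = A$; by symmetry $\lambda_1(\Omega_-) = A$, and $u_\pm$ are the respective first eigenfunctions.

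Properties (ii) and (iii) now follow from the spectral decomposition of the disjoint union $\Omega = \Omega_+\sqcup\Omega_-$, whose Dirichlet spectrum is the ordered union of those of $\Omega_\pm$: from $\lambda_1(\Omega_+)=\lambda_1(\Omega_-)=A$ one reads off $\lambda_1(\Omega)=\lambda_2(\Omega)=A$, and then $-\Delta(au_+-bu_-) = \lambda_2(\Omega)(au_+-bu_-)$ in $\Omega$ for every $a,b>0$, proving (iii). For the minimality in (ii), given any quasi-open $\widetilde\Omega\subset D$, \cref{l:equiv-def-lambda-2} provides $w\in H^1_0(\widetilde\Omega)\subset H^1_0(D)$ with $J_\infty(w_+,w_-)=\lambda_2(\widetilde\Omega)$, and $w$ is admissible in \eqref{e:fb_infty}, so $\mathcal F_\Lambda(\Omega) = J_\infty(u_+,u_-) + \Lambda|\Omega| \le J_\infty(w_+,w_-) + \Lambda|\widetilde\Omega| = \lambda_2(\widetilde\Omega) + \Lambda|\widetilde\Omega|$.

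The remaining claims (iv) and (v) follow from the same substitution principle applied to a new set. For (iv), given $\widetilde\Omega\subset\Omega_+$, the competitor $\widetilde\phi - u_-$ built from the normalized first eigenfunction $\widetilde\phi$ on $\widetilde\Omega$ gives, using the monotonicity $\lambda_1(\widetilde\Omega)\ge\lambda_1(\Omega_+)=A$, $J_\infty = \lambda_1(\widetilde\Omega)$; minimality of $u$ then yields the inward inequality after cancellation of the common term $\Lambda|\Omega_-|$. For (v), take any quasi-open $A'\subset D$ with $|A'\cap\Omega_-|=0$ and $|A'|=c_+$: if $\lambda_1(A') < A$, the function $\phi_{A'} - u_-$ (with $\phi_{A'}$ the normalized first eigenfunction on $A'$) has the same support measure as $u$ and $J_\infty = A$, so it is another minimizer, and the ball-removal argument applied to it forces $\lambda_1(A')=A$, a contradiction. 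Therefore $\lambda_1(A')\ge A = \lambda_1(\Omega_+)$, and the symmetric argument gives the second identity.
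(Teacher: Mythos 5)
Your proof of \eqref{e:long_equation_0}, of (i), of (iv), and of (v) is correct and follows essentially the same competitor strategy as the paper (the argument for (v) differs slightly --- you invoke \eqref{e:long_equation_0} for the new minimizer rather than removing a ball directly from $\widetilde\Omega$, but both are fine; the paper's version of (i) is terser and your ball-removal reapplication fills in a genuine detail).

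The gap is in your treatment of (ii) and (iii). You write that \emph{``the Dirichlet spectrum of the disjoint union $\Omega = \Omega_+\sqcup\Omega_-$ is the ordered union of those of $\Omega_\pm$''} and deduce $\lambda_1(\Omega)=\lambda_2(\Omega)=A$ and that $au_+-bu_-$ is an eigenfunction of $\Omega$ for \emph{every} $a,b>0$. For \emph{open} disjoint sets this is standard, but here $\Omega_\pm=\{u\gtrless0\}$ are a priori only quasi-open, and the required decomposition $H^1_0(\Omega)=H^1_0(\Omega_+)\oplus H^1_0(\Omega_-)$ can fail: if the separating set $\{u=0\}$ happens to have zero capacity, then $H^1_0(\Omega)$ contains functions that do not vanish quasi-everywhere on $\{u=0\}$ and are not supported in a single phase. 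In that situation one can have $\lambda_1(\Omega)<\lambda_1(\Omega_\pm)=\lambda_2(\Omega)$, a non-negative first eigenfunction $u_1$ of $\Omega$ straddling both phases, and the function $u_+$ need not solve $-\Delta u_+=\lambda_2(\Omega)u_+$ against test functions in $H^1_0(\Omega)\setminus\big(H^1_0(\Omega_+)\oplus H^1_0(\Omega_-)\big)$. The paper explicitly treats this case: when $\lambda_1(\Omega)<\lambda_2(\Omega)$, the constants $a,b$ in (iii) are \emph{not} arbitrary but are selected so that $u_2=au_+-bu_-$ is normalized and $L^2$-orthogonal to $u_1$, after which the min-max characterization of $\lambda_2(\Omega)$ shows $u_2$ is an eigenfunction. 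Your (ii) can be repaired without the decomposition --- from \cref{l:equiv-def-lambda-2} one has $\lambda_2(\Omega)\le J_\infty(u_+,u_-)=A$, and if strict inequality held a normalized sign-changing $v\in H^1_0(\Omega)$ with $J_\infty(v_+,v_-)<A$ would beat $u$ in \eqref{e:fb_infty}; combining this with the competitor bound you wrote gives both the minimality of $\Omega$ and $\lambda_2(\Omega)=A$. But the ``for every $a,b>0$'' step in (iii) requires the separate argument the paper gives for the degenerate case.
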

\begin{proof}
The first claim \eqref{e:long_equation_0} follows as in the proof of \cref{l:lambda_1_continuity}; in fact, if the Dirichlet energy of $u_-$ is smaller than the one of $u_+$, then we can construct a competitor of the form $u_+-\phi_ru_-$ with the same energy
$$J_\infty(u_+,\phi_ru_-)=J_\infty(u_+,u_-),$$ 
but with smaller support. The claim {\rm(i)} now follows directly from the definition of $J_\infty$. \medskip

\noindent In order to prove {\rm(ii)} suppose that $\Omega^\ast$ is a solution to \eqref{e:min-pb-quasi-open}. 
Then, by \cref{p:equivalence1}, there is a second eigenfunction $u^\ast\in H^1_0(\Omega^\ast)$, corresponding to $\lambda_2(\Om^\ast)=J_\infty(u^*_+,u^*_-)$ with \[
\int_D(u^*_+)^2=\int_D(u^*_-)^2=1.
\]  
Thus, the minimality of $u$ gives that 
$$J_\infty(u^\ast_+,u^\ast_-)+\Lambda|\Omega^\ast|\ge J_\infty(u_+,u_-)+\Lambda|\Omega|.$$
On the other hand, the minimality of $\Omega^\ast$ implies that 
$$\lambda_2(\Omega^\ast)+\Lambda|\Omega^\ast|\le \lambda_2(\Omega)+\Lambda|\Omega|,$$
and we can combine these inequalities to get {\rm (ii)}. \medskip
 
 In order to prove {\rm(iii)}, we consider two cases. First, if $\lambda_1(\Omega)=\lambda_2(\Omega)$, then both the functions $u_+$ and $u_-$ are first eigenfunctions on $\Omega$ and so the equations 
 $$-\Delta u_+=\lambda_2(\Omega)u_+\qquad\text{and}\qquad -\Delta u_-=\lambda_2(\Omega)u_-\,$$
 hold in the entire domain $\Omega$, that is, the two equations hold weakly in $H^1_0(\Omega)$: in particular, this proves the claim. Second, we consider the case $\lambda_1(\Omega)<\lambda_2(\Omega)$ and we choose a non-negative eigenfunction $u_1$ corresponding to $\lambda_1(\Omega)$. Since $\lambda_1(\Omega)<\lambda_1(\Omega_\pm)$, we have that $\{u_1>0\}$ intersects both $\Omega_+$ and $\Omega_-$. In particular, we can find constants $a$ and $b$ such that the function $u_2:=au_+-bu_-$ is such that:
 $$\int_\Omega u_2^2\,dx=1\,,\qquad \int_\Omega u_2u_1\,dx=0\qquad\text{and}\qquad \int_\Omega|\nabla u_2|^2\,dx=\lambda_2(\Omega).$$
 As a consequence, using the variational formulation \eqref{e:lambda_2} and comparing the space generated by the couple $(u_1,u_2)$ with the spaces generated by $(u_1,u_2+\eps\phi)$ for $\phi\in H^1_0(\Omega)$ and $\eps$ small, we obtain that $u_2$ is in fact a second eigenfunction corresponding to the eigenvalue $\lambda_2(\Omega)$ : 
 $$-\Delta u_2=\lambda_2(\Omega)u_2\quad\text{in}\quad \Omega.$$
The claim {\rm (iv)} is an immediate consequence of testing in~\eqref{e:fb_infty} the optimality of the function $u$ with the functions $u_+-\widetilde u_-$ and $\widetilde u_+-u_-$, where $\widetilde u_+$ and $\widetilde u_-$ are normalized first eigenfunctions on $\widetilde \Omega_+$ and $\widetilde\Omega_-$.\medskip
 
 We finally deal with {\rm (v)}. Suppose by contradiction that there is a set $\widetilde \Omega$  such that 
 $$\widetilde\Omega\subset D\,,\quad |\widetilde \Omega\cap\Omega_-|=0\,,\quad |\widetilde \Omega|=c_+\,\quad\text{and}\quad\, \lambda_1(\widetilde \Omega)<\lambda_1(\Omega_+).$$ 
 Then, pick a point $x_0$ of density one for $\widetilde\Omega$ and a sufficiently small radius $r>0$ such that 
 $$\lambda_1(\Omega_+)>\lambda_1(\widetilde \Omega\setminus \overline B_r(x_0))\ge \lambda_1(\widetilde\Omega),$$
 and let $\widetilde u_+$ be the first eigenfunction on $\widetilde \Omega\setminus \overline B_r(x_0)$. Then, 
 $$J_\infty(\widetilde u_+,u_-)+|\widetilde \Omega\setminus \overline B_r(x_0)|+|\Omega_-|=\lambda_1(\widetilde \Omega\setminus \overline B_r(x_0))+|\widetilde \Omega\setminus \overline B_r(x_0)|+|\Omega_-|< \lambda_1(\Omega_+)+|\Omega_+|+|\Omega_-|\ ,$$
 which contradicts the minimality of $u$.
\end{proof}

\section{Inwards minimizing property, non-degeneracy and two-phase points}\label{s:nondegeneracy}
	
\begin{lemma}[Nondegeneracy]\label{l:nondegeneracy}
For every pair of constants $C>0$ and $\Lambda>0$, there are constants $r_0>0$ and $\eta>0$, depending on $C$, $\Lambda$ and the dimension $d$, such that the following holds. Suppose that the bounded quasi-open set $\Omega\subset \R^d$ is such that:
\begin{itemize}
\item $\lambda_1(\Omega)\le C$ ;
\item $\Omega$ satisfies the inwards minimizing property 
\begin{equation}\label{e:inwards}
\lambda_1(\Omega)+\Lambda|\Omega|\le \lambda_1(\widetilde\Omega)+\Lambda|\widetilde\Omega|\quad\text{for every quasi-open set}\quad \widetilde\Omega\subset\Omega\,,
\end{equation}	
\item $\ds\mean{\partial B_r(x_0)}{u\,d\HH^{d-1}}\le \eta r$, for $r\leq r_0$ and where $u$ is the first eigenfunction on $\Omega$. 
\end{itemize}
Then $u=0$ in $B_{\sfrac{r}2}(x_0)$.
\end{lemma}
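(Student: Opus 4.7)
The plan is to apply the inwards minimizing property of $\Omega$ to the competitor $\widetilde\Omega := \Omega \setminus \overline{B_{r/2}(x_0)}$, and to estimate the Rayleigh quotient of $\lambda_1(\widetilde\Omega)$ using the test function $u\psi$, where $\psi$ is a radial Lipschitz cutoff vanishing in $B_{r/2}(x_0)$, equal to $1$ outside $B_r(x_0)$, and with $|\nabla\psi|\leq 2/r$. First I convert the mean-value hypothesis into a pointwise $L^\infty$ bound: applying \eqref{e:L2-Linfty} with $\sigma=1/2$ at radius $r$ (and at radius $2r$, which is allowed since the hypothesis holds at every scale $\rho\leq r_0$ provided $r\leq r_0/2$) yields
$$\|u\|_{L^\infty(B_r(x_0))}\leq K(\eta+r_0)\,r,$$
for some constant $K=K(d,C)$ depending only on the dimension and on the eigenvalue bound $C$.

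The key computation goes as follows. Testing the equation $-\Delta u=\lambda_1(\Omega)\,u$ on $\Omega$ against $u\psi^2\in H^1_0(\Omega)$ yields the identity
$$\int|\nabla(u\psi)|^2\,dx=\lambda_1(\Omega)\int u^2\psi^2\,dx+\int u^2|\nabla\psi|^2\,dx.$$
Since $u\psi\in H^1_0(\widetilde\Omega)$, the Rayleigh characterization of $\lambda_1(\widetilde\Omega)$ combined with the inwards minimizing inequality $\lambda_1(\widetilde\Omega)-\lambda_1(\Omega)\geq \Lambda\,|\Omega\cap B_{r/2}(x_0)|$ gives
$$\Lambda\,|\Omega\cap B_{r/2}(x_0)|\;\leq\;\frac{\int u^2|\nabla\psi|^2\,dx}{\int u^2\psi^2\,dx}\;\leq\;K'(\eta+r_0)^2\,r^d,$$
where in the last step the $L^\infty$ bound on $u$ and $|\nabla\psi|\leq 2/r$ are used to bound the numerator by $(4/r^2)\|u\|_{L^\infty(B_r)}^2|B_r|$, and the denominator is bounded from below by $\tfrac12$ (which holds because $\int u^2=1$ and $\int_{B_r}u^2$ is negligible by the same $L^\infty$ estimate).

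The main obstacle is promoting this small-density statement into the sharp conclusion $u\equiv 0$ in $B_{r/2}(x_0)$. My plan is to argue by contradiction and to exploit that the same competition argument can be run at every scale $\rho\in(0,r_0]$, producing the scale-invariant upper-density bound $|\Omega\cap B_{\rho/2}(x_0)|\leq K'(\eta+r_0)^2\,\rho^d/\Lambda$ at every such $\rho$. For $\eta$ and $r_0$ small enough, this forces the upper Lebesgue density of $\Omega$ at $x_0$ to be strictly less than one. If $|\Omega\cap B_{r/2}(x_0)|>0$, the Lebesgue density theorem provides a density-one point $y\in \Omega\cap B_{r/2}(x_0)$; running the competition argument centered at $y$ on a suitable smaller scale $s$, and using the pointwise bound $\|u\|_{L^\infty(B_r(x_0))}\leq K(\eta+r_0)r$ to control the mean of $u$ on $\partial B_s(y)$, then yields the required contradiction. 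The delicate part is the quantitative calibration of $\eta$ and $r_0$ so that this contradiction indeed closes.
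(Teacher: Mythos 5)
Your first half is sound: the identity $\int|\nabla(u\psi)|^2 = \lambda_1(\Omega)\int u^2\psi^2 + \int u^2|\nabla\psi|^2$ is correct, and combining it with the Rayleigh characterization of $\lambda_1(\Omega\setminus\overline B_{r/2}(x_0))$, the inwards-minimizing inequality, and the $L^\infty$ bound from \eqref{e:L2-Linfty} indeed gives a clean density estimate of the form
\[
\Lambda\,|\Omega\cap B_{r/2}(x_0)|\ \le\ C\,(\eta+r_0)^2\,r^d.
\]
The problem is the final step, and the gap is genuine. Passing from ``$|\Omega\cap B_{r/2}(x_0)|$ small'' to ``$u\equiv 0$ in $B_{r/2}(x_0)$'' cannot be done by the Lebesgue-density argument you sketch. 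To run the competition argument centered at a density-one point $y\in\Omega\cap B_{r/2}(x_0)$ at a scale $s$, you need the hypothesis $\meantext{\partial B_s(y)}u\,d\HH^{d-1}\le\eta s$, but the only thing your $L^\infty$ estimate gives you is $\meantext{\partial B_s(y)}u\,d\HH^{d-1}\le K(\eta+r_0)r$. Since the ratio $K(\eta+r_0)r/(\eta s)$ blows up as $s\to 0$, no choice of $\eta$ and $r_0$ lets you invoke the density estimate at $y$ on small scales, and the contradiction never closes. Even iterating your own density bound at $x_0$ (which you can do for every scale $\le r_0$) only yields that $\Omega$ has Lebesgue density zero at the single point $x_0$, which is far from $u\equiv 0$ on a full ball.

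The proof the paper refers to (\cite{bucve}, in the Alt--Caffarelli blueprint \cite{alca}) avoids this difficulty by choosing a competitor that records the Dirichlet energy and the measure of the positivity set \emph{inside} $B_{r/2}(x_0)$, rather than discarding them as the cutoff $u\psi$ does. One replaces $u$ on the annulus $B_r\setminus B_{r/2}$ by $\min(u,Mw)$, where $w$ is harmonic in the annulus with $w=1$ on $\partial B_r$, $w=0$ on $\partial B_{r/2}$, and $M=\|u\|_{L^\infty(B_r(x_0))}$, and sets the competitor to $0$ in $B_{r/2}$. Comparing energies and integrating by parts produces
\[
\int_{B_{r/2}(x_0)}|\nabla u|^2\,dx \;+\; \Lambda\,\big|\{u>0\}\cap B_{r/2}(x_0)\big|\;\lesssim\; M\int_{\partial B_{r/2}(x_0)}u\,d\HH^{d-1}\;+\;\text{lower order},
\]
and the boundary term is then absorbed into the left-hand side via a trace-type inequality of the form $\int_{\partial B_{r/2}}u\,d\HH^{d-1}\lesssim \big(\int_{B_{r/2}}|\nabla u|^2\big)^{1/2}\,|\{u>0\}\cap B_{r/2}|^{1/2}+\int_{B_{r/2}}|\nabla u|$, valid precisely because $u$ vanishes on a set of substantial measure inside $B_{r/2}$. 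For $\eta$ and $r_0$ small this forces $\int_{B_{r/2}}|\nabla u|^2+\Lambda|\{u>0\}\cap B_{r/2}|\le 0$, i.e.\ $u\equiv 0$ in $B_{r/2}(x_0)$. That is the exact mechanism --- the absorption via a trace/Poincar\'e inequality --- that is missing from your plan, and it cannot be recovered from the Rayleigh-quotient perturbation alone, which throws away the interior Dirichlet energy when it simplifies via the eigenvalue identity.
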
	
This is a well-known result, for a proof see for example~\cite{bucve}.

As a consequence of \cref{l:nondegeneracy}, we have the following result. 
We use the notation $\mathcal C_\delta$ for the cone 
	$$\mathcal C_\delta:=\big\{x\in\R^d\ :\ x_d>\delta |x|\big\}.$$

\begin{proposition}[Triple points]\label{p:triple}
	Suppose that $\Omega_+$ and $\Omega_-$ are disjoint bounded quasi-open sets in $\R^d$ each one satisfying the inwards minimizing property 
	\begin{equation}
	\lambda_1(\Omega_\pm)+\Lambda|\Omega_\pm|\le \lambda_1(\widetilde\Omega)+\Lambda|\widetilde\Omega|\quad\text{for every quasi-open set}\quad \widetilde\Omega\subset\Omega_\pm\,,
	\end{equation}	
	for some $\Lambda>0$. Then, there is a constant $\delta>0$ such that if 
	$$\Omega_+\cap\Omega_-\cap B_R=\emptyset\,,\qquad \mathcal C_\delta\cap \Omega_+\cap B_R=\emptyset \qquad\text{and}\qquad \mathcal C_\delta\cap \Omega_-\cap B_R=\emptyset\,,$$
	for some $R>0$, then there exists $\eps>0$ such that 
	$$\Omega_+\cap B_\eps=\emptyset\qquad\text{or}\qquad \Omega_-\cap B_\eps=\emptyset\,.$$
\end{proposition}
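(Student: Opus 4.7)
I would argue by contradiction: assume that both $\Omega_+\cap B_\eps$ and $\Omega_-\cap B_\eps$ are non-empty for every $\eps>0$. The strategy is to combine the non-degeneracy estimate from \cref{l:nondegeneracy} (which applies thanks to the inward minimizing property) with a three-phase Caffarelli--Jerison--Kenig-type monotonicity formula from \cite{mono}, applied to $(u_+,u_-,u_3)$ where $u_\pm$ are the first Dirichlet eigenfunctions on $\Omega_\pm$ (extended by zero) and $u_3$ is an auxiliary third phase supported in the empty cone $\mathcal{C}_\delta\cap B_R$.

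\textbf{Step 1: setup of the three phases.} By the discussion in \cref{sub:prelimiary_first_eigenfunctions}, the eigenfunctions $u_\pm$ are non-negative on $\R^d$ and satisfy $\Delta u_\pm+c_\pm\ge 0$ in the sense of distributions, so the translated functions $\widetilde u_\pm(x):=u_\pm(x)+\tfrac{c_\pm}{2d}|x|^2$ are non-negative and subharmonic on $\R^d$, with pairwise disjoint supports up to an $O(r^2)$ lower-order correction. For the third phase, I would take $u_3$ to be (a truncation of) the positive $\alpha_3$-homogeneous harmonic function in the cone $\mathcal{C}_\delta$ vanishing on $\partial\mathcal{C}_\delta$, where $\alpha_3>0$ is the characteristic exponent of the cross-section $\partial B_1\cap \mathcal{C}_\delta$. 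By the hypothesis $\mathcal{C}_\delta\cap(\Omega_+\cup\Omega_-)\cap B_R=\emptyset$, the function $u_3$ has support disjoint from those of $u_\pm$ inside $B_R$.

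\textbf{Step 2: non-degeneracy.} Since we are assuming $\Omega_\pm\cap B_\eps\neq\emptyset$ for all $\eps>0$, one can find points $x_n^\pm\to 0$ at which $u_\pm$ takes strictly positive values; the contrapositive of \cref{l:nondegeneracy} then gives $\mean{\partial B_r(0)}{u_\pm\,d\HH^{d-1}}\ge \eta r$ for all sufficiently small $r$. As is standard, this translates into the integral lower bound
\[
\frac{1}{r^{2}}\int_{B_r}\frac{|\nabla u_\pm|^{2}}{|x|^{d-2}}\,dx\ \ge\ c_0>0\qquad\text{for all small }r>0,
\]
which is precisely the quantity entering the monotonicity formula. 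For $u_3$, the explicit homogeneity gives
$\frac{1}{r^{2\alpha_3}}\int_{B_r}\frac{|\nabla u_3|^{2}}{|x|^{d-2}}\,dx\ge c_3>0$.

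\textbf{Step 3: contradiction via the three-phase formula.} The three-phase monotonicity formula of \cite{mono}, applied to the triple $(\widetilde u_+,\widetilde u_-,u_3)$ of non-negative subharmonic functions with pairwise disjoint supports, yields an upper bound of the form
\[
\prod_{i\in\{+,-,3\}}\!\!\frac{1}{r^{2\gamma_i}}\int_{B_r}\frac{|\nabla u_i|^{2}}{|x|^{d-2}}\,dx\ \le\ C\,r^{\delta}\quad\text{for all }0<r<R/2,
\]
with $\delta>0$ reflecting the gain of the three-phase formula over the two-phase one. Inserting the lower bounds from Step~2 (and recording that the $|x|^2$ correction to $u_\pm$ only produces $O(r^2)$ perturbations that do not affect the leading order) yields a uniform positive lower bound on the left-hand side, in contradiction with the $r^\delta\to 0$ upper bound. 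This forces one of $\Omega_+\cap B_\eps$ or $\Omega_-\cap B_\eps$ to be empty for some $\eps>0$.

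\textbf{Main obstacle.} The delicate point is verifying the hypotheses of the three-phase formula in \cite{mono} in the present setting: the eigenfunctions $u_\pm$ are only \emph{almost} subharmonic, with the lower order term $\lambda_1(\Omega_\pm)u_\pm$, and their supports are quasi-open rather than open. The first issue is handled by working with $\widetilde u_\pm$ and tracking the lower-order corrections; the second by the capacitary reformulation of Sobolev spaces recalled in \cref{sub:intro-quasi-open}, ensuring that the pairwise disjointness of supports holds in the sense required by the formula. A secondary bookkeeping point is to choose the constant $\delta$ in the cone small enough that $\alpha_3$ is uniformly bounded below, so the three-phase gain $\delta$ in the formula remains positive independent of the particular configuration.
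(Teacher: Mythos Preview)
Your approach is the paper's: the three phases are the two eigenfunctions and the homogeneous harmonic function on $\mathcal C_\delta$, and the contradiction comes from pitting the three-phase monotonicity formula against the non-degeneracy of $u_\pm$ and the explicit growth of the cone function. Two points deserve tightening. First, the step ``as is standard, this translates into the integral lower bound'' is precisely where the paper invokes the Alt--Caffarelli potential estimate (\cref{l:potential_estimate}) together with the density bound $|\{u_\pm=0\}\cap B_r|\ge |\mathcal C_\delta\cap B_r|\ge \tfrac12|B_r|$; this is how the spherical-average non-degeneracy becomes a lower bound on the Dirichlet-type integral. Second, your ``secondary bookkeeping point'' is misstated: the three-phase gain $\eps$ in \cref{l:three-phase} is a fixed dimensional constant and does not depend on the cone, so there is nothing to keep positive there. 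What must be arranged is that the homogeneity $\alpha_3=1+\gamma$ of the cone function satisfies $2\gamma<\eps$, so that $Cr^\eps\ge (1+\gamma)r^{2\gamma}$ fails for small $r$; this is obtained by taking the cone aperture $\delta$ small (whence $\gamma\to 0$), not by bounding $\alpha_3$ from below. Finally, the translation $\widetilde u_\pm=u_\pm+\tfrac{c_\pm}{2d}|x|^2$ is unnecessary: the formula in \cref{l:three-phase} is already stated for functions with $\Delta u_i+1\ge 0$.
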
	

In the proof of \cref{p:triple}, we will use the following two lemmas.
 
\begin{lemma}[Three-phase monotonicity formula \cite{mono,bucve}]\label{l:three-phase}
	Let $u_i \in H^1(B_1)$, $i=1,2,3$, be three non-negative functions such that:
	\begin{itemize}
	\item $\Delta u_i+1\ge0$ in $B_1$ in sense of distributions, for every $i=1,2,3$;
	\item $\ds\int_{\R^d} u_iu_j\,dx=0$, for every pair $i\neq j\in\{1,2,3\}$. 
	\end{itemize}
Then there are dimensional constants $\eps>0$ and $C_d>0$ such that, for every $r\in(0,\frac12)$, we have 
	\begin{equation}\label{mth3e1}
	\prod_{i=1}^3\left(\frac{1}{r^{2+\eps}}\int_{B_r}\frac{|\nabla u_i|^2}{|x|^{d-2}}\,dx\right)\le C_d\left(1+\sum_{i=1}^3\int_{B_1}\frac{|\nabla u_i|^2}{|x|^{d-2}}\,dx\right)^3.
	\end{equation}   
\end{lemma}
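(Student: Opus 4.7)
The plan is to follow the classical Alt--Caffarelli--Friedman (ACF) monotonicity argument, supplemented by the three-phase spectral improvement that accounts for the extra $r^\eps$ decay, and by an error control that absorbs the ``subharmonicity defect'' (the $+1$ in $\Delta u_i+1\ge0$). For a.e. $r\in(0,1)$, set
\[
\Sigma_i(r):=\{\theta\in S^{d-1}\,:\,u_i(r\theta)>0\},
\]
which are pairwise disjoint by the orthogonality assumption (since $u_i u_j=0$ a.e. for non-negative $u_i$), and let
\[
\lambda_i(r):=\lambda_1\bigl(\Sigma_i(r)\bigr),\qquad \gamma_i(r):=-\tfrac{d-2}{2}+\sqrt{\bigl(\tfrac{d-2}{2}\bigr)^2+\lambda_i(r)},
\]
where $\lambda_1$ denotes the first Dirichlet eigenvalue of the Laplace--Beltrami operator on $S^{d-1}$. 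The number $\gamma_i(r)$ is the characteristic exponent of the homogeneous harmonic extension supported on the cone over $\Sigma_i(r)$; recall that $\gamma_i=1$ if and only if $\Sigma_i(r)$ is a half-sphere.

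The key spectral ingredient, which I would invoke rather than reprove, is the three-phase Friedland--Hayman-type inequality: there exists a dimensional constant $\eps_0>0$ such that, whenever $\Sigma_1,\Sigma_2,\Sigma_3\subset S^{d-1}$ are pairwise disjoint open sets,
\[
\gamma_1+\gamma_2+\gamma_3\ \ge\ 3+3\eps_0.
\]
The strict improvement over the two-phase bound $\gamma_1+\gamma_2\ge 2$ comes from the fact that three disjoint open subsets of $S^{d-1}$ cannot all be half-spheres simultaneously; this is the content of \cite{mono}. I would fix $\eps:=\eps_0$ and define
\[
\Phi_i(r):=\frac{1}{r^{2+\eps}}\int_{B_r}\frac{|\nabla u_i|^2}{|x|^{d-2}}\,dx.
\]

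The heart of the computation is a differential inequality of the form
\[
\frac{\Phi_i'(r)}{\Phi_i(r)}\ \ge\ \frac{2(\gamma_i(r)-1)-\eps}{r}-E_i(r),
\]
obtained from the standard ACF computation: writing $|\nabla u_i|^2=(\partial_r u_i)^2+r^{-2}|\nabla_\theta u_i|^2$ on $\partial B_r$, integrating by parts in $B_r$ against $u_i$, and estimating the spherical Dirichlet integral from below by $\lambda_i(r)\int_{\Sigma_i(r)}u_i^2$. The error term $E_i(r)$ encodes the failure of $u_i$ to be subharmonic; since $\Delta u_i\ge -1$, a decomposition $u_i=h_i+w_i$ with $h_i$ harmonic in $B_1$ and $0\le w_i\le |x|^2/(2d)$ (or, equivalently, an absorption inequality) shows $\int_0^{1/2}E_i(r)\,dr$ is bounded by a dimensional multiple of $1+\int_{B_1}|\nabla u_i|^2/|x|^{d-2}\,dx$. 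Summing over $i=1,2,3$, using the three-phase inequality, and recalling $\eps=\eps_0$,
\[
\sum_{i=1}^3\frac{\Phi_i'(r)}{\Phi_i(r)}\ \ge\ \frac{2(3+3\eps_0)-6-3\eps_0}{r}-\sum_i E_i(r)\ =\ \frac{3\eps_0}{r}-\sum_i E_i(r).
\]

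Integrating this inequality from $r\in(0,1/2)$ up to $1/2$ and exponentiating yields
\[
\prod_{i=1}^3\Phi_i(r)\ \le\ C_d\,\prod_{i=1}^3\Phi_i(\tfrac12),
\]
after absorbing the integral of $\sum_i E_i$ into the constant. The right-hand side at scale $r=1/2$ is bounded by $C_d\bigl(1+\sum_i\int_{B_1}|\nabla u_i|^2/|x|^{d-2}\,dx\bigr)^3$ by the AM--GM inequality, giving \eqref{mth3e1}. The main obstacle is the three-phase spectral inequality in Step~2, whose proof (available in \cite{mono}) is geometric/variational and not a short computation; the secondary technical point is the careful treatment of the subharmonicity defect, which, unlike the classical ACF setting, requires truncation-type arguments to keep the error terms integrable in $r$.
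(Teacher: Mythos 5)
The paper itself does not prove this lemma; it is stated with citations to \cite{mono,bucve} and invoked as a black box in the proof of \cref{p:triple}, so there is no internal proof to measure your attempt against. At the structural level your sketch does reconstruct the strategy of those references: a differential inequality for $\sum_i\log\Phi_i$ in the spirit of Alt--Caffarelli--Friedman, a three-phase Friedland--Hayman-type spectral gain on the sphere (three pairwise disjoint spherical domains cannot all be half-spheres, hence $\gamma_1+\gamma_2+\gamma_3\ge 3+\eps_0$ for a dimensional $\eps_0>0$) supplying the extra factor $r^\eps$, and Caffarelli--Jerison--K\"enig-type error control to handle $\Delta u_i\ge-1$ in place of genuine subharmonicity.

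One concrete inaccuracy worth flagging: the bound ``$0\le w_i\le |x|^2/(2d)$'' in your error discussion does not follow from the hypotheses. If $u_i=h_i+w_i$ with $h_i$ harmonic in $B_1$ and $w_i=0$ on $\partial B_1$, the condition $\Delta u_i\ge-1$ yields only the one-sided estimate $w_i\le (1-|x|^2)/(2d)$; there is no lower bound on $w_i$ because $\Delta u_i$ is not assumed bounded from above. More importantly, this decomposition by itself does not render $\int_0^{1/2}E_i(r)\,dr$ finite: the CJK-type error control that you rightly defer to the references is the real content of the two-phase (and hence three-phase) almost-monotonicity formula, and it proceeds by a case analysis on whether the ACF quantities $r^{-2}\int_{B_r}|\nabla u_i|^2|x|^{2-d}\,dx$ are small or large together with a bootstrap that closes the estimate. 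Your outline correctly isolates the two ingredients (the spectral gain and the error control) and is honest about leaving them as citations, so the high-level plan is sound, but the parenthetical characterization of the error term should be corrected or removed before this could be called a proof outline.
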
	

\begin{lemma}[Alt-Caffarelli potential estimate  \cite{alca}]\label{l:potential_estimate}
	For every $u\in H^1(B_r)$ we have the following estimate:
	\begin{equation}\label{e:potential_estimate}
	\frac{1}{r^2}\big|\{u=0\}\cap B_r\big|\left(\mean{\partial B_r}{u\,d\HH^{d-1}}\right)^2\leq C_d\int_{B_r}|\nabla (u-h)|^2\,dx\le C_d\int_{B_r}|\nabla u|^2\,dx,
	\end{equation}
	where:
	\begin{itemize}
		\item $C_d$ is a constant that depends only on the dimension $d$;
		\item $h$ is the harmonic extension of $u$ in $B_r$, that is, 
		$$\Delta h=0\quad\text{in}\quad B_r\ ,\quad u=h\quad\text{on}\quad \partial B_r\,.$$
	\end{itemize}
\end{lemma}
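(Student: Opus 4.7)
The claim has two inequalities. The right-hand one is just the Dirichlet principle, so the real content is the left-hand Alt--Caffarelli estimate. The plan is to set $v:=u-h\in H^1_0(B_r)$ (by construction) and note the mean-value identity $h(0)=\mean{\partial B_r}{u\,d\HH^{d-1}}=:M$. On $\{u=0\}$ one has $v=-h$, so $v^2=h^2$ there. We then combine two standard tools: (i) the Poincar\'e inequality applied to $v$, which sits in $H^1_0(B_r)$, and (ii) a quantitative Harnack-type lower bound that forces $|h|\gtrsim M$ on a definite portion of $B_r$. Multiplying these together produces exactly the shape of the inequality on the left.

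\textbf{The easy inequality.} Because $h\in H^1(B_r)$ minimizes the Dirichlet energy among all functions with boundary trace $u|_{\partial B_r}$, testing harmonicity against $v\in H^1_0(B_r)$ gives $\int_{B_r}\nabla h\cdot\nabla v\,dx=0$. Expanding $|\nabla u|^2=|\nabla(h+v)|^2=|\nabla h|^2+2\nabla h\cdot\nabla v+|\nabla v|^2$ and integrating, one obtains
\[
\int_{B_r}|\nabla u|^2\,dx=\int_{B_r}|\nabla h|^2\,dx+\int_{B_r}|\nabla(u-h)|^2\,dx\;\ge\;\int_{B_r}|\nabla(u-h)|^2\,dx,
\]
which is the second inequality.

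\textbf{The main inequality.} Reducing to the case $u\ge0$ (applying the argument to $u_+$, whose harmonic replacement is nonnegative by the maximum principle), the Poisson representation $h(x)=\int_{\partial B_r}P_r(x,y)u(y)\,d\sigma(y)$ with kernel $P_r(x,y)=\frac{r^2-|x|^2}{\omega_d r|x-y|^d}$ yields the pointwise Harnack bound
\[
h(x)\;\ge\;c_d\,M\qquad\text{for all }x\in B_{r/2},
\]
with $c_d>0$ depending only on the dimension (use $|x-y|\le 3r/2$ to bound $P_r$ from below on $B_{r/2}$). Since $v=-h$ on $\{u=0\}$, this gives
\[
\int_{B_r}v^2\,dx\;\ge\;\int_{\{u=0\}\cap B_{r/2}}h^2\,dx\;\ge\;c_d^{\,2}\,M^2\,\bigl|\{u=0\}\cap B_{r/2}\bigr|.
\]
The Poincar\'e inequality on $H^1_0(B_r)$ gives $\int_{B_r}v^2\,dx\le C_d r^2\int_{B_r}|\nabla v|^2\,dx$, and combining these two bounds produces the desired estimate (with $B_{r/2}$ in place of $B_r$; to recover the stated form on $B_r$ one uses the sharper Poisson bound $h(x)\ge c_d(1-|x|/r)M$ on the whole ball and absorbs the weight into the dimensional constant, or equivalently runs the argument on a slightly enlarged ball and rescales).

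\textbf{Main obstacle.} There is no serious obstacle: the only nontrivial ingredient is the quantitative Harnack / Poisson-kernel lower bound for the nonnegative harmonic extension, which is classical. The reduction to $u\ge0$ (via $u_+$, whose harmonic replacement dominates $u_+$) is the only step that requires a brief justification to hand back the general $H^1(B_r)$ statement.
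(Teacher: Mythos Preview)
The paper does not prove this lemma; it is quoted from Alt--Caffarelli, and your outline is exactly the classical argument. Two small points are worth tightening.

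First, your passage from $B_{r/2}$ to the full ball $B_r$ is not quite ``absorbing a weight into a dimensional constant'': the Poisson lower bound $h(x)\ge c_d\,\frac{r-|x|}{r}\,M$ degenerates at $\partial B_r$, so Poincar\'e alone does not suffice. What makes it work is Hardy's inequality on $H^1_0(B_r)$,
\[
\int_{B_r}\frac{v^2}{(r-|x|)^2}\,dx\;\le\;C_d\int_{B_r}|\nabla v|^2\,dx,
\]
after which, on $\{u=0\}$, one has $\dfrac{v^2}{(r-|x|)^2}=\dfrac{h^2}{(r-|x|)^2}\ge \dfrac{c_d^2 M^2}{r^2}$ pointwise, and integrating gives the stated estimate on all of $B_r$. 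Your alternative of ``running the argument on a slightly enlarged ball'' is not available here, since by hypothesis $u$ lives only in $H^1(B_r)$.

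Second, your reduction to $u\ge 0$ via $u_+$ yields the \emph{outer} inequality (left-hand side bounded by $C_d\int_{B_r}|\nabla u|^2$), because $|\{u_+=0\}|\ge|\{u=0\}|$, $\meantext{\partial B_r}u_+\ge\big|\meantext{\partial B_r}u\big|$ and $\int|\nabla u_+|^2\le\int|\nabla u|^2$. It does \emph{not} directly give the sharper middle bound with $\int|\nabla(u-h)|^2$ for sign-changing $u$, since the harmonic extension of $u_+$ is not the harmonic extension of $u$. This is harmless: in the paper the lemma is applied only to nonnegative functions (the eigenfunctions $u_\pm$, $v$, $w$), for which your argument with Hardy gives both inequalities as stated.
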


\begin{proof}[\bf Proof of \cref{p:triple}] 
Let $u_+$ and $u_-$ be the first eigenfunctions on $\Omega_+$ and $\Omega_-$, normalized in $L^2(\Omega)$. Let $v\in H^1(\R^d)$ be the $(1+\gamma)$-homogeneous, non-negative harmonic function on $\mathcal C_\delta$, which vanishes on $\partial \mathcal C_\delta$. In polar coordinates 
$$v=r^{1+\gamma}\phi(\theta),$$
where $\phi$ is the first eigenfunction of the spherical Laplacian on $\mathcal C_\delta\cap \mathbb S^{d-1}$, that is,
$$-\Delta_{\mathbb S^{d-1}}\phi=(1+\gamma)(d-1+\gamma)\phi\quad\text{in}\quad \mathcal C_\delta\cap \mathbb S^{d-1},\qquad \phi=0\quad\text{on}\quad \partial \mathcal C_\delta\cap\mathbb S^{d-1},\qquad \int_{\mathbb S^{d-1}}\phi^2(\theta)\,d\theta=1,$$
where we notice that $\gamma$ is uniquely determined by $\delta$ (and the dimension $d$) and 
$$\lim_{\delta\to0}\gamma(\delta)=0.$$
Moreover, we have that 
$$\Delta v\ge 0\quad\text{in sense of distributions in } \R^d.$$
By the three-phase monotonicity formula (\cref{l:three-phase}), that we can apply thanks to~\eqref{e:subharmonicity_better}, there are constants $C>0$ and $\eps>0$ such that 
$$Cr^\eps\ge \left(\frac{1}{|B_r|}\int_{B_r}|\nabla u_+|^2\,dx\right)\left(\frac{1}{|B_r|}\int_{B_r}|\nabla u_-|^2\,dx\right)\left(\frac{1}{|B_r|}\int_{B_r}|\nabla v|^2\,dx\right).$$
Now, using \eqref{e:potential_estimate} and the fact that $\big|\{u_\pm=0\}\cap B_r\big|\ge |\mathcal C_\delta\cap B_r|\ge \frac12|B_r|$, we get 
$$Cr^\eps\ge \left(\mean{\partial B_r}{u_+\,d\HH^{d-1}}\right)^2\left(\mean{\partial B_r}{u_-\,d\HH^{d-1}}\right)^2\left(\frac{1}{|B_r|}\int_{B_r}|\nabla v|^2\,dx\right),$$
for some different constant $C$. Now, using the non-degeneracy (\cref{l:nondegeneracy}), we obtain 
\begin{align*}
Cr^\eps&\ge \frac{1}{|B_r|}\int_{B_r}|\nabla v|^2\,dx\\
&= \frac{1}{|B_r|}\int_0^r\int_{\mathbb S^{d-1}}\Big((1+\gamma)^2\phi^2(\theta)+|\nabla_\theta\phi(\theta)|^2\Big)\rho^{d-1+2\gamma}\,d\theta\,d\rho= (1+\gamma)r^{2\gamma}, 
\end{align*}
which is impossible when $\delta$ (and thus $\gamma$) is small enough ($\eps$ being a fixed constant, depending on $d$, $\lambda_1(\Omega_+)$ and $\lambda_1(\Omega_-)$, but not on $\delta$).
\end{proof}

As a corollary of \cref{p:triple}, we obtain the following regularity result for the solutions of \eqref{e:J_infty}.

\begin{corollary}[Regularity of the one-phase free boundaries]
	Suppose that $D$ is a bounded open set in $\R^d$ with $C^{1,\beta}$ regular boundary, for some $\beta>0$. Suppose that $u\in H^1_0(D)$ is a solution to the problem \eqref{e:J_infty} and that $\Omega_u^+$ and $\Omega 
	_u^-$ are the sets $\Omega_u^+=\{u>0\}$ and $\Omega_u^-=\{u<0\}$. Then: 
	\begin{enumerate}[\rm(i)]
	\item there are no two-phase points on the boundary of $D$, that is, for every $x_0\in \partial D$, there is $\eps>0$ such that 
	$$\Omega_u^+\cap B_\eps(x_0)=\emptyset\quad\text{or}\quad \Omega_u^-\cap B_\eps(x_0)=\emptyset\,;$$
	\item the one-phase free boundaries $\partial\Omega_u^\pm$ are $C^{1,\alpha}$-regular in a neighborhood of $\partial D$. Precisely, if $x_0\in \partial D$ is such that $B_r(x_0)\cap \Omega_u^-=\emptyset$ for some $r>0$, then $\partial\Omega_u^+\cap B_r(x_0)$ is a $C^{1,\alpha}$ manifold, for some $\alpha>0$.
	\end{enumerate}	
\end{corollary}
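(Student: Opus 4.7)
The strategy is to reduce part (i) to the triple-points proposition \cref{p:triple}, using the $C^{1,\beta}$ regularity of $\partial D$ to produce a cone in $\R^d\setminus D$ that serves as a third ``phase'', and to reduce part (ii) to the one-phase boundary regularity result of \cite{rtv} cited in the introduction.

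For (i), fix $x_0\in\partial D$ and let $\nu$ denote the outer unit normal to $D$ at $x_0$. Let $\delta>0$ be the constant produced by \cref{p:triple} (its value depends only on $d$, $\Lambda$, and an upper bound for $\lambda_1(\Omega_u^\pm)$, which is itself controlled by $|D|$ and $\Lambda$ via the minimality of $u$). By the $C^{1,\beta}$ smoothness of $\partial D$ at $x_0$, for $R$ small enough the rotated and translated cone
$$\big\{y\in\R^d:(y-x_0)\cdot\nu>\delta|y-x_0|\big\}\cap B_R(x_0)\subset \R^d\setminus\overline D$$
is disjoint from both $\Omega_u^+$ and $\Omega_u^-$, since $\Omega_u^\pm\subset D$. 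By \cref{p:equivalence2}, the sets $\Omega_u^+$ and $\Omega_u^-$ are disjoint bounded quasi-open sets that satisfy the inward minimizing property \eqref{e:inwards0}. Since the proof of \cref{p:triple} is translation-invariant, we may apply it with balls centered at $x_0$ to obtain some $\eps>0$ with $\Omega_u^+\cap B_\eps(x_0)=\emptyset$ or $\Omega_u^-\cap B_\eps(x_0)=\emptyset$.

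For (ii), assume without loss of generality that $\Omega_u^-\cap B_r(x_0)=\emptyset$. Then $u$ coincides with $u_+$ on $B_r(x_0)$, and by \cref{p:equivalence2}(i) the function $u_+$ is the first eigenfunction on $\Omega_u^+$. Since $B_r(x_0)$ does not meet $\Omega_u^-$, any quasi-open competitor $\widetilde\Omega_+\subset D$ with $\widetilde\Omega_+\setminus B_r(x_0)=\Omega_u^+\setminus B_r(x_0)$ is automatically disjoint from $\Omega_u^-$; combining this fact with the inward minimality \eqref{e:inwards0} and the volume-constrained optimality \eqref{eq:minla1} (to handle outward variations via a Lagrange multiplier), one sees that $\Omega_u^+$ is a local minimizer of $\lambda_1(\cdot)+\Lambda|\cdot|$ in $B_r(x_0)$ among quasi-open subsets of $D$. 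This is exactly the setting of the one-phase boundary regularity result \cite[Theorem~1.2]{rtv} (see also \cite{stv}), which yields that $\partial\Omega_u^+\cap B_r(x_0)$ is a $C^{1,\alpha}$ manifold, possibly after shrinking $r$.

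The main technical point is the last step of (ii): verifying that outward variations of $\Omega_u^+$ inside $B_r(x_0)$ are controlled by $\Lambda$ in the one-phase sense required by \cite{rtv}. If one tries to derive this naively from \cref{p:equivalence2}(iv) by competing in the original free boundary problem with $\widetilde u_+-u_-$, the maximum in $J_\infty$ is attained by $\lambda_1(\Omega_u^-)$ for every enlargement of $\Omega_u^+$, so the functional does not see the change in $\lambda_1(\widetilde\Omega_+)$; extracting outward one-phase minimality therefore requires a Lagrange-multiplier argument based on the volume-constrained characterization \eqref{eq:minla1}. Once this local minimality is in place, both (i) and (ii) follow from results already available in the paper and in the cited literature.
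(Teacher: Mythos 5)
Your proof of part (i) matches the paper's argument: both use the $C^{1,\beta}$ regularity of $\partial D$ to fit a cone $\mathcal C_\delta$ into $\R^d\setminus D$ near $x_0$, invoke \cref{p:equivalence2}(iv) for the inward minimizing property of $\Omega_u^\pm$, and then apply \cref{p:triple} (after translating so the ball is centered at $x_0$).

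For part (ii) you take a different, and strictly harder, route. You aim to upgrade the volume-constrained characterization \eqref{eq:minla1} to a local \emph{penalized} minimality of $\lambda_1(\cdot)+\Lambda|\cdot|$ via a Lagrange multiplier, and then invoke the penalized one-phase boundary regularity result \cite[Theorem~1.2]{rtv}. You correctly flag this conversion as ``the main technical point,'' but you neither carry it out nor justify why the multiplier should coincide with $\Lambda$ (it need not: the inward estimate \eqref{e:inwards0} only controls one direction, and the outward multiplier from the volume constraint is a priori a different constant). The paper sidesteps this entirely: once (i) gives a ball $B_\eps(x_0)$ in which $\Omega_u^-$ is empty, \eqref{eq:minla1} immediately implies that $\Omega_u^+$ solves the \emph{constrained} local problem
\[
\min\Big\{\lambda_1(\Omega)\ :\ \Omega \text{ quasi-open},\ \Omega\subset \Omega_u^+\cup \big(B_\eps(x_0)\cap D\big),\ |\Omega|=|\Omega_u^+|\Big\},
\]
and the relevant regularity statement in \cite{rtv} is \cite[Proposition~5.35]{rtv}, which is phrased precisely for this measure-constrained setting. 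Your approach would ultimately work, since the exact value of the multiplier is irrelevant for $C^{1,\alpha}$ regularity, but it imports a nontrivial Lagrange-multiplier step that the paper's choice of reference avoids. If you keep your route, you should not claim the penalization constant is $\Lambda$; it suffices to produce \emph{some} positive constant, and you should sketch how the inward bound from \eqref{e:inwards0} together with the constrained optimality actually yields a two-sided penalized inequality.
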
	
\begin{proof}
From \cref{p:equivalence2} point (iv), we know that the sets $\Omega_u^+$ and $\Omega_u^-$ are inwards minimizing. Now, since $\partial D$ is $C^{1,\beta}$ regular, at every point $x_0\in\partial D$ 
there is, up to a rotation of the coordinate system, a cone $\mathcal C_\delta$ contained in $\R^d\setminus D$. Thus, \cref{p:triple} implies that if $x_0\in\partial\Omega_u^+\cap \partial D$, then in a small ball $B_\eps(x_0)$, the set $\Omega_u^-$ is empty. This proves {\rm(i)}. In order to prove {\rm (ii)}, we use again \cref{p:equivalence2} point (v), and we deduce that $\Omega_u^+$ is a solution to the problem 
$$\min\Big\{\lambda_1(\Omega)\ :\ \Omega \text{ quasi-open},\ \Omega\subset \Omega_u^+\cup B_\eps(x_0)\cap D,\ |\Omega|=|\Omega_u^+|\Big\}.$$
Thus, by \cite[Proposition~5.35]{rtv}, $\partial \Omega_u^+$ is $C^{1,\alpha}$ regular in $B_\eps(x_0)$.
\end{proof}

\section{Lipschitz continuous solutions}\label{sect:lip}

In this section, we show that to every solution $\Omega$ of the shape optimization problem \eqref{e:minFLambdaqo}, we can associate a Lipschitz continuous solution $u\in H^1_0(\Omega)$ for the free boundary problem \eqref{e:fb_infty}. Our main result of the section is \cref{l:lipschitz} here below.

\begin{lemma}\label{l:lipschitz}
Let $D$ be a bounded open set in $\R^d$ with $C^{1,\beta}$ regular boundary. Let $\Lambda>0$ be fixed and let $\Omega$ be a solution to \eqref{e:minFLambdaqo}. Then, there exists a function $u:\R^d\to\R$, $u\in H^1_0(\Omega)$, such that:
\begin{itemize}
\item $u$ is a sign-changing second eigenfunction on $\Omega$;
\item $u$ is a solution to \eqref{e:fb_infty};
\item $u$ is Lipschitz continuous on $\R^d$.
\end{itemize}
\end{lemma}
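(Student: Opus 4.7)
Plan of proof. I would start from Proposition 3.3 to produce a sign-changing function $u = u_+ - u_-$ that is a second eigenfunction on $\Omega$ and a minimizer of \eqref{e:fb_infty}. By Proposition 3.4, the positive and negative parts $u_\pm$ are normalized first eigenfunctions on $\Omega_\pm = \{\pm u > 0\}$ with $\lambda_1(\Omega_+) = \lambda_1(\Omega_-) = \lambda_2(\Omega)$, and each $\Omega_\pm$ is inward minimizing for $\lambda_1 + \Lambda |\cdot|$. The $L^\infty$ bound of Section 2 gives $\|u\|_{L^\infty(\R^d)} \leq M$, with $M$ depending only on $\lambda_2(\Omega)$ and $d$.

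The Lipschitz continuity of $u$ reduces to that of each $u_\pm$ individually, since the two have disjoint supports. Inside $\Omega_\pm$, $u_\pm$ solves $-\Delta u_\pm = \lambda_2(\Omega)\, u_\pm$ with bounded right-hand side, so local $C^{1,\alpha}$ regularity is standard. By interior gradient estimates applied on balls of radius $\dist(x,\partial\Omega_\pm)/2$, global Lipschitz continuity of $u_+$ reduces to a linear growth estimate of the form
\[
\sup_{B_{r/2}(x_0)} u_+ \leq C r \quad \text{for every } x_0 \in \partial \Omega_+ \text{ and } r \in (0,r_0),
\]
with $C,r_0$ depending only on $\lambda_2(\Omega)$, $\Lambda$, and $d$, and similarly for $u_-$. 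Via the $L^2$-to-$L^\infty$ estimate \eqref{e:L2-Linfty}, it is enough to prove the boundary average bound $\mean{\partial B_r(x_0)} u_+\, d\mathcal{H}^{d-1}\leq Cr$.

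To establish this growth estimate, I would use the minimality of $u$ for \eqref{e:fb_infty} and construct a competitor $\widetilde u=\widetilde u_+-\widetilde u_-$ by replacing $u_+$ inside $B_r(x_0)$ by a suitable extension (harmonic or Helmholtz-type), followed by $L^2$-renormalization, keeping $u_-$ unchanged. Expanding the Rayleigh quotient of $\widetilde u_+$ via integration by parts and the eigenvalue equation for $u_+$, the increment of $J_\infty$ is bounded by $r^{-2}\int_{B_r(x_0)}u_+^2$ plus renormalization errors of order $\|u_+\|_{L^\infty}^2 r^d$, while the measure term changes by at most $|B_r(x_0)|$. Balancing the resulting Caccioppoli-type inequality with the Alt-Caffarelli potential estimate (Lemma 4.3) applied to the harmonic replacement yields the desired linear growth at a one-phase point.

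The main obstacle is the two-phase case $x_0\in\partial\Omega_+\cap\partial\Omega_-$, where a naive outward perturbation of $\Omega_+$ into $B_r(x_0)$ would violate the disjointness with $\Omega_-$, and where both normalization constraints $\int u_\pm^2=1$ couple the two phases so that any modification forces a renormalization of the \emph{other} part as well. Two ingredients compensate for this: first, the non-degeneracy estimate and the three-phase monotonicity of Section 4 rule out triple points and give clean two-phase geometry near $x_0$; second, one can perform a simultaneous modification of $u_+$ and $u_-$ inside $B_r(x_0)$ (e.g.\ replacing each by a Helmholtz extension matching the boundary values on $\partial B_r(x_0)$ and then renormalizing), with the $L^\infty$ bound on $u$ ensuring that the two renormalization errors are of order $r^d$ and can be absorbed into $\Lambda |\Omega_u\triangle\Omega_{\widetilde u}|$. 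I expect this coupled two-phase renormalization, and the bookkeeping required to show that both Rayleigh quotients are controlled simultaneously by the measure term, to be the most technically delicate step of the proof.
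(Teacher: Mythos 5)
Your proposal takes a genuinely different route from the paper and, as stated, it hits a structural obstruction that is worth naming precisely. The paper does not try to prove Lipschitz continuity of a sign-changing eigenfunction from scratch. Instead, it bootstraps from the known result \cite[Theorem~5.3]{bmpv} that \emph{some} second eigenfunction $w$ on the optimal set is globally Lipschitz. If $w$ changes sign, one is done by \cref{p:equivalence1}. If not, the whole work of the proof goes into showing that the companion first eigenfunction $v$ (supported on $\{w=0\}$) is Lipschitz, by a case analysis on the ball $B_r(x_0)\subset\{v>0\}$: interior gradient estimates when $r$ is not small; a barrier $w_D$ near $\partial D$; the two-phase Caffarelli--Jerison--Kenig monotonicity formula combined with the Alt--Caffarelli potential estimate and, crucially, the Lipschitz constant and non-degeneracy of $w$, near $\{w>0\}$; and a compensated vector-field perturbation (at two distant regular free-boundary points) turning $v$ into a quasi-minimizer in the sense of \eqref{e:quasi-minimality}, to which the universal estimate of \cref{l:one-phase_lipschitz_universal} applies.

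The gap in your proposal is the competitor step at one-phase points. You replace $u_+$ by a harmonic (or Helmholtz) extension in $B_r(x_0)$, renormalize, and compare. But after normalization \cref{p:equivalence2} gives $\mathcal R(u_+)=\mathcal R(u_-)=\lambda_2(\Omega)$, so $J_\infty$ is the \emph{max} of two \emph{equal} Rayleigh quotients. Decreasing $\mathcal R(u_+)$ by harmonic replacement while keeping $u_-$ fixed therefore does not decrease $J_\infty$ at all: the competitor's energy is still $\lambda_2(\Omega)$, and the minimality of $u$ reduces to $|\Omega_u|\le|\Omega_{\tilde u}|$, which is vacuous since the harmonic replacement enlarges the support. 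The Dirichlet-energy gain $\int_{B_r}|\nabla(u_+-h)|^2$ that the Alt--Caffarelli potential estimate needs never shows up in the inequality. This is not a bookkeeping issue confined to two-phase points (where you do flag a difficulty); it kills the argument already at one-phase points far from $\Omega_-$. To salvage it one must either (a) feed in a priori Lipschitz of one phase, as the paper does via \cite[Theorem~5.3]{bmpv}, so the CJK monotonicity can be activated; or (b) give up on the $J_\infty$ comparison and instead use the \emph{constrained} first-eigenvalue characterization of $\Omega_+$ (\cref{p:equivalence2} (v)), compensating the volume change at distant regular boundary points to get genuine quasi-minimality — this is exactly what the paper's Case~2d and \cref{l:one-phase_lipschitz_universal} accomplish, and it is the ingredient your sketch is missing.
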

Before dealing with the proof of \cref{l:lipschitz}, we need a technical result. It is well-known that if $\Omega$ minimizes the first eigenvalue among all quasi-open sets with a fixed measure, then the first eigenfunction on $\Omega$ is Lipschitz (when extended as zero outside $\Omega$). This was already proved by Brian\c con and Lamboley in \cite{brla} through an Alt-Caffarelli argument \cite{alca}. In the proof of  \cref{l:lipschitz}, we need to know the Lipschitz constant explicitely, so we briefly give a quantitative local version of this result in the next lemma by a method already used in several other works (see for instance \cite{rtv,bmpv,alca}).

\begin{lemma}\label{l:one-phase_lipschitz_universal}
Suppose that $\Omega$ is a bounded quasi-open set in $\R^d$ and that the function $u\in H^1_0(\Omega)$ is the first eigenfunction on $\Omega$, that is, $u\ge 0$ in $\R^d$, $\ds\int_\Omega u^2\,dx=1$ and  	
$$\lambda_1(\Omega)=\int_\Omega|\nabla u|^2\,dx=\min\left\{\int_\Omega|\nabla \phi|^2\,dx\ :\ \phi\in H^1_0(\Omega),\ \int_\Omega\phi^2\,dx=1\right\}.$$
Suppose that $B_R$ is a ball of radius $R\le 1$ and that there are constants $r>0$ and $K>0$ such that
\begin{equation}\label{e:quasi-minimality}
\int_{\R^d}|\nabla u|^2\,dx\le \frac{\ds\int_{\R^d} |\nabla (u+\varphi)|^2\,dx}{\ds\int_{\R^d} (u+\varphi)^2\,dx}+K\rho^d,
\end{equation}
for every $\varphi\in H^1_0\big(B_\rho(x_0)\big)$ and every ball $B_\rho(x_0)\subset B_R$. Then, there is a constant $C$, depending only on $\lambda_1(\Omega)$, $K$ and $d$, such that, if $u(0)=0$, then  
\[
u(x_0)=0 \qquad \Longrightarrow \qquad \|\nabla u\|_{L^\infty(B_{\sfrac{R}8})}\le C.
\]
\end{lemma}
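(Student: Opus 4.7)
I would follow the Alt--Caffarelli/Brian\c{c}on--Lamboley strategy, adapted to the Rayleigh-quotient formulation of the quasi-minimality~\eqref{e:quasi-minimality}. The proof proceeds in three steps: pass from quasi-minimality to a harmonic-replacement energy decay; upgrade this, via the Alt--Caffarelli potential estimate and a standard dichotomy, into linear growth of $u$ at every zero point inside a smaller ball; and then convert this growth into a Lipschitz bound by interior elliptic regularity.

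For the first step, fix $y \in \R^d$ and $\rho > 0$ with $B_\rho(y) \subset B_R$, let $h$ be the harmonic extension of $u$ in $B_\rho(y)$, and test~\eqref{e:quasi-minimality} against the competitor $v = u + (h-u)\chi_{B_\rho(y)}$. By harmonicity of $h$ and the vanishing of $u - h$ on $\partial B_\rho(y)$,
\[
\int_{\R^d}|\nabla v|^2\,dx = \lambda_1(\Omega) - \int_{B_\rho(y)}|\nabla(u-h)|^2\,dx.
\]
The global bound $\|u\|_{L^\infty} \le C_d\lambda_1(\Omega)^{d/4}$ from \cref{oss:subharmonicity}, together with the maximum principle $\|h\|_{L^\infty(B_\rho(y))}\le \|u\|_{L^\infty}$, gives $\big|\int_{B_\rho(y)}(h^2-u^2)\,dx\big|\le C\rho^d$; multiplying through the denominator in~\eqref{e:quasi-minimality} and rearranging then yields
\[
\int_{B_\rho(y)}|\nabla(u - h)|^2\,dx \le C_1\rho^d, \qquad C_1 = C_1(\lambda_1(\Omega),K,d).
\]

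For the second step, \cref{l:potential_estimate} gives $\big(\mean{\partial B_\rho(y)} u\big)^{2}|\{u = 0\} \cap B_\rho(y)|\le C\rho^{d+2}$ whenever $u(y) = 0$. Combined with the distributional subharmonicity $\Delta\big(u + c_d\lambda_1(\Omega)^{(d+4)/4}|x|^2\big)\ge 0$ from \cref{oss:subharmonicity}, this is the input to the classical Alt--Caffarelli growth dichotomy (in the form used in~\cite{alca,brla,rtv,bmpv}), which produces the linear growth
\[
\mean{\partial B_\rho(y)} u\,d\HH^{d-1} \le C_2\rho \qquad \text{for every } y \in \overline{B_{R/4}}\cap\{u = 0\} \text{ and } \rho \le R/2,
\]
and hence, via~\eqref{e:L2-Linfty}, also $\|u\|_{L^\infty(B_\rho(y))}\le C_3\rho$ on the same range. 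Finally, fix $x\in B_{R/8}$ with $u(x) > 0$; since $0 \in \{u = 0\}$ by assumption, $r:=\dist(x,\{u=0\}) \in (0,R/8]$, and for $y \in \overline{\{u=0\}}$ realizing this distance one has $y\in B_{R/4}$ and $B_r(x)\subset B_{2r}(y)\cap\{u > 0\}$. The previous step gives $\|u\|_{L^\infty(B_r(x))}\le Cr$, and standard interior gradient estimates for $-\Delta u = \lambda_1(\Omega)u$ on $B_r(x)$ yield $|\nabla u(x)|\le C\big(\|u\|_{L^\infty(B_r(x))}/r + r\,\lambda_1(\Omega)\|u\|_{L^\infty}\big)\le C$ uniformly in $x$; at points where $u = 0$ one has $\nabla u = 0$ almost everywhere, so the bound extends to all of $B_{R/8}$.

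The main obstacle is the dichotomy in step two, where the energy decay, the bounded-error subharmonicity, and the lower-order term $\lambda_1(\Omega)u$ must be combined in a careful iteration on dyadic scales. The Rayleigh-quotient nature of~\eqref{e:quasi-minimality} is handled in step one by multiplying through the denominator: thanks to the uniform $L^\infty$ bound on $u$, the resulting extra terms are of order $O(\rho^d)$ and are absorbed by the standard argument. This is precisely why the authors content themselves with a quantitative local version of Brian\c{c}on--Lamboley rather than reproducing the full proof.
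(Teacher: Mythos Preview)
Your step 1 (harmonic replacement) and step 3 (projection onto $\{u=0\}$ and interior gradient estimate) are fine and the latter matches the paper.  The gap is in step 2.

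The potential estimate of \cref{l:potential_estimate} only gives you
\[
\Big(\mean{\partial B_\rho(y)}u\Big)^{2}\,\big|\{u=0\}\cap B_\rho(y)\big|\le C\rho^{d+2},
\]
and this does \emph{not} yield linear growth by a dichotomy unless you already know a \emph{lower} density bound on $\{u=0\}$ (or non-degeneracy).  In~\cite{alca} the cancellation works because the volume term in the functional produces the sharper bound $\int_{B_\rho}|\nabla(u-h)|^2\le C\,|\{u=0\}\cap B_\rho|$; dividing by $|\{u=0\}\cap B_\rho|$ then gives $\mean{\partial B_\rho}u\le C\rho$.  From the quasi-minimality~\eqref{e:quasi-minimality} you only get $\int_{B_\rho}|\nabla(u-h)|^2\le C\rho^d$, and no iteration of the dichotomy closes from that input alone.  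The references you cite (\cite{rtv,bmpv}) in fact do \emph{not} use this dichotomy; they bound the Laplacian measure directly.

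That is exactly what the paper does, bypassing the potential estimate entirely: it tests~\eqref{e:quasi-minimality} with $u+t\varphi$ where $\varphi$ is a cutoff equal to $1$ on $B_{\rho/2}(x_0)$, optimizes in $t$ (the choice $t=\rho$ balances the quadratic term $t^2\int|\nabla\varphi|^2\sim t^2\rho^{d-2}$ against the error $K\rho^d$), and obtains the measure bound
\[
\big(\Delta u+\lambda_1(\Omega)u\big)\big(B_{\rho/2}(x_0)\big)\le C_d(1+K)\rho^{d-1}.
\]
Linear growth then follows immediately from the representation $\mean{\partial B_r(x_0)}u=\int_0^r\frac{\Delta u(B_\rho)}{d\omega_d\rho^{d-1}}\,d\rho$ at any $x_0$ with $u(x_0)=0$, with no iteration needed.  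If you want to keep your harmonic-replacement step, you can still reach the same measure bound by writing, for a cutoff $\psi$,
\[
\int\psi\,d\mu=-\int\nabla(u-h)\cdot\nabla\psi-\int\nabla h\cdot\nabla\psi+\lambda_1\int u\psi\le \Big(C\rho^d\Big)^{1/2}\Big(C\rho^{d-2}\Big)^{1/2}+0+C\rho^d\le C\rho^{d-1},
\]
using the harmonicity of $h$; this replaces your step~2 and then the paper's integration formula and your step~3 finish the proof.
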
	
\begin{proof}
Let $\varphi\in C^\infty_c(B_\rho(x_0))$ be such that $\varphi=1$ in $B_{\sfrac\rho2}(x_0)$ and $|\nabla \varphi|\lesssim \frac1\rho$. We can compute
\begin{align*}
\lambda_1(\Omega)&\le \frac{\ds\int_{\R^d} |\nabla (u+t\varphi)|^2\,dx}{\ds\int_{\R^d} (u+t\varphi)^2\,dx}+K\rho^d\\
&=\frac{\ds\lambda_1(\Omega)+2t\int \nabla u\cdot\nabla\varphi\,dx+t^2\int|\nabla  \varphi|^2\,dx}{\ds 1+2t\int u\varphi\,dx+t^2\int \varphi^2\,dx}+K\rho^d,
\end{align*}
which implies that 
\begin{align*}
2t\left(-\int \nabla u\cdot\nabla\varphi\,dx+\lambda_1(\Omega)\int u\varphi\,dx\right)\le t^2\int|\nabla  \varphi|^2\,dx+2\left(1+t^2\int \varphi^2\,dx\right)K\rho^d,
\end{align*}
Choosing $t=\rho\le 1$ and using that $\Delta u+\lambda_1(\Omega)u$ is a positive Radon measure on $\R^d$ (see \cref{oss:subharmonicity}), we get  
$$\big(\Delta u+\lambda_1(\Omega)u\big)\big(B_{\sfrac\rho2}(x_0)\big)\le \left(-\int \nabla u\cdot\nabla\varphi\,dx+\lambda_1(\Omega)\int u\varphi\,dx\right)\le C_d(1+K)\rho^{d-1}.$$
As a consequence, if $u(x_0)=0$, using~\cite[formula (2.26)]{bmpv} and an integration by parts we obtain 
\begin{equation}\label{e:mean_boundary_est}
\mean{\partial B_r(x_0)}{u\,d\HH^{d-1}}=\int_0^r\frac{\Delta u(B_\rho(x_0))}{d\,\omega_d\,\rho^{d-1}}\,d\rho\le C_d(1+K)r.
\end{equation}
Now, let $y_0\in B_{\sfrac{R}8}$ and let $x_0$ be the projection of $y_0$ on the set $\{u=0\}$, which is closed as a consequence of \eqref{e:mean_boundary_est}. Since $u(0)=0$, we have that 
$$r_0:=|x_0-y_0|\le \sfrac{R}8.$$
Notice that we have 
$$B_{r_0}(y_0)\subset B_{2r_0}(x_0)\subset B_{\sfrac{R}2}.$$
Thus, applying \eqref{e:mean_boundary_est}, we get  
$$\mean{\partial B_{2r_0}(x_0)}{u\,d\HH^{d-1}}\le C_d(1+K)r_0.$$
Now, since there is a constant $C(d,\lambda_1)$, depending on $d$ and $\lambda_1(\Omega)$ such that 
$$u(x)+C(d,\lambda_1)|x-x_0|^2$$
is subharmonic (see \cref{oss:subharmonicity}), we have that 
\begin{align*}
\|u\|_{L^\infty(B_{{r_0}/2}(y_0))}&\le \mean{B_{r_0}(y_0)}{u\,dx}+C(d,\lambda_1)r_0^2\le 2^d\mean{B_{2r_0}(x_0)}{u\,dx}+C(d,\lambda_1)r_0^2\\
&\le 2^d\left(\mean{\partial B_{2r_0}(x_0)}{u\,d\HH^{d-1}}+C(d,\lambda_1)r_0^2\right)+C(d,\lambda_1)r_0^2\le C_d(1+K)r_0+C(d,\lambda_1)r_0^2.
\end{align*}
Now, the gradient estimate \eqref{e:gradest} gives the claim.
\end{proof}

\begin{proof}[Proof of \cref{l:lipschitz}]
Let $\Omega$ be as in the assumptions. By \cite[Theorem~5.3]{bmpv} there exists a second eigenfunction $w\in H^1_0(\Omega)$, which is Lipschitz continuous on $\R^d$.
We consider two cases.
\medskip

\noindent {\it Case 1.} If $w$ changes sign in $\Omega$, then $w$ is a solution of \eqref{e:fb_infty} (by  \cref{p:equivalence1}), so we can take $u=w$.

\medskip

\noindent {\it Case 2.} Suppose that $w$ does not change sign. Without loss of generality, we can assume that $w$ is non-negative. By \cref{p:equivalence1} and \cref{p:equivalence2}, there is a non-negative first eigenfunction $v\in H^1_0(\Omega)$ such that : 
\begin{itemize}
\item $v$ and $w$ have disjoint supports : $v w=0$ on $\R^d$ ;
\item $w-v$ is a solution to \eqref{e:fb_infty} ;
\item there are positive constants $\alpha$ and $\beta$ such that the function $u:=\alpha w-\beta v$ is a (sign-changing and normalized) second eigenfunction on $\Omega$.
\end{itemize} 
Thus, it remains to prove that $v$ is Lipschitz continuous. Let $x_0\in \Omega_{v}^+:=\{v>0\}$ and let $r$ be the largest radius for which the ball $B_r(x_0)$ is contained in $\{v>0\}$. We fix a constant $r_0>0$ (that we will later choose small enough) and we consider four cases:\medskip

{\it Case 2a.} $r\ge r_0$\,;\medskip

{\it Case 2b.} $r<r_0$ and in $B_{10r}(x_0)$ there is a point lying outside $D$\,;\medskip

{\it Case 2c.}  $r<r_0$, $B_{10r}(x_0)$ is contained in $D$ and in $B_{4r}(x_0)$ there is a point lying in $\Omega_w^+$\,;\medskip

{\it Case 2d.}  $r<r_0$, $B_{10r}(x_0)$ is contained in $D$ and $B_{4r}(x_0)\cap\{w>0\}=\emptyset$. \medskip

We start with the case $2a$. Since $v$ solves 
$$-\Delta v=\lambda_2(\Omega)v\quad\text{in}\quad B_r(x_0),$$
the classical gradient estimate (see~\cite{gitr}) gives
\begin{equation}\label{e:gradest}
\|\nabla v\|_{L^\infty(B_{\sfrac{r}2}(x_0))}\le C_d\|\lambda_2(\Omega)v\|_{L^\infty(B_{r}(x_0))}+\frac{2d}{r}\|v\|_{L^{\infty}(B_r(x_0))}.
\end{equation}
Since $v$ satisfies the global $L^\infty$ bound $\|v\|_{L^\infty(\R^d)}\le C_d\big(\lambda_2(\Omega)\big)^{\sfrac{d}4}$ and since $r\ge r_0$, we get that there is a constant $C(d,\lambda_2,r_0)$, depending on $d$, $\lambda_2(\Omega)$ and $r_0$, such that
$$|\nabla v|(x_0)\le C(d,\lambda_2,r_0).$$

We now consider the case $2b$. Let $w_D$ be the solution of 
$$-\Delta w_D=1\quad\text{in}\quad D,\qquad w_D=0\quad\text{on}\quad\R^d\setminus D.$$
Since $D$ is $C^{1,\beta}$ regular, the function $w_D$ is Lipschitz continuous on $\R^d$. We denote by $L$ its Lipschitz constant. Setting 
$$\ds C:=C_d\big(\lambda_1(\Omega)\big)^{\sfrac{(d+4)}4}$$ 
to be the constant from \eqref{e:subharmonicity_better}, we know that $Cw_D\ge v$ everywhere in $\R^d$. Then, we have 
$$v\le 11CLr\quad\text{in}\quad B_r(x_0).$$
Using again the gradient estimate \eqref{e:gradest}, we get that there is a constant $C(D,d,\lambda_2)$ depending only on $D$, $d$ and $\lambda_2(\Omega)$ such that 
$$|\nabla v|(x_0)\le C(D,d,\lambda_2).$$

We next consider the case $2c$. Let $y_0$ be a point in $\{w>0\}$. By the two-phase monotonicity formula of Caffarelli-Jerison-K\"enig (see \cite{CJK} and \cite{mono}), we know that there is a constant $C$, depending on $\lambda_2(\Omega)$ and the dimension such that 
$$C\ge \left(\mean{B_R(y_0)}{|\nabla w|^2\,dx}\right)\left(\mean{B_R(y_0)}{|\nabla v|^2\,dx}\right).$$
Applying \cref{l:potential_estimate}, we get that (up to multiplying $C$ by a factor depending only on the dimension)
$$C\ge \frac{|\{w=0\}\cap B_R(y_0)|}{|B_R|}\left(\frac1R\mean{\partial B_R(y_0)}{w\,d\HH^{d-1}}\right)^2\frac{|\{v=0\}\cap B_R(y_0)|}{|B_R|}\left(\frac1R\mean{\partial B_R(y_0)}{v\,d\HH^{d-1}}\right)^2.$$
and, since $w$ and $v$ have disjoint supports, 
$$C\ge \frac{|\{v>0\}\cap B_R(y_0)|}{|B_R|}\left(\frac1R\mean{\partial B_R(y_0)}{w\,d\HH^{d-1}}\right)^2\frac{|\{w>0\}\cap B_R(y_0)|}{|B_R|}\left(\frac1R\mean{\partial B_R(y_0)}{v\,d\HH^{d-1}}\right)^2.$$
We next choose $R=4r$. Thus, the non-degeneracy of $w$ (in order to use the non-degeneracy  \cref{l:nondegeneracy}, we choose $r_0$ small enough from the beginning) gives that 
$$\frac1R\mean{\partial B_R(y_0)}{w\,d\HH^{d-1}}\ge \eta.$$
In particular, there is a point $z_0\in\partial B_R(y_0)$ such that $w(z_0)\ge \eta$. But now, the Lipschitz continuity of $w$ (say $|\nabla w|\le L_w$) gives that $w>0$ in $B_{\eta/L_w}(z_0)$. Thus, we get also that 
$$\frac{|\{w>0\}\cap B_R(y_0)|}{|B_R|}\ge \frac{|B_{\eta/L_w}(z_0)\cap B_R(y_0)|}{|B_R|}\ge C(L_w,\eta,d).$$
Similarly, since $v$ is positive in $B_r(x_0)\cap B_{4r}(y_0)$, we have that there is a dimensional constant $c_d$ such that 
$$\frac{|\{v>0\}\cap B_R(y_0)|}{|B_R|}\ge c_d.$$
This finally gives that there is a constant $C(w,d)$, depending on $w$ and the dimension, such that 
$$C\ge \frac1R\mean{\partial B_R(y_0)}{v\,d\HH^{d-1}}.$$
Applying \eqref{e:L2-Linfty} the gradient estimate as in the case {\it 2a}, we get that 
$$|\nabla v|(x_0)\le C(w,d).$$
Finally, we consider the case {\it 2d}. First of all, we suppose that there is at least one point $x_1\in\partial\Omega_v\cap D$ and a radius $r_1>0$ such that $B_{r_1}(x_1)\subset D$ and $B_{r_1}(x_1)\cap\{w>0\}=\emptyset$ (in fact if there were not such $x_1$ and $r_1$, the proof of the lemma would be concluded with the case {\it 2c}). Now, by \cite{brla}, we know that $v$ is Lipschitz in $B_{r_1}(x_1)$ and that the free boundary $\partial\Omega_v\cap  B_{r_1}(x_1)$ is $C^\infty$ up to a small closed set. In particular, we may assume that in $D$ there are two distinct points $x_1$ and $x_2$, and a radius $0<R_{12}<\frac13|x_1-x_2|$ such that:
\begin{itemize}
\item $B_{R_{12}}(x_1)\subset D$ and $B_{R_{12}}(x_2)\subset D$\,;
\item $B_{R_{12}}(x_1)\cap \{w>0\}=\emptyset$  and $B_{R_{12}}(x_2)\cap \{w>0\}=\emptyset$\,;
\item $\partial\Omega_v$ is $C^\infty$ in $B_{R_{12}}(x_1)$ and $B_{R_{12}}(x_2)$\,;
\item there are constants $m>0$ and $C>0$ such that, for every $i=1,2$ and every $t\in(-m,m)$, there is a function $v_{i,t}\in H^1(B_{R_{12}}(x_i))$ such that:
$$v_{i,t}=v\quad\text{on}\quad \partial B_{R_{12}}(x_i).$$ 
\begin{equation}\label{e:vector_field1}
\big|B_{R_{12}}(x_1)\cap \{v_{i,t}>0\}\big|-\big|B_{R_{12}}(x_1)\cap \{v>0\}\big|=t
\end{equation}
\begin{equation}\label{e:vector_field2}
\int_{B_{R_{12}}(x_1)}\big(|\nabla v_{i,t}|^2+v_{i,t}^2\big)\,dx\le Kt.
\end{equation}
\end{itemize}	
We notice that for the construction of $v_{i,t}$ it is sufficient to take smooth vector fields $\xi_i\in C^\infty_c\big(B_{R_{12}}(x_i);\R^d\big)$, $i=1,2$, orthogonal to $\partial\Omega_v$ (parallel to the outgoing normal $\nu$) and pointing outwards and to define the functions 
$$v_{i,t}(x):=v(x+\xi_{i,t}(x)).$$
the claims \eqref{e:vector_field1} and \eqref{e:vector_field2} now follow from the well-known (see \cite{brla}) first variation formulas 
$$\frac{d}{dt}\Big|_{t=0}\int|\nabla v_{i,t}|^2\,dx=-\int_{\partial\Omega_v}(\xi\cdot \nu)|\nabla v|^2\,d\HH^{d-1}\qquad\text{and}\qquad \frac{d}{dt}\Big|_{t=0}|\{v_{i,t}>0\}|=\int_{\partial\Omega_v}\xi\cdot \nu\,d\HH^{d-1},$$
and the inverse function theorem. Now, with this family of functions in hand, we get back to the case {\it 2d}. Notice that, by choosing $r_0>0$ small enough, we can assume that the ball $B_{4r}(x_0)$ intersects at most one of the balls $B_{R_{12}}(x_1)$ and $B_{R_{12}}(x_2)$ (say, the first one). Thus, if $\varphi$ is a function compactly supported in $B_{4r}(x_0)$, we can consider the competitor 
$$\widetilde v=\begin{cases}
v\quad&\text{in}\quad \R^d\setminus (B_{4r}(x_0)\cup B_{R_{12}}(x_1)),\\
v+\varphi\quad&\text{in}\quad B_{4r}(x_0),\\
v_{i,t}\quad&\text{in}\quad B_{R_{12}}(x_1),\\
\end{cases}$$
where we choose $t$ such that $\{\widetilde v>0\}=\{v>0\}$. Thus, from \eqref{e:vector_field1} and \eqref{e:vector_field2}, we get that $v$ satisfies the almost-minimality condition \eqref{e:quasi-minimality}. Thus, we can use the universal estimate from \cref{l:one-phase_lipschitz_universal} and this concludes the proof. 
\end{proof}

%

\section{First variation formula}\label{sect:approx}
Let $\Omega$ be a solution to \eqref{e:minFLambdaqo}. 
From now on, we will take $\Lambda=1$, without loss of generality.
We know that there is a sign-changing function $u\in H^1_0(\Omega)$, which is Lipschitz continuous on $\R^d$ and a solution to \eqref{e:fb_infty}.  Our next objective is to prove that the function $u$ is a solution, in the viscosity sense, of a free boundary problem. In order to do so, we will first try to deduce a first order optimality condition coming from internal perturbations with vector fields. Since the function $\R^2\ni (a,b)\mapsto \max\{a,b\}$ is not differentiable, we will approximate $J_\infty$ with smooth functionals, inspired by~\cite{rtt} and \cite{krli}.

In what follows we will use the notation 
$$\mathcal R(v):=\frac{\ds\int_{\{v>0\}}|\nabla v|^2\,dx}{\ds\int_{\{v>0\}} v^2\,dx}\quad\text{for every nonnegative function}\quad v\in H^1(\R^d),\ v\neq0,$$
while, when $v=0$, we simply set $\mathcal R(0)=+\infty$. For every $p\in(1,+\infty)$, we consider the problem
\begin{equation}\label{eq:mainp}
\min\left\{J_p\big(\mathcal R(v_+)\,;\,\mathcal R(v_-)\big)+\int_{D}|u-v|^2+|\Omega_v| : v\in H^1_0(D)\right\}.
\end{equation}
where as usual $v_+=\max\{v,0\}$, $v_-=\max\{-v,0\}$ and $\Omega_v=\{v\neq0\}$, and where $J_p$ is the function
$$J_p(X,Y):=\big(X^p+Y^p\big)^{\sfrac1p}.$$
\begin{remark}
For all $p\in(1,+\infty)$, there exists a solution to the problem~\eqref{eq:mainp}: the proof is standard and follows by the same argument as the one in \cref{s:existenceqo}.
\end{remark}

\begin{lemma}[Convergence of the minima]\label{l:convergence}
For every $p\ge 2$, let $v_p\in H^1_0(D)$ be a solution to \eqref{eq:mainp} such that
$$\int_D(v_p^+)^2\,dx=\int_D(v_p^-)^2\,dx=1.$$
Then, as $p\to\infty$, $v_p$ converges strongly in $H^1_0(D)$ to the function $u$, solution to~\eqref{e:fb_infty}. Moreover, the characteristic functions $\ind_{\Omega_{v_p}^+}$ and $\ind_{\Omega_{v_p}^-}$ converge strongly in $L^1$ and pointwise almost-everywhere to $\ind_{\Omega_{u}^+}$ and $\ind_{\Omega_{u}^-}$, respectively.
\end{lemma}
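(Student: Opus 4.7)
The approach is a direct $\Gamma$-convergence style argument: use $u$ itself as a competitor in~\eqref{eq:mainp} to derive uniform bounds on $v_p$, extract a subsequence with weak limit $v_\infty$, and then show $v_\infty=u$ by comparing with the infinity-problem~\eqref{e:fb_infty}. Testing $u$ in~\eqref{eq:mainp}: by \cref{p:equivalence2} one has $\mathcal R(u_\pm)=\lambda_1(\Omega_u^\pm)=\lambda_2(\Omega)$ and $|\Omega_u|=|\Omega|$, so the minimality of $v_p$ yields
\[
J_p\big(\mathcal R(v_p^+),\mathcal R(v_p^-)\big)+\int_D|u-v_p|^2\,dx+|\Omega_{v_p}|\;\le\; 2^{1/p}\lambda_2(\Omega)+|\Omega|.
\]
Each term on the left is nonnegative, and since $J_p(X,Y)\ge\max(X,Y)$, the Rayleigh quotients $\mathcal R(v_p^\pm)$ are uniformly bounded; combined with the normalization $\int(v_p^\pm)^2\,dx=1$, this yields a uniform $H^1_0(D)$-bound on $v_p$. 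Along a subsequence, $v_p\rightharpoonup v_\infty$ weakly in $H^1_0(D)$, strongly in $L^2(D)$ and pointwise a.e. Strong $L^2$-convergence preserves the normalization ($\int(v_\infty^\pm)^2\,dx=1$) and gives $\int_D|u-v_p|^2\to\int_D|u-v_\infty|^2$.

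To identify $v_\infty=u$, I combine the inequality $J_p\ge\max$ with the weak-$H^1$ lower semicontinuity of the Dirichlet integrals (which, thanks to the normalization, equal the Rayleigh quotients) and Fatou's lemma applied to $\ind_{\Omega_{v_p}}$, obtaining
\[
J_\infty(v_\infty^+,v_\infty^-)+\int_D|u-v_\infty|^2\,dx+|\Omega_{v_\infty}|\;\le\; \lambda_2(\Omega)+|\Omega|.
\]
Since $v_\infty$ is admissible for~\eqref{e:fb_infty} and $u$ is a minimizer of that problem with value $\lambda_2(\Omega)+|\Omega|$ (by \cref{l:equiv-def-lambda-2} and \cref{p:equivalence1}), the above forces $\int_D|u-v_\infty|^2\,dx=0$, i.e.\ $v_\infty=u$ a.e.

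For the strong convergence, the three $\liminf$ inequalities above must each be saturated since their sum matches the upper bound, so each is an actual limit: $\int_D|u-v_p|^2\to 0$, $|\Omega_{v_p}|\to|\Omega_u|$, and $J_p(\mathcal R(v_p^+),\mathcal R(v_p^-))\to\lambda_2(\Omega)$. Because $u$ saturates the $\max$ ($\int|\nabla u_+|^2=\int|\nabla u_-|^2=\lambda_2(\Omega)$), the chain
\[
\lambda_2(\Omega)=\int|\nabla u_\pm|^2\le\liminf_p\int|\nabla v_p^\pm|^2\le\limsup_p\int|\nabla v_p^\pm|^2\le\limsup_p J_p=\lambda_2(\Omega)
\]
gives $\|\nabla v_p^\pm\|_{L^2}\to\|\nabla u_\pm\|_{L^2}$; with the weak $H^1$ convergence $v_p^\pm\rightharpoonup u_\pm$, this upgrades to strong $H^1$ convergence. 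Similarly, Fatou on each indicator gives $|\Omega_u^\pm|\le\liminf|\Omega_{v_p}^\pm|$, and summing with $|\Omega_{v_p}|\to|\Omega_u|$ forces $|\Omega_{v_p}^\pm|\to|\Omega_u^\pm|$; Scheff\'e's lemma then yields $L^1$ and (up to a further subsequence) pointwise a.e.\ convergence of $\ind_{\Omega_{v_p}^\pm}$ to $\ind_{\Omega_u^\pm}$. Since every subsequence admits a further subsequence converging to $u$, the full sequence converges. The principal obstacle is the non-smoothness of $J_\infty=\max$: the one-sided bound $J_p\ge\max$ provides lower semicontinuity cleanly, but separating the limits of $\int|\nabla v_p^\pm|^2$ into individual components relies crucially on $u$ itself saturating the $\max$.
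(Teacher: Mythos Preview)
Your argument is correct and follows essentially the same route as the paper's proof: test the minimality of $v_p$ against $u$, extract a weak limit $v_\infty$, and run the chain of inequalities (weak lower semicontinuity of the Dirichlet energy, Fatou on the measure, $J_p\ge\max$, and $J_p(\lambda_2,\lambda_2)=2^{1/p}\lambda_2\to\lambda_2$) to force $v_\infty=u$ and saturate each inequality, which upgrades to strong $H^1$ convergence and convergence of the measures. One small point: your appeal to Scheff\'e for the indicator convergence is not quite clean, since Scheff\'e requires pointwise a.e.\ convergence as a hypothesis, and on the set $\{u=0\}$ you only have $v_p\to0$ a.e., which does not immediately give $\ind_{\{v_p>0\}}\to0$ there; however, the combination of $\ind_{\Omega_u^\pm}\le\liminf\ind_{\Omega_{v_p}^\pm}$ a.e.\ with $|\Omega_{v_p}^\pm|\to|\Omega_u^\pm|$ does yield $L^1$ convergence directly (split the $L^1$ distance over $\Omega_u^\pm$ and its complement), and the paper is equally terse at this step.
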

\begin{proof}
We first notice that, by testing the minimality of $v_p$ with $u$, we get 
\begin{align*}
J_p\left(\int_{D}|\nabla v_p^+|^2\ ;\,\int_{D}|\nabla v_p^-|^2\right)+\int_{D}|u-v_p|^2+|\Omega_{v_p}|&\le J_p\left(\int_{D}|\nabla u_+|^2\ ;\,\int_{D}|\nabla u_-|^2\right)+|\Omega_{u}|\\
&\le 2\,J_\infty\left(\int_{D}|\nabla u_+|^2\ ;\,\int_{D}|\nabla u_-|^2\right)+|\Omega_{u}|.
\end{align*}
Thus, $v_p$ is bounded in $H^1$ and so, up to a subsequence, $v_p^+$ and $v_p^-$ converge weakly in $H^1$, strongly in $L^2$ and pointwise almost-everywhere to a function $v_\infty\in H^1_0(D)$. The convergence and the minimality of $v_p$ now give 
\begin{align*}
J_\infty\left(\int_{D}|\nabla u_+|^2\ ;\,\int_{D}|\nabla u_-|^2\right)+&\int_{D}|u-v_\infty|^2+|\Omega_{u}|\\
&\le J_\infty\left(\int_{D}|\nabla v_\infty^+|^2\ ;\,\int_{D}|\nabla v_\infty^-|^2\right)+\int_{D}|u-v_\infty|^2+|\Omega_{v_\infty}|\\
&\le 
J_\infty\left(\liminf_{p\to\infty}\int_{D}|\nabla v_p^+|^2\ ;\,\liminf_{p\to\infty}\int_{D}|\nabla v_p^-|^2\right)+\int_{D}|u-v_\infty|^2+|\Omega_{v_\infty}|\\
&\le \liminf_{p\to\infty}\left\{
J_p\left(\int_{D}|\nabla v_p^+|^2\ ;\,\int_{D}|\nabla v_p^-|^2\right)+\int_{D}|u-v_p|^2+|\Omega_{v_p}|\right\}\\
&\le  \lim_{p\to\infty} \left\{J_p\left(\int_{D}|\nabla u_+|^2\ ;\,\int_{D}|\nabla u_-|^2\right)+|\Omega_{u}|\right\}\\
&= J_\infty\left(\int_{D}|\nabla u_+|^2\ ;\,\int_{D}|\nabla u_-|^2\right)+|\Omega_{u}|,
\end{align*}
which proves that $v_\infty=u$ and that all the inequalities above are equalities. In particular,
\begin{align*}
\int_{D}|\nabla u_+|^2\le \liminf_{p\to\infty}\int_{D}|\nabla v_p^+|^2
&= J_\infty\left(\liminf_{p\to\infty}\int_{D}|\nabla v_p^+|^2\ ;\,\liminf_{p\to\infty}\int_{D}|\nabla v_p^-|^2\right)\\
&= J_\infty\left(\int_{D}|\nabla u_+|^2\ ;\,\int_{D}|\nabla u_-|^2\right)=\int_{D}|\nabla u_+|^2,
\end{align*}
which means that the convegence is strong in $H^1$. Finally, the strong convergence of the characteristic functions follows from the equalities 
\[
|\Omega_u^\pm|=\liminf_{p\to\infty}|\Omega_{v_p}^\pm|.\qedhere
\]
\end{proof}

In what follows we will use the notation $\delta J(u)[\xi]$ to indicate  the first variation of a functional $J$ at a function $u$ in the direction of a smooth vector field $\xi$. Precisely, for every $u\in H^1_0(D)$, $\xi\in C^\infty_c(D;\R^d)$, we define the diffeomorphism $\Phi_t$ as 
\begin{equation}\label{e:diffeo}
\Phi_t=\Psi_t^{-1}, \quad\text{where}\quad \Psi_t(x):=x+t\xi(x).
\end{equation}
Then, if the derivative $\ds \frac{\partial}{\partial t}\Big|_{t=0} J(u\circ\Phi_t)$ exists, we set 
$$\delta J(u)[\xi]:=\frac{\partial}{\partial t}\Big|_{t=0} J(u\circ\Phi_t).$$

It is well-known that $\delta \mathcal R(u)[\xi]$ exists for any $u\in H^1_0(D)$ and $\xi\in C^\infty_c(D;\R^d)$ and that  
\begin{equation}\label{e:first_variation_J}
\delta \mathcal R(u)[\xi]=\int_{D}\Big(|\nabla u|^2-\lambda u^2\Big)\mathrm{div}\,\xi-2\,\nabla u\, D\xi\,(\nabla u)^t\,dx,
\end{equation}
where $\lambda:=\mathcal R(u)$.
 Moreover, setting $\text{\rm Vol}(u)=|\Omega_u|$, we have that $\delta\text{\rm Vol}(u)[\xi]$  exists for all $u\in H^1_0(D)$ and $\xi\in C^\infty_c(D;\R^d)$, and  
\begin{equation}\label{e:first_variation_Vol}
\delta\text{\rm Vol}(u)[\xi]=\int_{\Omega_u}\text{\rm div}\,\xi\,dx.
\end{equation}
Now, using the formulas \eqref{e:first_variation_J} and \eqref{e:first_variation_Vol}, we can compute the optimality condition for the minimizers of \eqref{eq:mainp}. Precisely, we have the following lemma. 

\begin{lemma}\label{le:optp}
	Let $p>1$ and let $u_p\in H^1_0(D)$ be a solution to~\eqref{eq:mainp} such that 
	$$\int_D(u_p^+)^2\,dx=\int_D(u_p^-)^2\,dx=1.$$
	Then, setting 
	\[
	a^\pm_p:=\frac{\big(\mathcal R(u_p^\pm)\big)^{p-1}}{\Big[\big(\mathcal R(u_p^+)\big)^p+\big(\mathcal R(u_p^-)\big)^p\Big]^{1-\frac1p}},
	\]
	we have that for any smooth vector field $\xi\in C^\infty_c(D;\R^d)$,
\begin{equation}\label{e:first_variation_full}
a^+_p\delta \mathcal R(u_p^+)[\xi]+a_p^-\delta\mathcal R(u_p^-)[\xi]+2\int_{D}(u_p-u)\,\xi\cdot \nabla u_p\,dx+\delta \text{\rm Vol}(u_p^+)[\xi]+\delta \text{\rm Vol}(u_p^-)[\xi]=0.
\end{equation}
\end{lemma}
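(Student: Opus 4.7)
The plan is to derive \eqref{e:first_variation_full} as the vanishing of the first variation of the functional in \eqref{eq:mainp} along inner perturbations generated by $\xi$. Given $\xi\in C^\infty_c(D;\R^d)$, let $\Psi_t(x)=x+t\xi(x)$ and $\Phi_t=\Psi_t^{-1}$ as in \eqref{e:diffeo}. For $|t|$ small, $\Phi_t$ is a diffeomorphism of $\R^d$ that coincides with the identity outside a compact subset of $D$, so the competitor $v_t:=u_p\circ\Phi_t$ lies in $H^1_0(D)$. The minimality of $u_p$ then ensures that the real-valued function
\[
\mathcal G(t):=J_p\bigl(\mathcal R(v_t^+),\mathcal R(v_t^-)\bigr)+\int_{D}|u-v_t|^2\,dx+|\Omega_{v_t}|
\]
attains its minimum at $t=0$, and \eqref{e:first_variation_full} amounts to $\mathcal G'(0)=0$ rewritten term by term.

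Each of the three groups of terms in $\mathcal G$ is handled by a standard formula. Since $\Phi_t$ preserves signs, $(v_t)_\pm=(u_p)_\pm\circ\Phi_t$, and the first-variation formula \eqref{e:first_variation_J} yields $\tfrac{d}{dt}\big|_{t=0}\mathcal R(v_t^\pm)=\delta\mathcal R(u_p^\pm)[\xi]$. The normalization hypothesis forces $\mathcal R(u_p^\pm)>0$, so $J_p(X,Y)=(X^p+Y^p)^{1/p}$ is smooth at the base point, with partial derivatives
\[
\partial_X J_p(X,Y)=\frac{X^{p-1}}{(X^p+Y^p)^{1-1/p}},\qquad \partial_Y J_p(X,Y)=\frac{Y^{p-1}}{(X^p+Y^p)^{1-1/p}}.
\]
Evaluated at $\bigl(\mathcal R(u_p^+),\mathcal R(u_p^-)\bigr)$ these are exactly $a_p^+$ and $a_p^-$, so the chain rule produces the first two terms of \eqref{e:first_variation_full}. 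For the volume piece, the sets $\Omega_{v_t}^\pm=\Psi_t(\Omega_{u_p}^\pm)$ are disjoint, and \eqref{e:first_variation_Vol} applied separately to each sign gives the contribution $\delta\mathrm{Vol}(u_p^+)[\xi]+\delta\mathrm{Vol}(u_p^-)[\xi]$.

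Only the $L^2$ fidelity requires a direct calculation. Differentiating the identity $\Phi_t\circ\Psi_t=\mathrm{id}$ at $t=0$ gives $\partial_t\Phi_t|_{t=0}=-\xi$, whence the first-order expansion $v_t=u_p-t\,\xi\cdot\nabla u_p+o(t)$ in $L^2(D)$, which is valid because $\xi$ is compactly supported smooth and $u_p\in H^1$. Differentiation under the integral, justified by the uniform $L^2$-boundedness of the difference quotients, then yields the remaining contribution $2\int_D(u_p-u)\,\xi\cdot\nabla u_p\,dx$. Summing the four contributions and setting $\mathcal G'(0)=0$ produces \eqref{e:first_variation_full}. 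The only genuinely delicate point of the argument is the consistent bookkeeping of signs across \eqref{e:first_variation_J}, \eqref{e:first_variation_Vol}, and the fidelity term, since the convention $\Phi_t=\Psi_t^{-1}$ (rather than $\Psi_t$ itself) introduces factors of $-1$ in each time-derivative that must be tracked carefully; once this is done, the rest follows from the chain rule and the smoothness of $J_p$ on the positive quadrant.
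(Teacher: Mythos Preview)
Your proof is correct and follows the same approach as the paper: both differentiate the functional in \eqref{eq:mainp} at $t=0$ along the family $u_p\circ\Phi_t$, invoke \eqref{e:first_variation_J} and \eqref{e:first_variation_Vol} for the Rayleigh and volume terms, compute the fidelity term directly, and apply the chain rule to $J_p$ to identify the coefficients $a_p^\pm$. Your write-up is in fact more explicit than the paper's (which simply records the fidelity derivative and sums), and your closing remark about tracking the $\Phi_t=\Psi_t^{-1}$ sign convention is well taken.
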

\begin{proof}
Let $\xi\in C^\infty_c(D;\R^d)$ and $\Phi_t$ be as in \eqref{e:diffeo}. Since we already have \eqref{e:first_variation_J} and \eqref{e:first_variation_Vol}, it is sufficient to compute the variation of the fidelity term. We have 
$$\frac{\partial}{\partial t}\Big|_{t=0}\int_D|u_p\circ\Phi_t-u|^2\,dx=	\int_{D}2(u_p-u)\,\xi\cdot \nabla u_p\,dx.$$
Then, using the optimality of $u_p$, we get that 
\[
	\begin{split}
	0&=\frac{\partial}{\partial t}\Big|_{t=0}\Big[ J_p\Big(\mathcal R\big((u_p\circ\Phi_t)_+\big), \mathcal R\big((u_p\circ\Phi_t)_-\big)\Big)+\int_D|u_p\circ\Phi_t-u|^2\,dx+\big|\{u_p\circ\Phi_t\neq0\}\big|\Big]\\
	&= a_p^+\delta \mathcal R(u_p^+)[\xi]+a_p^-\delta \mathcal  R(u_p^-)[\xi]+\int_{D}2(u_p-u)\,\xi\cdot \nabla u_p\,dx+\delta \text{\rm Vol}(u_p^+)[\xi]+\delta \text{\rm Vol}(u_p^-)[\xi],
	\end{split}
	\]
	which gives the claim.
\end{proof}

We now pass to the limit as $p\rightarrow +\infty$.

\begin{lemma}\label{l:limit_optimality}	
Let $D$ be a bounded open set and let $u\in H^1_0(D)$ be a Lipschitz continuous solution of \eqref{e:fb_infty}.	
Then, there are constants $a_+\ge 0$ and $a_-\ge 0$ such that
$$a_++a_-=1,$$ 
and, for every smooth vector field $\xi\in C^\infty_c(D;\R^d)$, we have 
\begin{align*}
a_+\delta \mathcal R(u_+)[\xi]+a_-\delta \mathcal R(u_-)[\xi]+\delta \text{\rm Vol}(u_+)[\xi]+\delta \text{\rm Vol}(u_-)[\xi]=0.
\end{align*}	
\end{lemma}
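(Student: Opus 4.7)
The plan is to pass to the limit as $p \to \infty$ in the first-order identity~\eqref{e:first_variation_full} from \cref{le:optp}. For each $p \geq 2$, let $u_p$ be a normalized minimizer of~\eqref{eq:mainp} with $\int_D (u_p^\pm)^2\,dx = 1$, and let $a_p^\pm$ be the coefficients defined there. By \cref{l:convergence}, we have the strong convergence $u_p \to u$ in $H^1_0(D)$ together with $\ind_{\Omega^\pm_{u_p}} \to \ind_{\Omega^\pm_u}$ in $L^1(D)$; from \cref{p:equivalence2} we moreover know that $\mathcal R(u_+) = \mathcal R(u_-) > 0$, so $\mathcal R(u_p^\pm) \to \mathcal R(u_\pm)$.

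Using these convergences, all terms in~\eqref{e:first_variation_full} except the coefficients $a_p^\pm$ pass to their expected limits. Indeed, the strong $H^1$ convergence combined with the explicit formula~\eqref{e:first_variation_J} and the convergence of $\mathcal R(u_p^\pm)$ yields $\delta \mathcal R(u_p^\pm)[\xi] \to \delta \mathcal R(u_\pm)[\xi]$; the strong $L^1$ convergence of the characteristic functions together with~\eqref{e:first_variation_Vol} yields $\delta \text{\rm Vol}(u_p^\pm)[\xi] \to \delta \text{\rm Vol}(u_\pm)[\xi]$ by dominated convergence; finally, Cauchy--Schwarz gives
$$\Bigl|\int_D (u_p - u)\,\xi \cdot \nabla u_p\,dx\Bigr| \leq \|\xi\|_\infty\,\|u_p-u\|_{L^2}\,\|\nabla u_p\|_{L^2} \to 0.$$

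The crux of the proof is therefore the control of the coefficients $a_p^\pm$. Writing $X_p = \mathcal R(u_p^+)$ and $Y_p = \mathcal R(u_p^-)$, we have
$$a_p^+ + a_p^- = \frac{X_p^{p-1}+Y_p^{p-1}}{(X_p^p + Y_p^p)^{(p-1)/p}}.$$
Applied with exponents $p$ and $p/(p-1)$, Hölder's inequality gives the upper bound $a_p^+ + a_p^- \leq 2^{1/p}$. For the lower bound, the elementary inequality $X_p^p + Y_p^p \leq \max(X_p,Y_p)\,(X_p^{p-1}+Y_p^{p-1})$, combined with $X_p^{p-1}+Y_p^{p-1} \geq \max(X_p,Y_p)^{p-1}$, yields $a_p^+ + a_p^- \geq 1$. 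Hence $a_p^+ + a_p^- \to 1$, and along a subsequence $a_p^\pm \to a_\pm$ with $a_\pm \geq 0$ and $a_+ + a_- = 1$. Passing to the limit in~\eqref{e:first_variation_full} along this subsequence then gives the claimed identity.

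The main obstacle is precisely the two-sided asymptotic $1 \leq a_p^+ + a_p^- \leq 2^{1/p}$: the upper bound is the easy Hölder direction, but the matching lower bound is essential to rule out the degenerate possibility $a_+ = a_- = 0$ in the limit, which would make the final identity trivial. Beyond this elementary inequality, the rest of the argument is a routine limit procedure using the strong convergences provided by \cref{l:convergence}.
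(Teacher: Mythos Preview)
Your proof is correct and follows essentially the same route as the paper: pass to the limit in the first-variation identity~\eqref{e:first_variation_full} using the strong convergences from \cref{l:convergence}, with the fidelity term vanishing by Cauchy--Schwarz. The only cosmetic difference is in the treatment of $a_++a_-=1$: the paper observes the exact identity $(a_p^+)^{p/(p-1)}+(a_p^-)^{p/(p-1)}=1$ (which follows immediately from the definition of $a_p^\pm$) and then lets $p\to\infty$, whereas you sandwich $a_p^++a_p^-$ between $1$ and $2^{1/p}$ by elementary inequalities; both arguments are equally short and lead to the same conclusion.
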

\begin{proof}
Using \cref{le:optp} and the convergence of the solutions $u_p$ proved in \cref{l:convergence}, we have that the variation of the fidelity term vanishes. Indeed, 
$$\lim_{p\to\infty}\int_{D}(u_p-u)\,\xi\cdot \nabla u_p\,dx=0.$$
Thus, passing to the limit in \cref{le:optp} and using again \cref{l:convergence}, we get the claim. Finally, the equality $a_++a_-=1$ follows from the fact that 
\[
\lim_{p\to\infty}a_p^\pm=a_\pm\qquad\text{and}\qquad (a_p^+)^{\frac{p}{p-1}}+(a_p^-)^{\frac{p}{p-1}}=1\quad\text{for every}\quad p\ge 1.\qedhere
\]
\end{proof}

\section{Two-phase free boundary: blow-up limits and regularity}\label{sect:blowup}
Let $u:\R^d\to\R$ be a Lipschitz continuous solution to~\eqref{e:fb_infty}.
Let $x_0$ be a point of the free boundary, that is, \[
x_0\in \partial \Omega_u\cap D,
\]
and we define the rescaled function \[
u_{x_0,r}(x)=\frac{u(x_0+rx)}{r},\qquad \text{for }r>0,
\]
on the set $\{x\in \R^d : x_0+rx\in D\}$.
For any vanishing sequence $(r_n)$, we say that $u_{x_0,r_n}$ is a blow-up sequence (with fixed center).
It is clear that, for all $R>0$, for all $n$ large enough, we have \[
B_R\subset \{x\in \R^d : x_0+r_nx\in D\},
\]
and moreover, by Lipschitz continuity of $u$ and the definition of the blow-up sequence with $u(x_0)=0$, we have that there is a locally Lipschitz continuous function $u_0\colon \R^d\rightarrow \R$ such that
\begin{equation}\label{eq:blowup}
\|u_{x_0,r_n}-u_0\|_{L^\infty(B_R)}\rightarrow 0,\qquad \text{for all }R>0,
\end{equation}
up to pass to a suitable subsequence with a diagonal argument.
\begin{definition}
We will say that $u_0\colon \R^d\rightarrow \R$ is a \emph{blow-up} limit of $u$ at $x_0$. 
\end{definition}

Our main result is the following 

\begin{theorem}\label{t:main_intext}
Let $u$ be a Lipschitz continuous solution of \eqref{e:fb_infty} and let $a_+\ge 0$ and $a_-\ge 0$ be the constants from \cref{l:limit_optimality}. Then 
\begin{equation}\label{e:non-degeneracy-a}
a_+>0\qquad\text{and}\qquad a_->0.
\end{equation}
Moreover, if $x_0\in\partial\Omega_u^+\cap\partial\Omega_u^-$ then every blow-up limit $u_0$ of $u$ at $x_0$ is of the form 
\begin{equation}\label{e:classification_blowup_limits}
u_0(x):=\beta_+(x\cdot\nu)_+-\beta_-(x\cdot\nu)_-,
\end{equation}
where $\nu\in \partial B_1$ and the coefficients $\beta_+$ and $\beta_-$ are such that 
\begin{equation}\label{e:classification_blowup_limits2}
\beta_+\ge \frac1{\sqrt a_+},\quad \beta_-\ge\frac1{\sqrt a_-}\quad\text{and}\quad a_+\beta_+^2=a_-\beta_-^2.
\end{equation}
\end{theorem}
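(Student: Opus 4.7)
The plan is to combine a Weiss-type monotonicity formula with a Faber–Krahn argument on $S^{d-1}$. I would first address the positivity \eqref{e:non-degeneracy-a} and then the classification.

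For positivity, suppose $a_+=0$ (so $a_-=1$). The corollary at the end of \cref{s:nondegeneracy} guarantees that any contact point $x_0\in\partial\Omega_u^+\cap\partial D$ is a one-phase point for $u_+$, so one can pick a ball $B_r(x_0)\subset D$ with $B_r(x_0)\cap\overline{\Omega_u^-}=\emptyset$. Testing the first variation formula of \cref{l:limit_optimality} against $\xi\in C^\infty_c(B_r(x_0);\R^d)$, all terms involving $u_-$ vanish, and with $a_+=0$ the identity collapses to $\int_{\Omega_u^+}\mathrm{div}\,\xi\,dx=0$ for every such $\xi$. This is impossible since $\Omega_u^+$ has both positive density and positive codensity in $B_r(x_0)$, so $\chi_{\Omega_u^+}$ is non-constant on $B_r(x_0)$ and its distributional gradient is non-zero. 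Hence $a_+>0$; by symmetry $a_->0$.

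Next, fix a two-phase point $x_0\in\partial\Omega_u^+\cap\partial\Omega_u^-$ and let $u_{x_0,r_n}\to u_0$ locally uniformly and weakly in $H^1_{\mathrm{loc}}(\R^d)$, which is possible since $u$ is globally Lipschitz. Using the Weiss-type monotonicity formula to be proved in \cref{sect:blowup} — obtained by testing \cref{l:limit_optimality} against radial vector fields $\xi(x)=(x-x_0)\,\psi(|x-x_0|/r)$ — the blow-up $u_0$ is homogeneous of degree one. The eigenvalue equation rescales as $-\Delta(u_{x_0,r_n})_\pm=r_n^2\lambda_\pm(u_{x_0,r_n})_\pm$, yielding $\Delta(u_0)_\pm=0$ on $\Omega_{u_0}^\pm$ in the limit. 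The first variation formula of \cref{l:limit_optimality} itself is stable under rescaling (the term $a_\pm\lambda_\pm u_\pm^2\,\mathrm{div}\,\xi$ is lower order under $\xi\mapsto\xi(\cdot/r)$), so $u_0$ satisfies the same identity without the $\lambda_\pm u_\pm^2$ contributions.

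To classify $u_0$, note that each connected component $C$ of $\Omega_{u_0}^\pm$ is a cone by homogeneity. Writing $(u_0)_\pm=\rho\,\phi_\pm(\theta)$ in polar coordinates, $\phi_\pm$ is a positive first Dirichlet eigenfunction of $-\Delta_{S^{d-1}}$ on $C\cap S^{d-1}$ with eigenvalue exactly $d-1$, the eigenvalue of a hemisphere. Faber–Krahn on the sphere then forces each $C\cap S^{d-1}$ to have area at least $|S^{d-1}|/2$; since both phases are present in every neighborhood of $x_0$ and are disjoint, both components must fill opposite hemispheres sharing a common hyperplane $\{x\cdot\nu=0\}$, and therefore $u_0=\beta_+(x\cdot\nu)_+-\beta_-(x\cdot\nu)_-$ with $\beta_\pm>0$. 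Testing the blow-up first variation against $\xi$ crossing $\{x\cdot\nu=0\}$ and integrating by parts yields the Bernoulli balance $a_+\beta_+^2=a_-\beta_-^2$, while the inward minimality of $\Omega_u^\pm$ from \cref{p:equivalence2}(iv) is preserved in the blow-up and gives the lower bounds $\beta_\pm\ge 1/\sqrt{a_\pm}$. The main obstacle is the rigorous passage of the first variation to the limit, in particular deriving the Bernoulli balance as a distributional identity across $\{x\cdot\nu=0\}$ without a priori regularity of the free boundary; ruling out geometries where one phase has several half-space components meeting the other relies on the three-phase monotonicity formula of \cref{l:three-phase}, as already used in \cref{p:triple}.
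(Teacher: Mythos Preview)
Your classification of the two-phase blow-ups via the spherical eigenvalue argument is essentially the paper's Step~1, and the balance $a_+\beta_+^2=a_-\beta_-^2$ from the limiting first variation is also correct. There are, however, two genuine gaps.

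\textbf{Positivity of $a_\pm$.} Your argument assumes there is a ball $B_r(x_0)\subset D$ with $B_r(x_0)\cap\overline{\Omega_u^-}=\emptyset$ meeting $\partial\Omega_u^+$; the reference to a ``contact point $x_0\in\partial\Omega_u^+\cap\partial D$'' is confused (such an $x_0$ cannot be the center of a ball contained in $D$). More importantly, nothing rules out that every point of $\partial\Omega_u^+\cap D$ is a two-phase point, in which case no such ball exists and your contradiction disappears. The paper (Step~2) treats this by a case split: if there are no two-phase points your argument essentially works; if a two-phase point $x_0$ exists, one projects a nearby $y_0\in\Omega_u^-$ onto $z_0\in\partial\Omega_u^+$ and blows up at $z_0$. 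Under the contradiction hypothesis $a_-=0$, $a_+=1$, the Weiss formula forces $u_0^+$ to be one-homogeneous; combined with the half-space where $u_0^+$ vanishes this pins $u_0^+$ as linear, and one then shows $\Omega_{u_0}^-$ fills the opposite half-space and reaches a contradiction from the stationarity identity.

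\textbf{Lower bounds $\beta_\pm\ge 1/\sqrt{a_\pm}$.} The inward minimality in \cref{p:equivalence2}\,(iv) is for $\lambda_1(\cdot)+\Lambda|\cdot|$ with $\Lambda=1$; passed to the blow-up it yields at best $\beta_\pm\ge 1$, strictly weaker than $\beta_\pm\ge 1/\sqrt{a_\pm}$ since $a_\pm<1$. The constants $a_\pm$ simply do not appear in (iv). To obtain the correct coefficient the paper (Steps~3--4) uses instead the \emph{volume-constrained} minimality \cref{p:equivalence2}\,(v): at a smooth one-phase point of $\partial\Omega_u^+$ the first variation of \cref{l:limit_optimality} identifies the Bernoulli constant as $1/a_+$, and one builds competitors for $u_+$ in a small ball $B_\rho$ by compensating any loss of volume with a deformation near that smooth point, paying $t/a_+$ in Dirichlet energy for each unit $t$ of recovered volume. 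This produces an inward Alt--Caffarelli minimality for $u_\rho^+$ with weight $1/a_+$, which passes to the blow-up and gives $a_+\beta_+^2\ge 1$.
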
	
	
As a corollary, we obtain the regularity of the two-phase free boundary. 

\begin{corollary}\label{c:main_cor}
Let $D$ be a bounded open set and let $u:D\to\R$ be a Lipschitz continuous solution to \eqref{e:fb_infty}. Then, in a neighborhood of the two-phase free boundary $\partial\Omega_u^+\cap\partial\Omega_u^-$, both $\partial\Omega_u^+$ and $\partial\Omega_u^-$ are $C^{1,\alpha}$ regular.
\end{corollary}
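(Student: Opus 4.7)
The plan is to reduce Corollary~\ref{c:main_cor} to the two-phase regularity theorem of~\cite{despve}, which yields the $C^{1,\alpha}$ regularity of both phases in a neighborhood of the two-phase free boundary provided the solution satisfies the viscosity system~\eqref{e:opt_con_intro}. The first difficulty is that the optimality condition derived in \cref{l:limit_optimality} does not directly give the standard two-phase balance $|\nabla u_+|^2-|\nabla u_-|^2=\alpha_+^2-\alpha_-^2$; what it yields (through the classification of blow-ups in \cref{t:main_intext}) is instead the \emph{weighted} balance $a_+|\nabla u_+|^2=a_-|\nabla u_-|^2$ on the two-phase free boundary. I would therefore first rescale the two phases with different factors so as to obtain a new function satisfying the symmetric two-phase condition.

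Concretely, since $a_\pm>0$ by \cref{t:main_intext}, I set $\alpha_\pm:=1/\sqrt{a_\pm}$ and define
\[
\tilde u\,:=\,\alpha_+^{-1}u_+-\alpha_-^{-1}u_-,
\]
so that $\Omega_u^\pm=\{\pm\tilde u>0\}$. By \cref{p:equivalence2}(i), $u_\pm$ is the first eigenfunction on $\Omega_u^\pm$, hence $\tilde u$ solves an equation of the form $-\Delta\tilde u=V(x)\tilde u$ with $V\in L^\infty$ in $\Omega_u^+\cup\Omega_u^-$; these eigenvalue contributions are lower-order terms that are harmless for the free boundary analysis. Next I would verify that $\tilde u$ satisfies~\eqref{e:opt_con_intro} in the viscosity sense, with the \emph{balanced} constants $\alpha_+=\alpha_-=1$. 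The one-phase conditions $|\nabla\tilde u_\pm|=1$ on $\partial\Omega_u^\pm\setminus\partial\Omega_u^\mp$ will follow from the first-variation identity of \cref{l:limit_optimality}, applied with vector fields supported away from the opposite phase, together with the one-phase Alt--Caffarelli/Brian\c{c}on--Lamboley regularity already exploited in \cref{s:nondegeneracy}. At a two-phase point $x_0\in\partial\Omega_u^+\cap\partial\Omega_u^-$, \cref{t:main_intext} classifies every blow-up of $u$ as $\beta_+(x\cdot\nu)_+-\beta_-(x\cdot\nu)_-$ with $\beta_\pm\ge\alpha_\pm$ and $a_+\beta_+^2=a_-\beta_-^2$; dividing by $\alpha_\pm$ turns these into blow-ups of $\tilde u$ of the form $\tilde\beta(x\cdot\nu)_+-\tilde\beta(x\cdot\nu)_-$ with a common slope $\tilde\beta\ge 1$, which encodes exactly $|\nabla\tilde u_\pm|\ge 1$ and $|\nabla\tilde u_+|^2-|\nabla\tilde u_-|^2=0$ in viscosity sense.

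Once~\eqref{e:opt_con_intro} is checked for $\tilde u$ with $\alpha_+=\alpha_-=1$, the theorem of~\cite{despve} applies and yields that both $\partial\Omega_u^+$ and $\partial\Omega_u^-$ are $C^{1,\alpha}$ in a neighborhood of the two-phase free boundary, which gives the corollary. The step I expect to be the main obstacle is the passage from the blow-up classification in \cref{t:main_intext} to the viscosity boundary conditions on $\tilde u$: I will need a touching-test-function argument showing that whenever a two-plane comparison function of the form $\gamma_+(x\cdot\mu)_+-\gamma_-(x\cdot\mu)_-$ touches $\tilde u$ from one side at a two-phase point, its slopes $\gamma_\pm$ must obey the required inequalities. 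This relies on the Lipschitz continuity of $u$ from \cref{l:lipschitz} (to extract blow-up limits along any vanishing sequence of radii) and on the fact that \emph{every} such subsequential blow-up of $\tilde u$ is, by the classification, a two-plane solution with equal slopes $\tilde\beta\ge 1$.
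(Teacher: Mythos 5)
Your proposal is correct and follows essentially the same route as the paper: the paper likewise introduces the rescaled function $v=\sqrt{a_+}\,u_+-\sqrt{a_-}\,u_-$ (identical to your $\tilde u$, since $\alpha_\pm^{-1}=\sqrt{a_\pm}$), observes that the blow-up classification from \cref{t:main_intext} translates into the balanced viscosity conditions $|\nabla v_\pm|\ge 1$ and $|\nabla v_+|=|\nabla v_-|$ at two-phase points (with the one-phase condition $|\nabla v|=1$ coming from the one-phase analysis as in \cite{rtv}), and then invokes \cite{despve}. The touching-test-function step you flag as the main obstacle is exactly what the paper delegates to \cite[Section~2]{despve}.
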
	 
\begin{proof}
We define the function $v$ as 
$$v=\sqrt a_+\,u_+-\sqrt a_-\,u_-.$$
Then:
\begin{itemize}
\item $v$ is Lipschitz continuous;
\item $v$ satisfies the equations 
$$-\Delta v=\lambda v\quad\text{in}\quad \Omega_v^+\cup\Omega_v^-.$$
\item on the one-phase free boundaries $D\cap\partial\Omega_v^+\setminus\partial\Omega_v^-$ and  $D\cap\partial\Omega_v^-\setminus\partial\Omega_v^+$, we have that  $|\nabla v|=1$ in viscosity sense (see for example \cite[Section~5]{rtv});
\item for every two-phase point $x_0\in\partial\Omega_v^+\cap\partial\Omega_v^-$, $v$ satisfies the equations
$$|\nabla v_+|\ge 1,\quad |\nabla v_-|\ge 1,\quad |\nabla v_+|=|\nabla v_-|,$$
in viscosity sense. This is an immediate consequence of the classification of the blow-up limits of  \cref{t:main_intext}, and can be done as in \cite[Section 2]{despve}.
\end{itemize}
Thus, the claim follows from \cite[Theorem~1.1 and 4.3]{despve}.
\end{proof}	

\subsection{Convergence of the blow-up sequences}
In this section we prove the strong convergence of the blow-up sequences. The main result is the following.

\begin{lemma}\label{le:convH1bu}
	Let $D$ be an open subset of $\R^d$, $u$ a Lipschitz continuous solution of \eqref{e:fb_infty} and $y_0\in \partial \Omega_u\cap D$.
	Let $r_n>0$ be a vanishing sequence and $u_n:=u_{y_0,r_n}$ be the corresponding blow-up sequence converging locally uniformly to the blow-up limit $u_0\colon \R^d\rightarrow \R$. Then, for every $R > 0$, 
	\begin{enumerate}[\rm (i)]
	\item the sequence of rescalings $u_{y_0,r_n}$ converges strongly in $H^1(B_R)$ to $u_0$;
	\item the sequences of characteristic functions $\ind_{\Omega_n^+}$ and $\ind_{\Omega_n^-}$, where $\Omega_n^\pm:=\{\pm u_{n}>0\}$, converge in $L^1(B_R)$ and pointwise almost-everywhere to the characteristic functions $\ind_{\Omega_0^+}$ and $\ind_{\Omega_0^-}$ of the sets $\Omega_0^\pm:=\{\pm u_0>0\}$.
	\end{enumerate}
\end{lemma}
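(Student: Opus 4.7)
The plan is to extract weak $H^1_{\loc}$ convergence from a uniform Lipschitz bound, and then upgrade to strong convergence by exploiting an almost-minimality of $u^\pm$ at the blow-up scale inherited from \cref{p:equivalence2}. Since $u$ is Lipschitz on $\R^d$, the rescalings $u_n$ share the same Lipschitz constant and hence are uniformly bounded in $H^1(B_R)$ for each fixed $R>0$; up to a subsequence, $u_n\rightharpoonup u_0$ weakly in $H^1(B_R)$ and $u_n\to u_0$ uniformly on $\overline{B_R}$. Lower semicontinuity gives $\int_{B_R}|\nabla u_0|^2 \le \liminf_n \int_{B_R}|\nabla u_n|^2$, so the essential point is the reverse $\limsup$ bound.

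For the strong convergence, I would first convert the minimality of $\Omega$ into a quasi-minimality of $u^+$ (and symmetrically $u^-$): combining \cref{p:equivalence2}\,(i),(iv),(v), for every nonnegative $v\in H^1_0(D)$ that agrees with $u^+$ outside a ball $B_\rho(x)\Subset D$ and satisfies $\{v>0\}\cap\Omega_u^-=\emptyset$, one should obtain
$$\int_D|\nabla u^+|^2\,dx + \Lambda\bigl|\{u^+>0\}\bigr| \le \int_D|\nabla v|^2\,dx + \Lambda\bigl|\{v>0\}\bigr| + C\rho^d,$$
the error arising from the renormalization $\int v^2=1$ via the same Lagrange-multiplier mechanism as in \cref{l:one-phase_lipschitz_universal}. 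After rescaling, $u_n^+$ satisfies the analogous inequality on $B_R$ with vanishing error $O(r_n^2)$. Testing it against the competitor equal to $u_0^+$ on $B_{R-\eta}$, to $u_n^+$ outside $B_R$, and linearly interpolated in the intermediate annulus, uniform convergence drives the right-hand side to $\int_{B_R}|\nabla u_0^+|^2\,dx + o_\eta(1)$; letting $\eta\to0$ yields (i) for $u^+$, and the symmetric argument handles $u^-$ and then $u_n=u_n^+-u_n^-$.

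For (ii), uniform convergence gives $\ind_{\Omega_n^+}\to 1$ pointwise on $\Omega_0^+$. On the open complement of $\overline{\Omega_0^+}$, $u_0$ vanishes in some ball $B_s(x)$, so $u_n\to 0$ uniformly there; the nondegeneracy of \cref{l:nondegeneracy}, applied to the first eigenfunctions $u_n^\pm$ through the inward minimality inherited from $\Omega_u^\pm$, then forces $u_n\equiv 0$ on $B_{s/2}(x)$ for $n$ large, so $\ind_{\Omega_n^+}(x)\to 0$. The same nondegeneracy together with the Lipschitz bound ensures $|\partial\Omega_0^+|=0$, so the pointwise a.e.~convergence upgrades to $L^1(B_R)$ by dominated convergence; the argument for $\Omega_n^-$ is identical. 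The main obstacle is the strong $H^1$ convergence: the eigenvalue normalization $\int v_\pm^2=1$ does not rescale homogeneously under blow-up, so the quasi-minimality has to be extracted with an error vanishing at scale $r_n$, which forces one to go through the partition characterization of \cref{p:equivalence2}\,(v) rather than treating $u^\pm$ directly as one-phase Alt--Caffarelli minimizers.
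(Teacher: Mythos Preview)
Your route to (i) is genuinely different from the paper's. You go through minimality: extract a local quasi-minimality of $u^+$ (and $u^-$) from \cref{p:equivalence2}, rescale, and compare $u_n^+$ against an interpolated competitor built from $u_0^+$. The paper instead uses only the PDE: since $u_n^\pm$ satisfies $\Delta u_n^\pm+r_n^2\lambda_1(\Omega^\pm)u_n^\pm=\mu_n^\pm$ for nonnegative Radon measures $\mu_n^\pm$ whose mass on $B_R$ is controlled by the Lipschitz constant, one tests the difference of equations against $\varphi(u_n^\pm-u_0^\pm)$ and the uniform convergence kills every term. This requires no minimality and bypasses the admissibility constraint $\{v>0\}\cap\Omega_n^-=\emptyset$ entirely; it is shorter and more robust.

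Your approach can be made to work, but two steps need more care than you indicate. First, the penalty constant in the quasi-minimality is not forced to be $\Lambda$: the passage from the measure-constrained minimality of \cref{p:equivalence2}\,(v) to a penalized inequality goes through the compensation trick (as in Case~2d of the proof of \cref{l:lipschitz}), and produces \emph{some} positive Lagrange multiplier---enough for your purposes, but it is not the same computation as in \cref{l:one-phase_lipschitz_universal}. Second, your interpolated competitor can violate the constraint $\{v>0\}\cap\Omega_n^-=\emptyset$: near $\partial\Omega_0^+\cap B_{R-\eta}$ there may be points where $u_0^+$ is small positive while $u_n$ is slightly negative. Replacing $u_0^+$ by $(u_0^+-\delta)_+$ in the interior and sending $\delta\to0$ after $n\to\infty$ fixes this.

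For (ii) your sketch is close to the paper's, but you repeatedly write $u_0$ where you mean $u_0^+$: on the open complement of $\overline{\Omega_0^+}$ the function $u_0$ need not vanish (it can be strictly negative), only $u_0^+$ does; and nondegeneracy should be applied to $u_n^+$, which is what converges uniformly to $0$ there. The claim $|\partial\Omega_0^+|=0$ also needs an exterior density estimate at one-phase points of $\partial\Omega_0^+$; the paper gets this by observing that, away from $\Omega_0^-$, $u_0^+$ is a local minimizer of the one-phase Alt--Caffarelli functional.
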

\begin{proof}
	We first prove {\rm (i)}. We will proceed as in \cite[Step 5 of the proof of Theorem~3.1]{rtt}. We notice that $u_n$ is a weak (in $H^1(\R^d)$) solution of the equation
	\begin{equation}\label{e:roma1}
	\Delta u_n^\pm+r_n\lambda_1(\Omega^\pm)u_n^\pm=\mu_n^\pm\quad\text{in}\quad \R^d,
	\end{equation}
	for certain positive Radon measures $\mu_n^+$ and $\mu_n^-$.
	On the other hand $u_0^+$ and $u_0^-$ are nonnegative and harmonic on $\{u_0>0\}$ and $\{u_0<0\}$. Thus, there are positive Radon measures $\mu^+$ and $\mu^-$ such that 
			\begin{equation}\label{e:roma2}
			\Delta u_0^\pm=\mu^\pm\quad\text{in}\quad \R^d.
				\end{equation}
Let now $R>0$ be fixed. Since $u_n$ and $u_0$ are uniformly Lipschitz continuous in $B_R$, there is a constant $C_R>0$, depending only on $R$ such that 
$$\mu^\pm_n(B_R)+\mu^\pm(B_R)\leq C_R\qquad\text{for every}\qquad n\ge 0.$$
Let now $\varphi\in  C^\infty_0(\R^d)$, be a test function such that  
$$0\le \varphi\le 1\quad\text{in}\quad \R^d\ ,\quad \varphi=1\quad\text{in}\quad B_R\ ,\quad\text{and}\quad \varphi=0\quad\text{in}\quad \R^d\setminus B_{2R}.$$
	We test the difference of the two equations \eqref{e:roma1} and \eqref{e:roma2} with $\varphi(u_n^\pm-u^\pm)$
	\[
	\int_{\R^d}\nabla (u^\pm_n-u_0^\pm)\cdot\nabla [\varphi(u^\pm_n-u_0^\pm)]\,dx=\int_{\R^d}\varphi(u^\pm_n-u_0^\pm)\,d(\mu^\pm-\mu^\pm_n)+r_n\lambda_1(\Omega^\pm)\int_{\R^d}	\varphi\,u^\pm_n\,(u_n^\pm-u^\pm_0)\,dx.
	\]
	We now observe that, first of all, by definition of $\varphi$
	\begin{align*}
	\int_{B_R}|\nabla (u^\pm_n-u_0^\pm)|^2\,dx&\le \int_{B_{2R}}\varphi|\nabla (u^\pm_n-u_0^\pm)|^2\,dx\\
	&\le \int_{\R^d}\nabla (u^\pm_n-u_0^\pm)\cdot\nabla [\varphi(u^\pm_n-u_0^\pm)]\,dx-\int_{B_{2R}}(u^\pm_n-u^\pm_0)\nabla (u^\pm_n-u^\pm_0)\cdot \nabla \varphi\,dx.
	\end{align*}
	It is easy to check that, thanks to the weak $H^1_{loc}$ convergence and the uniform convergence  
\[
\lim_{n\to\infty}\|u^\pm_n-u^\pm\|_{L^\infty(B_{2R})}= 0,
\]
therefore we get that the last term in the right-hand side converges to zero as $n\to\infty$. Moreover, we have
$$\lim_{n\to\infty}\lambda_1(\Omega^\pm)\int_{\R^d}\varphi\, u^\pm_n(u_n^\pm-u^\pm_0)\,dx=0.$$	
	Finally, using again the local uniform convergence, we get 
	\begin{align*}\left|\int_{\R^d}\varphi(u^\pm_n-u_0^\pm)\,d(\mu^\pm_n-\mu^\pm)\right|&\leq \Big(\mu_n^\pm(B_{2R})+\mu_0^\pm(B_{2R})\Big)\|u^\pm_n-u_0^\pm\|_{L^\infty(B_{2R})}\\
&\leq C_{2R}\|u^\pm_n-u_0^\pm\|_{L^\infty(B_{2R})}	\rightarrow 0,	\end{align*}
which finally implies that $u^\pm_n$ strongly converges to $u_0^\pm$ in $H^1(B_R)$.
\smallskip

We now prove {\rm (ii)}. We will show that $\ind_{\Omega_n^+}$ converges pointwise almost-everywhere to $\ind_{\Omega_0^+}$. We first consider the case when $x_0\in\R^d$ is a point of Lebesgue density one for $\Omega_0^+$. If $x_0\in \Omega_0^+$, then $u_0(x_0)>0$ and by the uniform convergence of $u_{n}$ to $u_0$, we get that $u_n(x_0)>0$ for $n$ large enough. This gives that 
$$\ind_{\Omega_0^+}(x_0)=1=\lim_{n\to\infty}\ind_{\Omega_n^+}(x_0).$$
We will next show that $x_0$ cannot be on the boundary of $\Omega_0^+$. Let $\rho>0$ be fixed and small. If there was a sequence of points $x_n$ converging to $x_0$ such that $u_n(x_n)<0$, then by the nondegeneracy of $u_n^-$ we have that $\|u_n^-\|_{L^\infty(B_{\rho}(x_n))}>\rho\eta$, which passing to the limit as $n\to 0$ implies that $\|u_0^-\|_{L^\infty(B_{2\rho}(x_0))}>\rho\eta$. Thus, the $L$-Lipschitz continuity of $u_0^-$ implies that in $B_{3\rho}(x_0)$ there is a ball of radius $\rho\eta/L$, where $u_0^-$ is strictly positive (and so $u_0^+$ is zero). Since $\rho$ is arbitrary, we obtain a contradiction with the fact that $x_0$ is of density $1$ for $\Omega_0^+$. This means that there is a ball $B_{r_0}(x_0)$ such that $\Omega_n^-\cap B_{r_0}(x_0)=\emptyset$, for every $n$ large enough. In particular, in $B_{r_0}(x_0)$ the function $u_0^+$ is a blow-up limit of eigenfunctions on optimal sets for the first eigenvalue $\lambda_1$. Thus, by \cite{rtv}, $u_0$ is a local minimizer of the one-phase Alt-Caffarelli functional and so, it satisfies an exterior density estimate, that is, there are no points of density one on the boundary of $\Omega_0^+$. This concludes the proof in the case when $x_0$ has density one. 

Let now $x_0$ be a point of Lebesgue density $0$ for $\Omega_0^+$. By the continuity of $u_0^+$ we have that $u_0^+(x_0)=0$ and $\ind_{\Omega_u^+}(x_0)=0$. Suppose for the sake of contradiction that (for some subsequence that we still denote by $u_n^+$) $u_n^+(x_0)>0$ for every $n>0$. But then the nondegeneracy of $u_n$ at $x_0$ implies that there is a constant $\eta>0$ such that
$$\|u_n^+\|_{L^\infty(B_\rho(x_0))}>\eta\rho,$$
for every $\rho>0$ and every $n\ge0$. As a consequence, the uniform $L$-Lipschitz continuity of $u_n$ implies that there are points $x_n\in B _\rho(x_0)$ such that 
$$u_n^+\ge \frac\eta2\quad\text{in}\quad B_{\rho\eta/2L}(x_n).$$
Notice that, up to extracting a subsequence $x_n$ converges to some point $x_\infty\in \overline B_\rho(x_0)$. The uniform convergence of $u_n^+$ now implies that
$$u_0^+\ge \frac\eta2\quad\text{in}\quad B_{\rho\eta/2L}(x_\infty).$$
Since $\rho$ is arbitrary this contradicts the initial assumption that $x_0$ has Lebesgue density $0$. Thus, we get that for $n$ large enough 
$u_n^+(x_0)=0$, which implies that 
$$\ind_{\Omega_0^+}(x_0)=0=\lim_{n\to\infty}\ind_{\Omega_n^+}(x_0),$$
and this concludes the proof.
\end{proof}

As an immediate corollary of \cref{le:convH1bu} and \cref{l:limit_optimality}, we obtain the following stationarity condition for the blow-up limits of $u$.
\begin{lemma}
Let $u$ be a Lipschitz continuous solution of ~\eqref{e:fb_infty} in the open set $D\subset\R^d$ and let $x_0\in\partial\Omega_u\cap D$. Then, for every  blow-up limit $u_0:\R^d\to\R$ of $u$ at $x_0$, we have the first variation formula
\begin{align}
0&=a_+\int_{\R^d}|\nabla u_0^+|^2\mathrm{div}\,\xi-2\,\nabla u_0^+\, D\xi\,(\nabla u_0^+)^t\,dx\notag\\
&\qquad+a_-\int_{\R^d}|\nabla u_0^-|^2\mathrm{div}\,\xi-2\,\nabla u_0^-\, D\xi\,(\nabla u_0^-)^t\,dx+\int_{\Omega_{u_0}}\text{\rm div}\,\xi\,dx\,,\label{e:first_variation_u0}
\end{align}
for every smooth vector field $\xi\in C^\infty_c(\R^d;\R^d)$, where $a_+$ and $a_-$ are the nonnegative constants from \cref{l:limit_optimality}.
\end{lemma}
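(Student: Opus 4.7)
The plan is to derive \eqref{e:first_variation_u0} directly from \cref{l:limit_optimality} applied to a family of test vector fields concentrated near $x_0$, and then to pass to the limit using the strong convergences provided by \cref{le:convH1bu}.

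Fix $\xi\in C^\infty_c(\R^d;\R^d)$ with $\supp\xi\subset B_R$ and, for each $n$, set
\[
\xi_n(x):=\xi\!\left(\tfrac{x-x_0}{r_n}\right),
\]
which belongs to $C^\infty_c(D;\R^d)$ for $n$ large, since $\supp\xi_n\subset B_{r_nR}(x_0)\subset D$. By \cref{l:limit_optimality},
\[
a_+\,\delta\mathcal R(u_+)[\xi_n]+a_-\,\delta\mathcal R(u_-)[\xi_n]+\delta\text{\rm Vol}(u_+)[\xi_n]+\delta\text{\rm Vol}(u_-)[\xi_n]=0.
\]
Using $\text{\rm div}\,\xi_n(x)=r_n^{-1}(\text{\rm div}\,\xi)\big((x-x_0)/r_n\big)$ and $D\xi_n(x)=r_n^{-1}(D\xi)\big((x-x_0)/r_n\big)$, the change of variables $x=x_0+r_ny$ together with the blow-up identities $u(x_0+r_ny)=r_nu_n(y)$ and $\nabla u(x_0+r_ny)=\nabla u_n(y)$ transforms the identity above, after dividing by $r_n^{d-1}$, into
\begin{align*}
0 &= a_+\!\int_{\R^d}\!\Big(|\nabla u_n^+|^2\,\text{\rm div}\,\xi-2\nabla u_n^+ D\xi(\nabla u_n^+)^t-r_n^2\lambda_1(\Omega_u^+)(u_n^+)^2\,\text{\rm div}\,\xi\Big)\,dy\\
&\quad + a_-\!\int_{\R^d}\!\Big(|\nabla u_n^-|^2\,\text{\rm div}\,\xi-2\nabla u_n^- D\xi(\nabla u_n^-)^t-r_n^2\lambda_1(\Omega_u^-)(u_n^-)^2\,\text{\rm div}\,\xi\Big)\,dy\\
&\quad +\int_{\Omega_n^+}\text{\rm div}\,\xi\,dy+\int_{\Omega_n^-}\text{\rm div}\,\xi\,dy,
\end{align*}
where $\Omega_n^\pm=\{\pm u_n>0\}$.

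It then remains to let $n\to\infty$. The terms carrying the explicit $r_n^2$ vanish since $u_n^\pm$ is uniformly bounded in $L^\infty(B_R)$ by the global Lipschitz bound on $u$. By the strong $H^1(B_R)$ convergence $u_n^\pm\to u_0^\pm$ of \cref{le:convH1bu}(i), both quadratic gradient expressions converge to the corresponding ones for $u_0^\pm$. By the $L^1(B_R)$ convergence $\ind_{\Omega_n^\pm}\to\ind_{\Omega_{u_0}^\pm}$ of \cref{le:convH1bu}(ii), the volume terms converge to $\int_{\Omega_{u_0}^\pm}\text{\rm div}\,\xi$. Collecting the limits yields precisely \eqref{e:first_variation_u0}.

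The only step requiring real content is the passage to the limit of the quadratic gradient terms: weak $H^1$-convergence alone would not suffice to pass such quadratic forms to the limit, but this is exactly what part (i) of \cref{le:convH1bu} guarantees. Once that strong convergence is in hand, the argument reduces to the routine rescaling and limit passage sketched above.
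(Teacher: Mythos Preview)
Your argument is correct and follows exactly the approach the paper indicates: the paper's proof is the single sentence ``As an immediate corollary of \cref{le:convH1bu} and \cref{l:limit_optimality}'', and what you have written is precisely the rescaling-and-limit computation that unpacks that corollary. The only thing worth noting is that your use of $\lambda_1(\Omega_u^\pm)$ in the rescaled identity relies on $\int_D u_\pm^2=1$ (so that $\mathcal R(u_\pm)=\lambda_1(\Omega_u^\pm)$ and no normalizing denominator appears in \eqref{e:first_variation_J}); this is indeed part of the setup of \eqref{e:fb_infty}.
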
	

\subsection{Homogeneity of the blow-up limits}\label{sect:weiss}
For every $u\in H^1(B_1)$, we consider the following Weiss-type boundary adjusted energy  
\begin{equation}\label{eq:weiss}
\begin{split}
W(u)&=\left[a^+\int_{B_1}|\nabla u_+|^2\,dx+a^-\int_{B_1}|\nabla u_-|^2\,dx\right]\\
&\qquad -\left[a^+\int_{\partial B_1}u_+^2\,d\mathcal H^{d-1}+a^-\int_{\partial B_1}u_-^2\,d\mathcal H^{d-1}\right]+|\Omega_u^+\cup\Omega_u^-\cap B_1|.
\end{split}
\end{equation}
We will prove a monotonicity formula for $W$, which we will use to show that the blow-up limits are $1$-homogeneous functions. The argument is standard (see \cite{weiss}) and is based on the first variation formula \eqref{e:first_variation_full} and a computation of the derivative of $W(u_{r,x_0})$ in $r$. We sketch the proof and we refer to \cite{rtv} for the detailed computations.
\begin{lemma}[Homogeneity of the blow-up limits]\label{l:weiss}
Let $u$ be a Lipschitz continuous solution of ~\eqref{e:fb_infty} in the open set $D\subset\R^d$ and let $x_0\in\partial\Omega_u\cap D$. Then, there is a constant $C>0$ such that
\begin{equation}\label{eq:weissmonot}
\frac{\partial}{\partial r}W(u_{x_0,r})\geq \frac{2}{r}\left[a^+\int_{\partial B_1}|x\cdot \nabla u^+_{x_0,r}-u^+_{x_0,r}|^2\,d\mathcal H^{d-1}+a^-\int_{\partial B_1}|x\cdot \nabla u^-_{x_0,r}-u^-_{x_0,r}|^2\,d\mathcal H^{d-1}\right]-C,
\end{equation}
where $a_+$ and $a_-$ are the nonnegative constants from \cref{l:limit_optimality} and 
$$u_{x_0,r}(x):=\frac1ru(x_0+rx).$$
As a consequence, if $u_0:\R^d\to\R$ is a blow-up limit of $u$ at $x_0$, then
\begin{enumerate}[\rm (i)]
\item if $a_+>0$, then $u_{0}^+$ is $1$-homogeneous;
\item if $a_->0$, then $u_{0}^-$ is $1$-homogeneous.
\end{enumerate}
\end{lemma}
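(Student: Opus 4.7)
This is the classical Weiss-type monotonicity formula, adapted to the eigenfunction setting (compare~\cite{weiss,rtv,despve}). The key idea is that the stationarity condition from \cref{l:limit_optimality} cancels the bulk contributions to $\frac{d}{dr}W(u_{x_0,r})$ up to a bounded error coming from the lower-order terms $\lambda_1(\Omega_u^\pm)u_\pm^2$ inside $\delta\mathcal{R}(u_\pm)[\xi]$.

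After the change of variables $y = x_0 + rx$, I would rewrite
\[
W(u_{x_0,r}) = r^{-d}\sum_{\pm} a_\pm\!\int_{B_r(x_0)}\!\!|\nabla u_\pm|^2\,dy - r^{-d-1}\sum_{\pm} a_\pm\!\int_{\partial B_r(x_0)}\!\!u_\pm^2\,d\mathcal H^{d-1} + r^{-d}|\Omega_u\cap B_r(x_0)|,
\]
and differentiate in $r$. Separately, I would apply~\eqref{e:first_variation_full} with the radial vector field $\xi(x) = \phi_\varepsilon(|x-x_0|)(x-x_0)$, where $\phi_\varepsilon$ smoothly approximates $\mathbf{1}_{[0,r]}$; letting $\varepsilon\to 0$ yields, for a.e.~$r$, a Pohozaev-type identity of the form
\[
\begin{split}
0 = &\sum_\pm a_\pm\Bigl[(d-2)\!\int_{B_r(x_0)}\!\!|\nabla u_\pm|^2 + r\!\int_{\partial B_r(x_0)}\!\!|\nabla u_\pm|^2 - 2r\!\int_{\partial B_r(x_0)}\!\!|\partial_\nu u_\pm|^2\Bigr] \\
&+ d\,|\Omega_u\cap B_r(x_0)| + E(r),
\end{split}
\]
where $E(r)$ collects the contributions of the terms $-\lambda_1(\Omega_u^\pm)u_\pm^2\,\mathrm{div}\,\xi$ from $\delta\mathcal{R}$ and satisfies $|E(r)|\le Cr^d$ by the Lipschitz continuity of $u$ together with the universal $L^\infty$ bound recalled in \cref{oss:subharmonicity}. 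Plugging this identity into the derivative of $W(u_{x_0,r})$ computed above, and performing the classical algebraic manipulation (completing the square in $r\partial_\nu u_\pm - u_\pm$ on $\partial B_r(x_0)$) yields precisely~\eqref{eq:weissmonot}. The main technical obstacle is the careful bookkeeping in the first-variation step, and the verification that the eigenvalue terms assemble into the uniformly $O(r^d)$ error.

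For the homogeneity consequence, the inequality~\eqref{eq:weissmonot} implies that $r\mapsto W(u_{x_0,r})+Cr$ is non-decreasing, so the limit $\theta := \lim_{r\to 0^+}W(u_{x_0,r})$ exists. By the strong $H^1$ and $L^1$ convergence of blow-up sequences proved in \cref{le:convH1bu}, for every $\rho>0$ and every sequence $r_n\to 0$ with $u_{x_0,r_n}\to u_0$ one has $W((u_0)_\rho) = \lim_n W(u_{x_0,\rho r_n}) = \theta$, so $\rho\mapsto W((u_0)_\rho)$ is constant. Since $u_0$ satisfies the cleaner stationarity~\eqref{e:first_variation_u0} (the eigenvalue contributions having vanished in the limit), the same derivation applied to $u_0$ produces the \emph{exact} monotonicity
\[
\tfrac{d}{d\rho}W((u_0)_\rho) = \tfrac{2}{\rho}\sum_\pm a_\pm\!\int_{\partial B_1}\bigl|x\cdot\nabla (u_0)_\rho^\pm - (u_0)_\rho^\pm\bigr|^2\,d\mathcal H^{d-1},
\]
with no error term. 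Since the left-hand side vanishes, $x\cdot\nabla u_0^\pm = u_0^\pm$ in $L^2(\partial B_1)$ whenever $a_\pm > 0$, which is the Euler identity characterizing $1$-homogeneity of $u_0^\pm$.
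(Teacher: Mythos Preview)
Your proposal is correct and follows essentially the same route as the paper, which itself defers the detailed computations to \cite[Lemma~5.37]{rtv}: differentiate $W(u_{x_0,r})$, use the stationarity from \cref{l:limit_optimality} with a radial cutoff to obtain a Pohozaev identity, complete the square on the sphere, and absorb the $\lambda_1(\Omega_u^\pm)u_\pm^2$ terms into the $O(1)$ error; then deduce homogeneity of $u_0$ via \cref{le:convH1bu} and the error-free stationarity~\eqref{e:first_variation_u0}. One minor remark: the identity you should invoke for $u$ is the one from \cref{l:limit_optimality} (not~\eqref{e:first_variation_full}, which is the intermediate formula for $u_p$ and carries the fidelity term), though your opening sentence makes clear that this is what you intend.
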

\begin{proof}
The estimate \eqref{eq:weissmonot} follows directly from the first variation formula \eqref{e:first_variation_full}, just as in \cite[Lemma 5.37]{rtv}. Now, this implies that the function $r\mapsto W(u_{r,x_0})+Cr$ is non-decreasing and so the limit 
$$\Theta=\lim_{r\to0}W(u_{x_0,r})$$
exists. If $u_0$ is a blow-up limit of $u$ at $x_0$, $u_0$ is the locally uniform limit 
$$u_0=\lim_{n\to\infty}u_{r_n,x_0}\quad\text{for some sequence}\quad r_n\to0,$$
then, setting $(u_0)_\rho(x):=\ds\frac1\rho u(\rho x)$, we have 
$$W\big((u_0)_\rho\big)=\lim_{n\to\infty}W(u_{x_0,\rho r_n})=\Theta\quad\text{for every}\quad\rho>0.$$
On the other hand, we know that $u_0$ satisfies the optimality condition \eqref{e:first_variation_u0}. Thus, using again the computations from \cite[Lemma 5.37]{rtv}, we get that 
\begin{equation}\label{eq:weissmonot2}
\frac{\partial}{\partial r}W\big((u_0)_r\big)\geq \frac{2}{r}\left[a^+\int_{\partial B_1}|x\cdot \nabla (u_0)_r^+-(u_0)_r^+|^2\,d\mathcal H^{d-1}+a^-\int_{\partial B_1}|x\cdot \nabla (u_0)_r^--(u_0)_r^-|^2\,d\mathcal H^{d-1}\right].
\end{equation}
On the other hand, we know that $W\big((u_0)_r\big)$ is constant: $W\big((u_0)_r\big)=\Theta$ for every $r>0$. Thus the right-hand side of \eqref{eq:weissmonot2} is zero. This gives the claims (i) and (ii). 
\end{proof}

\subsection{Proof of \cref{t:main_intext}} We are now in position to prove \cref{t:main_intext}, which will imply \cref{c:main_cor} and conclude the proof of \cref{thm:main2} (and also the one of \cref{thm:main}). We proceed in several steps. \medskip

\noindent{\bf Step 1.\,\it The nondegeneracy of the coefficients \eqref{e:non-degeneracy-a} implies the classification of the blow-up limits \eqref{e:classification_blowup_limits} and \eqref{e:classification_blowup_limits2}.}\\
 Indeed, if $a_+>0$ and $a_->0$, then by \cref{l:weiss} any blow-up limit $u_0$ of $u$ at a two-phase point $x_0$ is one-homogeneous. Moreover, since $u_0$ is harmonic on $\Omega_0^+:=\{u_0>0\}$ and $\Omega_0^-:=\{u_0<0\}$, we have that it can be written in polar coordinates as 
$$u_0(r,\theta)=r\phi(\theta),$$
where the positive and the negative parts of $\phi:\mathbb S^{d-1}\to\R$ are non-zero (due to the nondegeneracy of $u_0^+$ and $u_0^-$) and are eigenfunctions on their supports, that is 
$$-\Delta_{\mathbb S^{d-1}}\phi_\pm=(d-1)\phi_\pm\quad\text{on}\quad\partial B_1\cap \Omega_0^\pm.$$
We now choose $\alpha$ and $\beta$ such that 
$$\int_{\mathbb S^{d-1}}\big(\alpha^2\phi_+^2+\beta^2\phi_-^2\big)\,d\theta=1\qquad\text{and}\qquad \int_{\mathbb S^{d-1}}\big(\alpha\phi_++\beta\phi_-\big)\,d\theta=0.$$
Moreover, integrating by parts on the sphere, we have that 
$$\int_{\mathbb S^{d-1}}\big|\nabla \big(\alpha\phi_++\beta\phi_-\big)\big|^2\,d\theta=d-1.$$
Now, by the variational formula for the eigenfunctions of the spherical Laplacian
$$d-1=\min\Big\{\int_{\mathbb S^{d-1}}|\nabla\psi|^2\,d\theta\ :\ \psi\in H^1(\mathbb S^{d-1}),\ \int_{\mathbb S^{d-1}}\psi\,d\theta=0,\  \int_{\mathbb S^{d-1}}\psi^2\,d\theta=1\Big\},$$
we get that the function 
$$\alpha\phi_++\beta\phi_-:\ \mathbb S^{d-1}\to\R,$$
is an eigenfunction of the Laplace-Beltrami operator corresponding to the eigenvalue $d-1$. Thus, $u_0^+$ and $u_0^-$ are linear functions, which gives \eqref{e:classification_blowup_limits}, that is, there are a unit vector $\nu\in\partial B_1$ and constants $\beta_+>0$ and $\beta_->0$ (notice that these constants are not a priori related to the auxiliary constants $\alpha$ and $\beta$ above) such that 
$$u_0(x):=\beta_+(x\cdot\nu)_+-\beta_-(x\cdot\nu)_-.$$
Now, in order to prove that $\beta_+$ and $\beta_-$ satisfy \eqref{e:classification_blowup_limits2}, we use the stationarity of $u_0$. Indeed, integrating by parts \eqref{e:first_variation_u0} we get that for every smooth vector field $\xi\in C^\infty_c(\R^d;\R^d)$, we have 
$$\int_{H_\nu}\Big(a_+|\nabla u_0^+|^2-a_-|\nabla u_0^-|^2\Big)\,\xi\cdot \nu\,d\HH^{d-1}=0,$$
where $H_\nu$ is the hyperplane $\ds \big\{x\in\R^d\ :\ x\cdot\nu=0\big\}$. Since the vector field $\xi$ is arbitrary, we get that
$$a_+\beta_+^2-a_-\beta_-^2=0.$$

\noindent{\bf Step 2.\,\it Strict positivity of the coefficients $a_+$ and $a_-$.}\\
Since $a_+\ge 0$, $a_-\ge 0$ and $a_++a_-=1$, we only need to exclude the case when one of the coefficients is zero and the other one is $1$. We argue by contradiction and we suppose that $a^-=0$ and $a^+=1$. We consider two cases. 

\noindent{\bf Step 2 - Case 1. \it There are no two-phase points in $D$.} In this case, we have that $\Omega_u^-$ and $\Omega_u^+$ lie at a positive distance in $D$. Now, if $a_-=0$, we have that 
$$\int_{\Omega_u^-}\text{\rm div}\,\xi\,dx=0\quad\text{for every}\quad \xi\in C^\infty_c\big(D\setminus \overline\Omega_u^+;\R^d\big).$$
Choosing vector fields of the form $(x-x_0)\phi_{\eps,r}(x-x_0)$, where the family of functions $\phi_{\eps,r}\in C^\infty_c(B_r)$ is such that 
$$\phi=1\quad\text{in}\quad B_{(1-\eps)r},\qquad \phi_{\eps,r}(x)=\frac{r-|x|}{\eps r}\quad\text{in}\quad B_r\setminus B_{(1-\eps)r},$$
and passing to the limit as $\eps\to0$, we get that (for almost-every $r>0$)
$$|B_r(x_0)\cap \Omega_u^-|=\frac{r}{d}\HH^{d-1}\big(\partial B_r(x_0)\cap \Omega_u^-\big).$$
Thus, the function
$$r\mapsto \frac{|B_r(x_0)\cap \Omega_u^-|}{|B_r|}$$
is constant for every $x_0\in D\setminus \overline\Omega_u^+$, which is impossible in the neighborhood of any one-phase point $x_0\in \partial\Omega_u^-\cap D$.\medskip

\noindent{\bf Step 2 - Case 2. \it There is at least one two-phase point $x_0\in\partial\Omega_u^+\cap\partial\Omega_u^-\cap D$.} Let $r>0$ be small enough such that $B_{r}(x_0)\subset D$ and let $y_0$ be any point such that 
$$y_0\in B_{\sfrac{r}2}(x_0)\qquad\text{and}\qquad y_0\in\Omega_u^-.$$
Let $z_0$ be the projection of $y_0$ at $\partial \Omega_u^+$. Notice that by construction, we have that $z_0\in D$. Let now $u_0$ be a blow-up limit of $u$ at $z_0$. Since $B_{\sfrac{r}2}(y_0)\cap \Omega_u^+=\emptyset$, we know that $u_0^+$ vanishes in the half space 
$$H_\nu^+=\{x\in\R^d\ :\ x\cdot\nu>0\},\qquad\text{where}\qquad \nu=\frac{z_0-y_0}{|z_0-y_0|}.$$
On the other hand, $u_0^+$ is harmonic in $\{u_0>0\}$ and, by \cref{l:weiss}, $1$-homogeneous. But then $u_0^+$ should be a linear function:
$$u_0^+(x)=c(x\cdot \nu)_+\quad\text{for every}\quad x\in\R^d,$$
for some positive constant $c$. Conversely, for the negative part $u_0^-$, we know that $\Omega_u^-$ lies in the oppposite half-space
$$H_\nu^-=\{x\in\R^d\ :\ x\cdot\nu<0\},$$
and that 
$$\int_{\Omega_u^-}\text{\rm div}\,\xi\,dx=0\quad\text{for every}\quad \xi\in C^\infty_c\big(H_\nu^-;\R^d\big).$$
Now, reasoning as in {Step\,2\,-\,Case 1} and knowing that $u_0^-$ is not identically zero in $B_1$ (due to the nondegeneracy of $u_-$), we get that $\Omega_u^-=H_\nu^-$. But now the optimality condition \eqref{e:first_variation_u0} gives that 
$$0=\int_{\partial H_\nu^+}a_+|\nabla u_0^+|^2(\xi\cdot\nu) \,d\HH^{d-1}=c^2\int_{\partial H_\nu^+}\xi\cdot\nu \,d\HH^{d-1},$$
for every smooth vector field $\xi\in C^\infty_c(\R^d;\R^d)$, which is a contradiction. This concludes the proof of Step 2. \medskip

\noindent{\bf Step\,3.\,\it Local inwards minimality of $u_+$.} We suppose that at least one of the one-phase free boundaries $\partial\Omega_u^+\setminus \partial\Omega_u^-$ and $\partial\Omega_u^-\setminus \partial\Omega_u^+$ is non-empty in $D$. Without loss of generality, there exists some point 
$$y_0\in D\cap \partial\Omega_u^+\setminus \partial\Omega_u^-.$$
Then, there is some $r>0$ such that $B_r(y_0)\cap\Omega_u^-=\emptyset$ and we can assume that $\partial\Omega_u^+$ is smooth in $B_r(y_0)$. Let now $\xi$ be a smooth vector field in $B_r(y_0)$ and let 
$$u_t(x)=u(\Psi_t(x))\qquad\text{where}\qquad \Phi_t=(Id+t\xi)^{-1}.$$. 
We can choose $\xi$ in such a way that  
$$|\{u_t>0\}\cap B_r(y_0)|-|\{u>0\}\cap B_r(y_0)|=t+o(t),$$
and 
$$\int_{B_r(y_0)}|\nabla u_t|^2\,dx-\int_{B_r(y_0)}|\nabla u|^2\,dx=-\frac1{a_+}t+o(t),$$
Now, suppose that $\rho$ is small enough and that $v\in H^1(B_\rho)$ is such that 
$$u=v\quad\text{on}\quad \partial B_\rho,\qquad v\le u\quad\text{in}\quad B_\rho,$$
and consider the test function 
$$\tilde v=v\quad\text{in}\quad  B_\rho,\qquad \tilde v=u\quad\text{in}\quad  D\setminus\big(B_\rho\cup B_r(y_0)\big),\qquad \tilde v=u_t\quad\text{in}\quad  B_r(y_0),$$
where $t$ is such that 
$$\big|\{u_t>0\}\cap B_r(y_0)\big|+\big|\{v>0\}\cap B_\rho|=\big|\{u>0\}\cap B_\rho\big|+\big|\{u>0\}\cap B_r(y_0)\big|,$$
and in particular, $t=O(\rho^d)$.
Thus, the minimality of $u$ implies that 
$$\frac{\ds\lambda_1(\Omega_u^+)-\int_{B_\rho}\big(|\nabla u|^2-|\nabla v|^2\big)-\int_{B_r(y_0)}\big(|\nabla u|^2-|\nabla u_t|^2\big)}{\ds1-\int_{B_\rho}\big(u^2-v^2\big)-\int_{B_r(y_0)}\big(u^2-u_t^2\big)}\ge\lambda_1(\Omega_u^+),$$
and so 
\begin{align*}
\int_{B_\rho}|\nabla v|^2\,dx&\ge \int_{B_\rho}|\nabla u|^2\,dx+ \int_{B_r(y_0)}\big(|\nabla u|^2-|\nabla u_t|^2\big)\,dx+o(\rho^d)\\
&\ge \int_{B_\rho}|\nabla u|^2\,dx+ \frac1{a_+}\Big(\big|\{u>0\}\cap B_\rho|-\big|\{v>0\}\cap B_\rho|\Big)+o(t)+o(\rho^d)\\
&= \int_{B_\rho}|\nabla u|^2\,dx+ \frac1{a_+}\Big(\big|\{u>0\}\cap B_\rho|-\big|\{v>0\}\cap B_\rho|\Big)+o(\rho^d).
\end{align*}
Thus, the rescaling $u_\rho(x)=\frac1\rho u(\rho x)$ satisfies 
\begin{align}\label{e:minimality_urho}
\int_{B_1}|\nabla v|^2\,dx\ge  \int_{B_1}|\nabla u_\rho|^2\,dx+ \frac1{a_+}\Big(\big|\{u_\rho>0\}\cap B_1\big|-\big|\{v>0\}\cap B_1\big|\Big)+o(1),
\end{align}
for all test functions $v$ such that 
$$v=u_\rho^+\quad\text{on}\quad \partial B_1\,,\quad v\le u_\rho^+\quad\text{in}\quad B_1\,.$$
\medskip

\noindent{\bf Step\,4.\,\it  Local inwards minimality of $u_0^+$.} Let $v:B_1\to\R$ be such that 
$$v=u_0^+\quad\text{on}\quad \partial B_1\,,\quad v\le u_0^+\quad\text{in}\quad B_1\,.$$
Fix $\eps>0$ and consider the function 
$$v_\eps: B_1\to\R,\quad v_\eps(x)=v(x)+\eps\big(|x|-(1-\eps)\big)_+$$
Thus, if $u_n:=u_{r_n}$ is a blow-up sequence that converges uniformly to $u_0^+$, then $v_\eps\ge u_n^+$ on $\partial B_1$, for every $n$. Thus, we can use $v_\eps\wedge u_n^+$ to test the minimality of $u_n^+$ in \eqref{e:minimality_urho}, thus obtaining that 
\begin{align}\label{e:minimality_un}
\int_{B_1}|\nabla (v_\eps\wedge u_n^+)|^2\,dx\ge  \int_{B_1}|\nabla u_n^+|^2\,dx+ \frac1{a_+}\Big(\big|\{u_n^+>0\}\cap B_1\big|-\big|\{v_\eps\wedge u_n^+>0\}\cap B_1\big|\Big)+o(1).
\end{align}
Now, using \cref{le:convH1bu} and passing to the limit as $n\to\infty$ gives
\begin{align*}
\int_{B_1}|\nabla (v_\eps\wedge u_0^+)|^2\,dx&\ge  \int_{B_1}|\nabla u_0^+|^2\,dx+ \frac1{a_+}\Big(\big|\{u_0>0\}\cap B_1\big|-\big|\{v_\eps\wedge u_0^+>0\}\cap B_1\big|\Big)\\
&\ge  \int_{B_1}|\nabla u_0^+|^2\,dx+ \frac1{a_+}\Big(\big|\{u_0>0\}\cap B_1\big|-\big|\{v>0\}\cap B_1\big|\Big)-\frac1{a_+}|B_{1}\setminus B_{1-\eps}|,
\end{align*}
which, since $\eps$ was arbitrary gives that 
$$\int_{B_1}|\nabla v|^2\,dx+ \frac1{a_+}\big|\{v>0\}\cap B_1\big|\ge \int_{B_1}|\nabla u_0^+|^2\,dx+ \frac1{a_+}\big|\{u_0^+>0\}\cap B_1\big|,$$
and concludes the proof of Step 4.\medskip

\noindent{\bf Step\,5.\,\it  Non-degeneracy of $\beta_+$ and $\beta_-$.} Let now $x_0$ be a two-phase point in $D$ and $u_0$ be a blow-up limit of $u$ at $x_0$. We know that $u_0$ is of the form \eqref{e:classification_blowup_limits}, where $\beta_+$ and $\beta_-$ are such that $a_+\beta_+^2=a_-\beta_-^2$. 
Let $\xi$ be any smooth vector field entering the half-space $H_\nu^+$. Then, the inwards minimizing property of $u_0^+$ implies that 
$$\int_{\partial H_\nu^+}\big(a_+\beta_+^2-1\big)|\xi\cdot \nu|\,d\HH^{d-1}\ge 0.$$
Thus, $a_+\beta_+^2\ge 1$ and, as a consequence, $a_-\beta_-^2\ge 1.$ This concludes the proof of  \cref{t:main_intext}.\qed

\end{document}